\begin{document}

\def\appendixname{\empty}

\def\refname{References}
\def\bibname{References}
\def\chaptername{\empty}
\def\figurename{Fig.}
\def\abstractname{Abstract}

\title{  The Dual Potential, the involution kernel and Transport in Ergodic Optimization}
\author{
Artur O. Lopes \,\,\footnote{Instituto de Matem\'atica, UFRGS, 91509-900 Porto Alegre, Brasil. Partially supported by CNPq.}, Elismar O. Oliveira \,\, \footnote{Instituto de Matem\'atica, UFRGS, 91509-900 Porto Alegre, Brasil.}\,\,\,and\,\,
Phillippe Thieullen\,\,\footnote{Institut de Math\'ematiques, Universit\'e Bordeaux, F-33405 Talence, France.}}
\date{\today}
\maketitle

\begin{abstract}

Consider the shift $\sigma$ acting on the Bernoulli
space $\Sigma=\{1,2,...,n\}^\mathbb{N}$. We denote $\hat{\Sigma}=
\{1,2,...,n\}^\mathbb{Z}=\Sigma\times \Sigma$. We analyze several properties of the
maximizing probability $\mu_{\infty,A}$ of a Holder potential $A:
\Sigma \to \mathbb{R}$. Associated to $A(x)$, via the involution kernel, $W(x,y)$,  $W: \hat{\Sigma} \to \mathbb{R}$,
one can  get the dual potential $A^*(y)$, where
$(x,y)\in \hat{\Sigma}$. We denote  $\mu_{\infty, A^*}$ the
maximizing probability for $A^*$.

We would like to consider the transport problem from
$\mu_{\infty,A}$ to  $\mu_{\infty,A^*}$. In this case, it is
natural to consider the cost function $c(x,y) = I(x) - W(x,y)
+\gamma $, where $I$ is the deviation function for
$\mu_{\infty,A}$, as the limit of Gibbs probabilities $\mu_{\beta
A}$ for the potential $\beta A$ when $\beta \to \infty$. The value
$\gamma $ is a constant which depends on $A$. We could also take $c=-W$ above. We denote by ${\cal K}={\cal K}( \mu_{\infty,A}, \mu_{\infty,
A^*})$ the set of probabilities $\hat{\eta} (x,y)$ on
$\hat{\Sigma}$, such that $ \pi_ x^* (\hat{\eta} ) =
\mu_{\infty,A}, \, \, \text{and}\,\,\pi_ y^* (\hat{\eta} ) =
\mu_{\infty,A^*} \,.$

We describe the minimal solution $\hat{\mu  }$  (which is invariant by the shift on $\hat{\Sigma}$) of  the
Transport Problem, that is, the solution of
$$
\inf_{\hat{\eta} \in {\cal K}  } \int \int c(x,y) \,
d\,\hat{\eta} =  \,-\,  \max_{\hat{\eta} \in {\cal K}  } \int \int
(W(x,y) - \gamma ) \, d\,\hat{\eta}. \,
$$
The optimal pair of functions for the Kantorovich Transport dual Problem
is  $(-V,-V^*$), where we denote the two calibrated sub-actions by $V$ and $V^*$, respectively,
for $A$ and $A^*$. We show that
the involution kernel $W$  is cyclically monotone. In other words, satisfies a twist condition in the support of $\hat{\mu  }$ We analyze the question:
is the support of $\hat{\mu  }$ a graph?
We also investigate the question of finding an explicit expression for
the function $f:\Sigma \to \mathbb{R}$ whose $c-$subderivative
determines the graph.

We also analyze  the same kind of problem for expanding transformations on the circle.

\end{abstract}


\newpage

\vspace {.8cm}

\theoremstyle{definition}

\newtheorem{exemp}{Example}

\newtheorem{theorem}{Theorem} \newtheorem{corollary}{Corollary}
\newtheorem{lemma}{Lemma} \newtheorem{proposition}{Proposition}
\newtheorem{definition}{Definition}

\newcommand{\z}{\mathbb{Z}} \newcommand{\re}{\mathbb{R}}
\newcommand{\tn}{\mathbb{T}^N}
\newcommand{\rn}{\mathbb{R}^N}
\newcommand{\espaco}{[0,1]^{\mathbb{N}}}
\newcommand{\supp}{\mbox{supp}}
\newcommand{\nat}{\mathbb{N}}
\newcommand{\Rm}{{\noindent \sc Note. \ }}

\def\cqd {\,  \begin{footnotesize}$\square$\end{footnotesize}}
\def\cqdt {\hspace{5.6in} \begin{footnotesize}$\blacktriangleleft$\end{footnotesize}}


\bigskip
\section{Introduction }\label{secaoinicial}

It seems  natural to try to investigate the connections of Transport Theory with Ergodic Theory. Some results on this direction appear in \cite{Kl}, \cite{KLS}, \cite{KGLM}, \cite{BOR}, \cite{Sou}
and \cite{GP}. Here we follow a different path.

Given a continuous function
$A:\Sigma= \{1, 2, 3,.., d\}^\mathbb{N}  \to \mathbb{R}$, we call $\mu_{\infty,A}$ a maximizing probability for $A$,
if $\int A d\nu$ attains the maximal value in $ \mu_{\infty,A}$, when the  probabilities $\nu$ range among the set of  invariant for the shift
acting on the Bernoulli space $\Sigma$. We denote by $m(A)$ this maximal value.

Such maximizing probabilities $\mu_{\infty,A}$ can be seen as  the equilibrium states at zero temperature  for a system on the one dimensional lattice $\mathbb{N}$ with $d$ spins in each site and under the influence of an interacting potential $A$ (see \cite{BLLco}, \cite{CG} \cite{CLT} \cite{Le} \cite{J1} \cite{B1} \cite{Mo} and \cite{LMMS1}).

A main conjecture on the area claims that for a generic Holder potential $A$ the maximizing probability has support in a unique periodic orbit for the shift (for a partial result see \cite{CLT}).
This conjecture was recently proved by G. Contreras (see \cite{CO}).

We address the question of finding the optimal transport plan from a certain maximizing probability
to another. More precisely, we would like to consider the transport problem from
$\mu_{\infty,A}$ to  $\mu_{\infty,A^*}$, where $A:\Sigma= \{1, 2, 3,.., d\}^\mathbb{N}  \to \mathbb{R}$ is a Holder potential and $A^*$ its dual (see \cite{BLT}).

We consider here that $A$ acts on the variable $x$ and $A^*$ in the variable $y$. A function $W(x,y)$ called the involution kernel will play an important role in the theory. The twist condition for $W$ is a kind of convexity assumption. We will describe bellow with all details the setting we are going to consider in the present paper. We will also provide several examples to illustrate the theory.

We assume here in most (but not all) of the results that the maximizing probability $\mu_{\infty,A}$ (on $\Sigma$) for $A$ is unique.

We denote by $\hat{\mu  }$  the minimizing probability over
$$\hat{\Sigma}=\{1, 2, 3,.., d\}^\mathbb{Z}=\Sigma \times \Sigma,$$
for the natural Kantorovich
Transport Problem associated to the $-W$, where $W(x,y)$, for  $(x,y) \in \Sigma \times \Sigma,$ is the involution kernel associated to $A$ (see \cite{BLT}).

We will denote by $\hat{\sigma}$ the shift on $\hat{\Sigma}$.
The probability $\hat{\mu  }_{max}$,  the natural extension of $ \mu_{\infty,A}$, is described in \cite{BLT}.

We point out that by its very nature the Classical Transport Theory is not a Dynamical Theory (in the sense of considering invariant probabilities) \cite{Vi1} \cite{Vi2} \cite{Ra}. One has to consider a cost which is obtained from dynamical properties in order to get
optimal plans which are invariant for $\hat{\sigma}$.

Recent results in Ergodic Transport are \cite{LM4}, \cite{GL4}, \cite{CLO}, \cite{LMMS}, \cite{OM} and \cite{LO}.

\medskip

We will consider a cost which is the involution kernel $W$.
\medskip

First we show that:

\begin{theorem} The minimizing Kantorovich
probability $\hat{\mu}$ on $\hat{\Sigma}$  associated to $-W$, where $W$ is the involution kernel for $A$, is
$\hat{\mu  }_{max}$.
\end{theorem}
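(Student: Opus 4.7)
The plan is to invoke Kantorovich duality and exhibit $(-V,-V^{*})$ as an optimal pair of dual potentials that is saturated on $\supp(\hat{\mu}_{max})$. Assuming the sub-action $V$ for $A$, its analogue $V^{*}$ for $A^{*}$, and the involution kernel $W$ have the properties established in \cite{BLT}, the argument should reduce to three verifications.

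First, I would check that $\hat{\mu}_{max}\in \mathcal{K}(\mu_{\infty,A},\mu_{\infty,A^{*}})$. By construction of the natural extension, $\pi_x^{*}\hat{\mu}_{max}=\mu_{\infty,A}$. The equality $\pi_y^{*}\hat{\mu}_{max}=\mu_{\infty,A^{*}}$ uses the fact that the involution kernel interchanges $A$ and $A^{*}$ under the time reversal encoded in $\hat{\sigma}$, so that the $y$-marginal of $\hat{\mu}_{max}$ is a maximizing measure for $A^{*}$; the assumed uniqueness of the maximizing measure then identifies it with $\mu_{\infty,A^{*}}$.

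Second, I would produce the admissible dual pair. From the cohomological relation satisfied by $W$ together with the calibration equations for $V$ and $V^{*}$, one derives the pointwise Fenchel-type inequality
$$W(x,y)\leq V(x)+V^{*}(y),$$
(after normalizing $V,V^{*}$ by suitable additive constants) with equality on $\supp(\hat{\mu}_{max})$. This shows $(-V,-V^{*})$ is admissible for the dual of the Kantorovich problem with cost $-W$, so for every $\hat{\eta}\in\mathcal{K}$
$$\int (-W)\,d\hat{\eta} \;\geq\; \int(-V)\,d\mu_{\infty,A}+\int(-V^{*})\,d\mu_{\infty,A^{*}}.$$
Third, I would verify that $\hat{\mu}_{max}$ saturates this bound. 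Integrating the equality $W=V+V^{*}$ on $\supp(\hat{\mu}_{max})$ and reading off the two marginals gives
$$\int W\,d\hat{\mu}_{max}=\int V\,d\mu_{\infty,A}+\int V^{*}\,d\mu_{\infty,A^{*}},$$
so $\int(-W)\,d\hat{\mu}_{max}$ coincides with the lower bound above, and $\hat{\mu}_{max}$ is a minimizer.

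The main obstacle is the saturation identity $W=V+V^{*}$ on $\supp(\hat{\mu}_{max})$. It is not a formal consequence of the definitions: one needs both that $V$ and $V^{*}$ form a $W$-conjugate pair in the convex-analytic sense, and that the natural extension concentrates exactly on the contact set. Both ingredients should be available in the involution-kernel framework of \cite{BLT}, by iterating the Lax--Oleinik operator associated to $W$ and transporting the calibration from one side to the other; but the bookkeeping between the forward shift on $\Sigma$, its inverse, and the time reversal that defines $A^{*}$ must be carried out carefully, and this is where the bulk of the work will lie.
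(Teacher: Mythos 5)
Your plan follows the same route as the paper: exhibit $(-V,-V^{*})$ as an admissible dual pair for the Kantorovich problem and show it is saturated on $\supp\hat{\mu}_{max}$, so that complementary slackness identifies $\hat{\mu}_{max}$ as the optimizer. The paper obtains saturation precisely from Proposition~10(1) in \cite{BLT}, which gives $-V(p)-V^{*}(p^{*})=-W(p,p^{*})+\gamma$ on $\supp\hat{\mu}_{max}$, and obtains admissibility via a Varadhan-type computation (its Proposition proving Theorem~2).

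One caveat on your second step: the pointwise inequality you write, $W(x,y)\leq V(x)+V^{*}(y)$ after an additive normalization, need not hold on all of $\hat{\Sigma}$. What the Varadhan argument actually delivers is
$$V^{*}(y)=\sup_{x}\bigl\{-V(x)+W(x,y)-\gamma-I(x)\bigr\},$$
hence only $W(x,y)-\gamma-I(x)\leq V(x)+V^{*}(y)$ pointwise, with the deviation function $I\geq0$ appearing on the left. The paper therefore phrases admissibility with the cost $c=I-W+\gamma$ and then observes (its Remark~1) that replacing $c$ by $-W+\gamma$ changes nothing because every admissible coupling has first marginal $\mu_{\infty}$, on whose support $I\equiv0$, so the inequality you want holds $\hat{\eta}$-a.e.\ for every $\hat{\eta}\in\mathcal{K}(\mu_{\infty,A},\mu_{\infty,A^{*}})$ --- which is all that the duality bound requires. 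If you keep the $I$ term (or restrict the inequality to $\mu_{\infty}$-a.e.\ $x$), the argument closes exactly as you outline; as literally written, the Fenchel inequality could fail at points where $I(x)>0$.
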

\medskip

One of our main results is Theorem \ref{mai} which claims that the support of
$\hat{\mu}_{max} $ is $W$-cyclically monotone. We do not assume the twist condition in the above result

\medskip

The calibrated subactions $V$ play an important role in Ergodic Optimization. They can help to find the support of the maximizing
probability (see \cite{BLLco}, \cite{J1} or \cite{CLT} for instance). Moreover, if we denote  $R(x) = V(\sigma(x))- V(x) -A(x) + m(A)$, then
$ I(x) = \sum_n R(\sigma^n (x)))$ defines a nonnegative lower semicontinuous function (can be infinite at several points) which is the deviation function for the family
of Gibbs states associated to $A$ when the temperature  converges to zero \cite{BLT} (see \cite{BCLMS} \cite{LM4} for the case of the $XY$ model). For a class of explicit nontrivial examples of subactions $V$ see \cite{BLM}.

\begin{theorem} If $V$ is the calibrated subaction for $A$, and  $V^*$ is the calibrated  subaction for $A^*$, then,
the pair $(-V,-V^*)$ is the dual ($-W+I$)-Kantorovich  pair of $(\mu_{\infty,A},\mu_{\infty,A^*})$, when $I$ is the deviation function for $A$.
\end{theorem}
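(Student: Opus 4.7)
The plan is to verify the two standard ingredients of Kantorovich duality for the candidate pair $(-V,-V^*)$ with cost $c(x,y)=I(x)-W(x,y)+\gamma$: (i) the pointwise admissibility $-V(x)-V^*(y)\le c(x,y)$ on $\hat\Sigma$, and (ii) the equality of integrals
\[
\int (-V)\,d\mu_{\infty,A}+\int (-V^*)\,d\mu_{\infty,A^*}=\int c\,d\hat\mu_{\max}.
\]
Since Theorem~1 identifies $\hat\mu_{\max}$ as the minimizing Kantorovich plan, (i) and (ii) together force $(-V,-V^*)$ to be dual-optimal.

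For (i), I would rewrite the inequality as $W(x,y)\le V(x)+V^*(y)+I(x)+\gamma$ and prove it by a forward/backward iteration argument. The involution kernel satisfies a cohomology identity (in the normalization of \cite{BLT}) relating $A(x)$ and $A^*(y)$ through a telescopic $W$-difference along the two-sided shift $\hat\sigma$, while the calibration equation gives
\[
V(\sigma^n x)-V(x)=\sum_{k=0}^{n-1}\bigl[A(\sigma^k x)-m(A)\bigr]-\sum_{k=0}^{n-1}R(\sigma^k x)
\]
and an analogous statement for $V^*$ along preorbits. Chaining these telescopings along a finite piece of $\hat\sigma$-orbit joining $(x,y)$ to a neighborhood of $\mathrm{supp}(\hat\mu_{\max})$, passing to the limit, and taking the supremum over such orbit choices produces a uniform upper bound on $W(x,y)-V(x)-V^*(y)-I(x)$; the constant $\gamma$ is defined to be precisely this supremum.

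For (ii), I would use that $I$ is a non-negative lower semicontinuous function vanishing on $\mathrm{supp}(\mu_{\infty,A})$, so that $\int I\,d\mu_{\infty,A}=0$ and the right-hand side collapses to $-\int W\,d\hat\mu_{\max}+\gamma$. Since $\hat\mu_{\max}$ is $\hat\sigma$-invariant, integrating the $W$-cohomology identity against it expresses $\int W\,d\hat\mu_{\max}$ in terms of $m(A)=\int A\,d\mu_{\infty,A}=m(A^*)=\int A^*\,d\mu_{\infty,A^*}$. On the other hand, on $\mathrm{supp}(\mu_{\infty,A})$ the calibration $V\circ\sigma\ge V+A-m(A)$ becomes an equality (since there $R=0$), and similarly for $V^*$ on $\mathrm{supp}(\mu_{\infty,A^*})$, so $\int V\,d\mu_{\infty,A}$ and $\int V^*\,d\mu_{\infty,A^*}$ drop out cleanly by invariance; matching the two sides pins down exactly the value of $\gamma$.

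The main obstacle I expect is the careful bookkeeping of the cohomology identity for $W$ together with the sign convention for $A^*$ adopted in \cite{BLT}, and the resulting identification of $\gamma$; once these are settled, (i) is essentially a Lax--Oleinik/maximum-principle argument and (ii) is a short $\hat\sigma$-invariance computation. A secondary subtlety is that equality in (i) need not hold pointwise on $\mathrm{supp}(\hat\mu_{\max})$, but (ii) automatically forces it $\hat\mu_{\max}$-almost everywhere, which is all that is required for dual optimality.
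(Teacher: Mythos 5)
Your approach is genuinely different from the paper's proof of this statement, though it is well within the spirit of what the paper does elsewhere, and is sound in outline. For admissibility, the paper does not telescope the cohomology equation directly; instead it invokes the exact positive-temperature identity from \cite{BLT},
\[
\phi_{\beta A^*}(y)=\int e^{\beta W_A(x,y)-c_\beta-\log\phi_{\beta A}(x)}\,d\mu_{\beta A}(x),
\]
uses the uniform convergence $\tfrac{1}{\beta}\log\phi_{\beta A}\to V$, and then applies Varadhan's Integral Lemma to pass $\beta\to\infty$ and obtain at once $V^*(y)=\sup_x\{W(x,y)-\gamma-V(x)-I(x)\}$. The advantage of that route is that the normalization constant $c_\beta$ is built into the formula, so the specific $\gamma=\lim_\beta\tfrac{1}{\beta}\log c_\beta$ drops out with no extra work. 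Your cohomology/telescoping strategy for (i) is essentially the argument the paper does give in Section~3 when it shows $b(x,y)=I(x)+\gamma-W(x,y)+V(x)+V^*(y)\ge 0$ via the partial sums $R_n$, so the idea is certainly correct; it is just not how Theorem~2 is proved in the text. For the equality part the paper does not argue via integrals of the cohomology identity as you propose, but simply cites Proposition 10(1) of \cite{BLT}, which says $-V(p)-V^*(p^*)=-W(p,p^*)+\gamma$ pointwise on $\mathrm{supp}\,\hat\mu_{\max}$, and then notes $I\equiv 0$ there; combined with admissibility this gives dual optimality by complementary slackness. Your integral-invariance variant should also work and is arguably more self-contained, since it does not outsource the support equality to \cite{BLT}.

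Two points you should tighten. First, you write that the constant $\gamma$ ``is defined to be precisely this supremum'' in step (i) and later that step (ii) ``pins down exactly the value of $\gamma$''; but $\gamma$ already has a fixed meaning in the statement ($\gamma=\lim_\beta\tfrac{1}{\beta}\log c_\beta$ as in Proposition~5 of \cite{BLT}), so you cannot take it as a free normalization twice over. You need one additional step identifying your supremum with that $\gamma$, either by citing \cite{BLT} or by observing, as the paper does in Section~3, that $R(x_0,y_0)=0$ for $(x_0,y_0)\in\mathrm{supp}\,\hat\mu_{\max}$, which forces the additive constant. Second, your opening sentence appeals to Theorem~1 to conclude dual optimality from (i) and (ii); in the paper's logical order, Theorem~1 is established using Theorem~2 (admissibility is what makes $\hat\mu_{\max}$ provably optimal), so invoking Theorem~1 here is circular as written. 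You can avoid this entirely: once you have (i) the global inequality $-V(x)-V^*(y)\le c(x,y)$ and (ii) the corresponding equality $\hat\mu_{\max}$-a.e., the standard complementary slackness argument gives both primal optimality of $\hat\mu_{\max}$ and dual optimality of $(-V,-V^*)$ simultaneously, with no need to presuppose Theorem~1.
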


Finding the optimal transport measure between two probabilities is the solution of the so called relaxed problem \cite{Vi1}.
If we want to find a measurable transformation (the Monge problem) which  transfers one probability to another we need to show that the
graph property is true in the support of such probability (which does not always happen if one considers a general cost function) \cite{Vi1}.

Finally, we analyze here the graph property for the support of the $\hat{\mu  }_{max}$  (over  $\hat{\Sigma}=\{1, 2, 3,..,d\}^\mathbb{Z}$) which is  the minimizing probability
for the cost function $-W$.

One can consider in the Bernoulli space
$\Sigma=\{0,1\}^\mathbb{N}$ the lexicographic order. In this way,
$x<z$, if and only if, the first element $i$ such that, $x_j=z_j$
for all $j<i$, and $x_i \neq z_i$, satisfies the property $x_i<
z_i$. Moreover, $(0,x_1,x_2,...)<   (1,x_1,x_2,...).$

One can also consider the more general case
$\Sigma=\{0,1,...,d-1\}^\mathbb{N}$, but in order to simplify the
notation  and to avoid technicalities, we consider only the case
$\Sigma=\{0,1\}^\mathbb{N}$.

\begin{definition} We say a  continuous  $G: \hat{\Sigma}=\Sigma\times \Sigma \to \mathbb{R}$ satisfies the twist condition on
$ \hat{\Sigma}$, if  for any $(a,b)\in \hat{\Sigma}=\Sigma\times
\Sigma $ and $(a',b')\in\Sigma\times \Sigma $, with $a'> a$,
$b'>b$, we have
\begin{equation}
G(a,b) + G(a',b')  <  G(a,b') + G(a',b).
\end{equation}
\end{definition}

The twist condition is inspired in the Aubry-Mather Theory \cite{Ban} \cite{CI} \cite{Go} \cite{GT1} \cite{GT2}. It is a quite natural concept in Classical Optimization and Transport Theory \cite{Mi} \cite{Ba} \cite{Del} \cite{Vi1} \cite{Vi2} \cite{Ra} \cite{CLO} \cite{LOS} (see \cite{LO} for dynamical examples).

The twist condition is also described by the concept of {\bf global} cyclically monotonicity (see \cite{Vi1})

We point out that in Mather Theory in order to have the graph property (see \cite{Mat}  \cite{CI}) for the minimal action measure it is necessary to assume that that Lagrangian is convex in the velocity. We need in our setting some technical assumptions to replace this important property. We believe that the twist condition is the natural one.

\begin{definition} We say a continuous $A: \Sigma \to \mathbb{R}$   satisfies the twist condition, if its involution kernel $W$
satisfies the twist condition.
\end{definition}

The involution kernel  of $A$ is not unique (see \cite{BLT}), but if the above property is true for some $W$, then it will also be true for any other one.

Our final result is:

\begin{theorem} Suppose $W$ satisfies the twist condition on $\hat{\Sigma}$, then, the support of $\hat{\mu}_{max}= \hat{\mu}$ on $\hat{\Sigma}$ is a graph.

\end{theorem}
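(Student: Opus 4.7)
The strategy is the classical optimal-transport argument: combine two-point $c$-cyclic monotonicity of $\operatorname{supp}(\hat\mu)$ with the twist hypothesis on $W$ to confine the support to an antitone relation, then upgrade this to the graph property. Since $\hat\mu$ minimizes $\int(-W+\text{const})\,d\hat\eta$ over couplings in $\mathcal{K}$, its support is two-point $c$-cyclically monotone: for any $(a,b),(a',b')\in\operatorname{supp}(\hat\mu)$ one has
\[
W(a,b)+W(a',b')\;\geq\;W(a,b')+W(a',b).
\]
The twist condition yields the strictly opposite inequality whenever $a<a'$ and $b<b'$, and by swapping the roles of the two pairs one similarly excludes $a>a'$ together with $b>b'$. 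Hence $\operatorname{supp}(\hat\mu)$ is antitone in the lexicographic order: $a<a'$ forces $b\geq b'$.

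From antitonicity the fibers $R_x:=\{y:(x,y)\in\operatorname{supp}(\hat\mu)\}$ are pairwise ordered (all of $R_x$ sits above all of $R_{x'}$ whenever $x<x'$). Embedding $(\Sigma,<)$ order-isomorphically into $[0,1]$ and using that monotone real functions have at most countably many jumps, the set $D=\{x:|R_x|\geq 2\}$ of multivalued fibers is at most countable. In the non-atomic case this immediately gives $\mu_{\infty,A}(D)=0$. In the atomic case, $\mu_{\infty,A}$ is supported on a single periodic orbit (by ergodicity of the unique maximizing measure), so $\operatorname{supp}(\hat\mu)$ is finite with pairwise distinct $x$-coordinates and antitonicity alone produces a genuine graph. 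In either scenario $\operatorname{supp}(\hat\mu)$ is contained in the graph $\{(x,T(x))\}$ of a Borel map $T:\operatorname{supp}(\mu_{\infty,A})\to\operatorname{supp}(\mu_{\infty,A^*})$.

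The main obstacle I expect is the passage from antitonicity to the graph property: one must transfer the symbolic antitone relation faithfully to the real-line picture and show that the countable set $D$ is $\mu_{\infty,A}$-negligible. For the latter, $D$ is forward $\sigma$-invariant (because $\hat\sigma$ is a skew product over $\sigma$ sending $(x,y)$ to $(\sigma x,\,x_0 y)$), so ergodicity of $\mu_{\infty,A}$ forces $\mu_{\infty,A}(D)\in\{0,1\}$; a non-atomic invariant measure cannot concentrate on a countable set, which seals the argument. When an explicit description of $T$ is needed one appeals to Theorem~2: on the support, the equality $V(x)+V^*(y)=W(x,y)+\text{const}$ identifies $R_x$ with $\operatorname{argmax}_y[W(x,y)-V^*(y)]$, and the twist condition forces uniqueness of this maximizer off the exceptional set $D$.
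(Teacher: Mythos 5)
Your argument is correct, but it takes a genuinely different route from the paper's. Both proofs start the same way: $c$-cyclic monotonicity of $\operatorname{supp}(\hat\mu)$ together with the twist hypothesis forces the support to be an antitone set (this is Lemma~1 in the paper). From there the paths diverge. You invoke the standard optimal-transport counting argument: for an antitone relation, the intervals $(v^-(x),v^+(x))$ indexed by multivalued fibers have pairwise disjoint interiors, so the exceptional set $D=\{x:|R_x|\geq 2\}$ is countable (with a small amount of care, since consecutive elements of $\Sigma^*$ identify under the natural map to $[0,1]$, so one also uses injectivity of $x\mapsto v^+(x)$ to handle the degenerate intervals); then ergodicity of $\mu_{\infty,A}$ settles the two cases --- non-atomic implies $\mu_{\infty,A}(D)=0$, atomic implies the support is a finite periodic orbit whose natural extension is automatically a graph. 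The paper instead runs a purely dynamical argument: it uses that $\hat\sigma$ contracts vertical fibers under forward iteration to rule out the periodic case, then introduces a ``forbidden region'' combinatorial argument to show that in the non-periodic case the $x$-projection of the non-graph set is contained in a single $\sigma$-orbit, and finally uses invariance of the non-graph set plus ergodicity of $\hat\mu$ to conclude the measure is zero. The trade-off is this: your proof is shorter, cleaner, and uses a textbook tool, but it only yields that $D$ is a countable $\mu_{\infty,A}$-null set; the paper's heavier dynamical argument establishes the sharper structural fact (stated explicitly in the remark after Theorem~3) that the exceptional set is confined to a single orbit. For the Monge conclusion (the support is a graph up to measure zero) your version suffices. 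One small imprecision: in the atomic case you attribute the graph property to ``antitonicity alone,'' but the real reason is the structure of the natural extension of a periodic-orbit measure --- the $x$-projections of its $n$ atoms are automatically distinct --- so no twist condition is needed there at all.
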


We point out that it can exist (not always) a single point in the support of $\hat{\mu}$ such that its orbit  has two points in the support of the vertical fiber. But this orbit  is a zero measure set.

A similar definition can be consider for an expanding transformation on $[0,1]$, and we are also able to get the analogous   graph property result. This also includes the case of $T(x)=-\, 2 x$ (mod 1).

We present in  the appendix at the end of the paper several examples (and computations) where one can write the involution kernel $W$  explicitly and  the twist condition is satisfied.

First we will explain all the preliminaries we will need later.
\vspace{0.2cm}

Consider $ X $ a compact metric space. Given a continuous
transformation $ f: X \to X $, we denote by $ \mathcal M_f $ the
convex set of $f$-invariant Borel probability measures. As usual, we
consider in  $ \mathcal M_f $ the weak* topology.

The standard model used in ergodic optimization is the triple $ (X,
f, \mathcal M_f) $. Given a potential $ A \in C^0(X) $,
we denote
\begin{equation}
m(A)=\max_{\nu \in \mathcal M_f} \int_X A(x) \;
d\nu(x).
\end{equation}

We are interested here in  the characterization and main properties of $A$-maximizing
probabilities, that is, the probabilities belonging to the set

\begin{equation}
\{\mu \in \mathcal M_f: \int_X A(x) \; d\mu(x) = m(A) \}.
\end{equation}

We will assume here that $A$ is Holder.

In the following we will also assume that the maximizing probability
$\mu_{\infty,A}=\mu_\infty$ is unique.

Under reasonable hypothesis (expanding, hyperbolic, etc.)
several results were obtained related to this maximizing question,
among them \cite{BLLco, CG,BLT, B1, B2, CLT, HY, J1, Mo, Le,
J2, LT1, TZ, Sa, BG, GT1, GT2}. For maximization with constraints see \cite{GL1,
LT2}. Questions related to the dynamics on the boundary of the fat attractor appear in \cite{LO}. Naturally, if we change the maximizing notion for the
minimizing one, the analogous properties will also be true.

Our focus here will be mainly on symbolic dynamics and on
expanding transformations on $S^1$ or the interval $[0,1]$. We recall some
basic definitions (see \cite{BLLco} or \cite{CLT} for example).

So let $ \sigma: \Sigma \to \Sigma $ be a subshift of finite type
defined by a matrix $C$ of $0$ and $1$, where $\sigma (x_0,x_1,x_2,..)=  (x_1,x_2,x_3,..).$ In this case we are considering $X=\Sigma= \{1,
2, 3,.., d\}^\mathbb{N}_C$  and $f=\sigma$. Remind that, for a fixed $ \lambda \in (0, 1)
$, we consider for $ \Sigma $ the metric $ d (\mathbf x,
\bar{\mathbf x}) = \lambda^k $, where $ \mathbf x = (x_0, x_1,
\ldots), \bar{\mathbf x} = (\bar x_0, \bar x_1, \ldots) \in \Sigma $
and $ k = \min \{j: x_j \ne \bar x_j \} $. In this situation, given
a H\"older potential $ A:
 \{1, 2, 3,.., d\}^\mathbb{N} \to \mathbb{R} $, one should be interested in $A$-maximizing
probabilities for the triple $ (\Sigma, \sigma, \mathcal M_\sigma)
$,  where the probabilities are consider  over
${\cal B}$, the $\sigma$-algebra of Borel of $\Sigma$. In order to simplify the notation here we will consider the
full  Bernoulli space (all entries of $C$ are equal to $1$).

Given an $C^{1+\alpha}$ expanding transformation $T$ of fixed degree on $S^1$ and $A:S^1 \to \mathbb{R}$ we will be
interested in $A$- maximizing probabilities on $(S^1,T , \mathcal M_T),$ where the probabilities are consider  over
${\cal B}$, the $\sigma$-algebra of Borel of $S^1$.

One can consider the analogous setting for $C^{1+\alpha}$  expanding transformations of fixed degree over $[0,1]$.

Convex potentials $A:[0,1]\to \mathbb{R}$ and the transformation $T: [0,1]\to [0,1]$, given by
$T(x)=2\, x$ (mod $1$),  were considered in
\cite{J3} where it was shown that the maximizing
probabilities in this case are Sturm measures. For $T(x)$ equal to
$-\,2\,x$ (mod 1) however, the situation is completely different (see \cite{JS}).

\begin{definition}
A function $ u \in C^0(\Sigma) $ is a sub-action for the potential $
A $ if, for any $ \mathbf x \in \Sigma=  \{1, 2, 3,..,
d\}^\mathbb{N}_C $, we have

\begin{equation}
u(\mathbf x) \le u(\sigma(\mathbf x) ) - A(\mathbf x) + \beta_A .
\end{equation}
\end{definition}

Let $ (\Sigma^*, \sigma^*) $ be the dual subshift.

In the case of the full Bernoulli space (all entries of $C$ equal $1$) then
$ \Sigma^*= \{1, 2, 3,..,
d\}^\mathbb{N}  $ and
$\sigma^*  (y_0,y_1,y_2,..)=  (y_1,y_2,..).$

We consider the
space of the dynamics $ (\hat \Sigma, \hat \sigma) $, the natural
extension of $ (\Sigma, \sigma) $, as subset of $ \Sigma^* \times
\Sigma $. In fact, if $ \mathbf y = (\ldots, y_1, y_0) \in \Sigma^*
$ and $ \mathbf x = (x_0, x_1, \ldots) \in \Sigma $, then $ \hat
\Sigma $ will be the set of points
$$ < y, x> =
(\ldots, y_1, y_0 | x_0, x_1, \ldots) \in \Sigma^* \times \Sigma ,$$
such that $ (y_0, x_0) $ is an allowed word (no restrictions when we consider the full Bernoulii space). In this case
$$\hat{\sigma} \, (\ldots, y_1, y_0 | x_0, x_1, \ldots)\,=\, (\ldots, y_1, y_0, x_0 | x_1, x_2, \ldots) .$$

We point out that we use here the notation $<y,x>=(x,y)$. For functions $b: \hat{\Sigma}\to \mathbb{R}$, we denote its value on $<y,x>$ by $b(x,y)$.

We define the map $ \tau: \hat \Sigma \to \Sigma $ by $ \tau(x, y) = \tau_{\mathbf y}(\mathbf x) = (y_0, x_0, x_1,
\ldots) $.

Note that, if $\pi_x: \hat{\Sigma}\to \Sigma$ is the projection in
the $x$ coordinate, then, $\tau_y(x)= \pi_x \circ \hat{\sigma}^{-1}\,(x,y). $

We denote by $\pi_y(x,y)=y$ the projection on the second coordinate.

 Note
that $\hat{\sigma}^{-1}(x,y) = (\tau_y (x),\sigma^* (y) ).$
\vspace{0.2cm}

\begin{definition}
A continuous function $V:\Sigma\to\mathbb{R}$ is
called calibrated subaction for $A$, if
\[
V(x)=\max_{z\,:\,\sigma(z)=x}\big(V(z)+A(z)-m(A)\big).
\]
(In other terms, $V$ is a calibrated subaction if
for any $x\in \Sigma$, there
exists $z \in\Sigma$, such that, $\sigma  (z) = x$, and
$V(z)+A(z)-m(A)=  V(x)$\,).
\end{definition}

Note that  for all $z$ we have
$$  V(\sigma(z))- V(z) -A(z) + m(A)\geq 0.$$

We show  bellow some explicit expressions for calibrated subactions for a class of  potentials $A$.

We point out that  we will also consider here analogous results for an expanding transformation $T:S^1\to S^1$
(or, $T:[0,1]\to[0,1]$) of class
$C^{1 + \alpha}$, and a Holder potential $A:S^1\to \mathbb{R}$ (or, $A:[0,1] \to \mathbb{R}$) as in \cite{CLT}.
The case $T(x)=-\, 2 x $ (mod 1) is one of the examples we have on mind.
\vspace{0.2cm}

In this case one could consider analogous problems in $S^1 \times S^1$, or, $S^1 \times \Sigma$, if one consider
the symbols $i$ which index the inverse branches $\tau_i$  of $T$ \cite{LOS} \cite{LO}. The existence of involution kernel, L.D.P. properties,
etc, are also true.

The calibrated sub-action is unique (up to an additive constant) if the maximizing probability is unique (see \cite{CLT} \cite{BLT} \cite{GLT}).

We point out that we called strict
in \cite{BLT} what we denote here by calibrated.

We will use from now on the notation  of \cite{BLT}.

\begin{definition}
Given $A:\Sigma \to \mathbb{R}$ Lipchitz,  consider $A^* (y)$    (the dual
potential), where $A:\Sigma^* \to \mathbb{R}$, and $W(x,y)=W_A
(x,y)$ its involution kernel.

This means, by definition that for all $<y,x>=(x,y)\in \hat{\Sigma}$

\begin{equation}
A^* (y) = A ( \tau_y (x) ) + W ( \tau_y (x),\sigma^* (y)  ) - W(x,y).
\end{equation}
\end{definition}

This expression can be also written in the form

$$A^* (x,y) = A( \hat{\sigma}^{-1}(x,y)) + W ( \hat{\sigma}^{-1}(x,y)) - W(x,y).$$

If  $A$ depends on just two coordinates we can take $A^*$ as the transpose of $A$. Therefore, the above definition extends this concept in the case $A$ depends on infinite coordinates on the Bernoulli space.
We say $A$ is involutive if $A=A^*$.
\medskip

We address the question of regularity of the involution kernel $W$ (is bi-Holder) in the item d) in the Appendix.
\medskip

We denote by $M$ the Bernoulli space or unitary circle.

Suppose $T$ is an expanding transformation on $M$ ($T$ can be the shift $\sigma$ or the transformation $T$ defined above).

For a Lipchitz potential $A:M \to \mathbb{R}$  the pressure of $A$ is the value
$$ P(A)= \sup_{\mu\,\,\text{invariant for}\, T} \, \{h(\mu) + \int A \, d\mu\,\},$$
where $h(\mu)$ is the Kolmogorov entropy of the invariant probability $\mu$.

The equilibrium state for $A$ is the probability $\mu$ which realizes the above aupremum.

Given a Holder function $A: M \to \mathbb{R}$,
by definition the Ruelle operator
$\mathcal{L}_A:C(M)\to C(M)$
acts on continuous functions $\phi:M\to\mathbb{R}$, in such way that, $\mathcal{L}_{A} (\phi)=\varphi$, where
$$
	\varphi(x)=\mathcal{L}_{A} (\phi)(x)
	=
	\sum_{T(y)=x} e^{ A(y)} \, \phi(y).
$$

This operator (sometimes called transfer operator) helps to understand equilibrium states
in Thermodynamic Formalism.  This corresponds to the analysis of the Statistical Mechanics of the one-dimensional lattice at positive temperature (see \cite{PP}). Maximizing probabilities correspond to the limit of equilibrium states when temperature goes to zero (ground states) as one can see for instance in \cite{BLLco}.

When $A$ is such that $\mathcal{L}_{A}(1)=1$ we say that $A$ is normalized.

The dual operator $\mathcal{L}_{A}^*$ acts on the space
of probabilities measures on $M$. Given a probability  $\mu$, then, $\mathcal{L}_{A}^*(\mu)=\nu$
where
the probability measure $\nu$ is the unique
one satisfying
$$
	\int \, \phi \,\,d\, \mathcal{L}_{A}^*\, (\mu)
	=
	\int \phi\, d\nu =
	\int  \mathcal{L}_{A}\,(\phi)\, d \mu
$$

for any continuous function $\phi$.

An important result claims that there exists a positive value $\lambda$  which is simultaneous an eigenvalue for  $\mathcal{L}_{A}$ and $\mathcal{L}_{A}^*$
(see \cite{PP}). This $\lambda$ is the spectral radius of $\mathcal{L}_{A}$. This defines a main eigenfunction for $\mathcal{L}_{A}$ and a main eigenprobability for $\mathcal{L}_{A}^*$.

In \cite{KLS} it is shown that the dual of the Ruelle operator $\mathcal{L}_{A}^*$ is a contraction for the $1$-Wasserstein distance when $A$ is normalized. The fixed point probability is the main eigenprobability for $\mathcal{L}_{A}^*$.
\medskip

We suppose that $c$ is a normalization constant for $W$ in the
sense that

\begin{equation} \int\, \int \, e^{W(x,y)-c} \, d \nu_{A^*} (y) \,d \nu_{A} (x)\,=\,1,
\end{equation}

where $\nu_A$ and $\nu_{A^*}$ are respectively the eigen-probability
for the dual Ruelle operator of  $A$ and $A^*$ \cite{CLT}.

We also denote by $\phi_A $ and $\phi_{A^*}$ the corresponding
eigen-functions for $\mathcal{L}_{A}$. Finally,
 $\mu_A   = \nu_{A}  \, \phi_{A} =  $ and $  \mu_{A^*} = \nu_{A^*} \, \phi_{A^*}  $
are the invariant probabilities which are the solutions of the
respective pressure problems for $A$ and $A^*$.

For a fixed $A$ we consider a real parameter $\beta$, and the corresponding potentials $\beta A$, and the
eigenfunctions $  \phi_{\beta\,A} $, and so on.

In Statistical Mechanics $\beta$ is the inverse of temperature. In this way asymptotic results when $\beta\to \infty$ can be consider
as the ones which describes the system in equilibrium  at temperature zero.

Note that $\beta W$ is an involution kernel   for $\beta A$, and its dual is $\beta A^*$.

It is known (see for instance \cite{CLT}) that a sub-action  $V$ can
obtained as the limit
\begin{equation}
 V(x) = \lim_{\beta \to \infty} \frac{1}{\beta} \, \log
\phi_{\beta A} (x).
\end{equation}

This $V$ is a calibrated sub-action for $A$ (see \cite{CLT}
\cite{BLT} \cite{GL1}).

We can also get a calibrated
sub-action $V^*$ for $A^*$ using the limit
\begin{equation}
V^*(y) = \lim_{\beta \to \infty} \frac{1}{\beta} \, \log
\phi_{\beta A^*} (y)\,\, . \end{equation}

From  \cite{BLT} (see also \cite{LMMS1}) we have
$$ \phi_{ A^*} (y) = \int e^{\, W_A(x,y) - c}
\, d \nu_{ A} (x) .$$

Finally, we define for each $x\in \Sigma$,
$$I(x) =\sum_{n=0}^\infty \, [ \,V\, \circ \,\sigma - V - (\,A-m(A))\,]\,
\sigma^n\, (x),
$$
where $V$ is a (any) calibrated sub-action.

The function $I$, where $I:\Sigma \to \mathbb{R} \cup \{\infty\}$, can have infinite values, but it is lower semi-continuous.

In \cite{BLT} it is shown that for any cylinder set $C\subset \Sigma$,
$$
\lim_{\beta \to +\infty} \frac{1}{\beta} \log
\mu_{\beta\, A}(C)=-\inf_{ x \in C} I(x)
$$

In this way we get a Large Deviation principle for $\mu_{\beta \, A} \to \mu_\infty.$

Remember that we denote
by $\mu_{\infty}^*$ the unique maximizing probability for $A^*$ (it is unique because $\mu_\infty$ is unique for $A$, and, moreover, $A$ and $A^*$ are cohomologous in $\hat{\Sigma}$).

All the results described above are true for expanding
transformations $T$ of class $C^{1+\alpha}$  on the
circle $S^1$. In this case we have to consider the natural extension $\hat{T}$ of $T$. This also includes the case of $T(x)=-\, 2 x$ (mod 1).

In the case
$T: S^1 \to S^1$, given by $T(x)= \,2\,x $ (mod 1), we define ${\hat T}$
in the following way: the Baker transformation associated to $T$, denoted by $\hat{T}(x_1,x_2)$, where $\hat{T}:[0,1]^2\to [0,1]^2 $, is such that satisfies for all $(x_1,x_2)\in [0,1]^2$,
$\hat{T}(x_1, T^*(x_2))=(T(x_1), x_2)$ (see picture bellow) . In this case $T^* : S^1 \to S^1$, with $T^*(y) =  \,2\,y $ (mod 1), $\hat{T}$ plays the role of $\hat{\sigma}$, and $T^*$ plays the role of $\sigma^*$,  on the definitions and results above.

All the above apply for an expanding transformation $T: S^1 \to S^1$, or  $T:[0,1]\to[0,1]$

The transformation $\hat{T}$ on $S^1 \times S^1$, contract vertical fibers by forward iteration and expand (and cut) vertical fibers by backward iteration.

\begin{center}
\includegraphics[scale=0.6,angle=0]{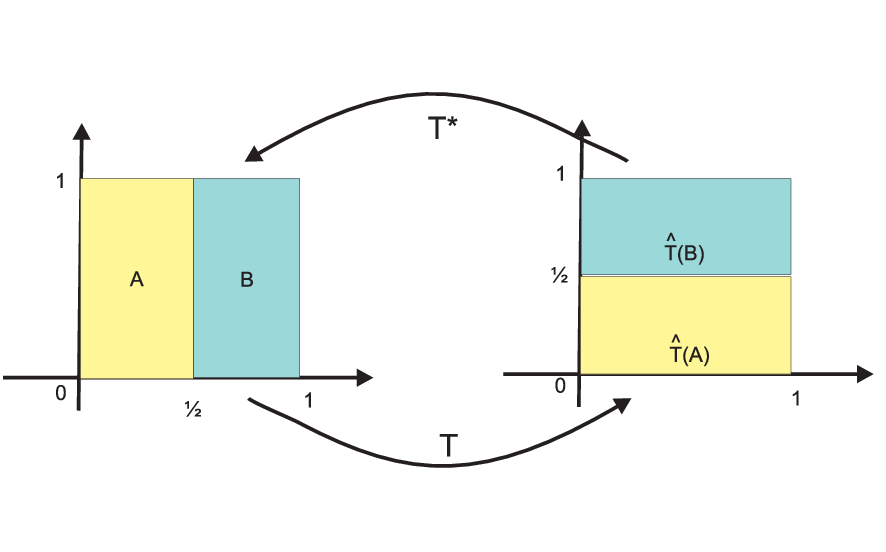}\\
\small{Characterization of $S$}  \\
\end{center}

Remember that we said that  $W: \hat{\Sigma}=\Sigma\times \Sigma \to \mathbb{R}$ satisfies the twist condition on
$ \hat{\Sigma}$, if  for any $(a,b)\in \hat{\Sigma}=\Sigma\times
\Sigma $ and $(a',b')\in\Sigma\times \Sigma $, with $a'> a$,
$b'>b$, we have
\begin{equation}
W(a,b) + W(a',b')  <  W(a,b') + W(a',b).
\end{equation}

We have the analogous definition for expanding transformations on the interval:

\begin{definition} We say $W: [0,1]^2 \to \mathbb{R}$ continuous satisfies the twist condition on
$  [0,1]^2 $, if  for any $(a,b)\in [0,1]^2 $ and $(a',b')\in [0,1]^2  $, with $a'> a$,
$b'>b$, we have
\begin{equation}
W(a,b) + W(a',b')  <  W(a,b') + W(a',b).
\end{equation}

\end{definition}

Same definition for $W$ on $S^1 \times S^1$.

When $x,y\in [0,1]$ (or, on $S^1$), the condition
$$ \frac{\partial^2\, W(x,y)}{\partial x \,\partial y}<0,$$
implies the twist condition for $W$.

The twist condition can be seen as a kind of transversality condition (see \cite{LO})
\vspace{0.3cm}
\begin{exemp}

Consider the transformation $T: S^1 \to S^1$, given by $T(x)= -\,2\,x $ (mod 1) and  $A(x)= a + bx + c x^2$, where
$a,b,c$ are constants and $c>0$. In item b) in the appendix we show an explicit expression for the $W$-kernel and we prove that $W$  satisfies the twist condition. From this, we can get an explicit expression for the calibrated subaction for a certain potential (see remark 6 in the appendix).

We point out that for considering the system above in $S^1$ we have to assume above that $A(0)=A(1).$ If we are interested in the case of $[0,1]$ the same result can be obtained but we do not have to assume  $A(0)=A(1).$

Moreover, we also show in item c) in the appendix  that a certain class of analytic perturbations of $A(x)= a + bx + c x^2$ produces $W$-kernels which are twist.

\end{exemp}

\begin{exemp} In  item d) in the appendix  we show an example of a $W$-kernel for a continuous potential $A$,  and for the action of the shift $\sigma$ on the Bernoulli  space $\{0,1\}^\mathbb{N}$, which is twist.
\end{exemp}

\begin{exemp}

 Consider the Gauss map $T(x)= \frac{1}{x} - [\frac{1}{x}]$ on $[0,1]$.

  We can define the Baker transformation associated to $T$, denoted by $\hat{T}(x_1,x_2)$, where $\hat{T}:[0,1]^2\to [0,1]^2 $.

 The $W$ kernel for $A(x_1)=-\log T'(x_1)$, which is $W(x_1,x_2) = - 2\, \log (1 \,+\, x_1\,x_2)$
 (see \cite{BLT}).

 It is known that the dual of $A=-\log T'$ is $A^*=-\log T'$ (see Proposition 4 in \cite{BLT}).

The maximizing probability for such potential $-\log T'(x)= 2 \log (x)$ is the $\delta$-Dirac in the fixed point $b$, where $b$ is  the golden mean $b=\frac{\sqrt{5}-1}{2}$ (see for instance \cite{CG}). In this case $m(A)= 2 \log (b)$.

Note that $W$ is differentiable on any point $(x_1,x_2) \in  [0,1]^2$.

One can easily see that an explicit calibrated sub-action $u$ (unique up to an additive constant because the maximizing probability is unique \cite{GL1})
satisfying
\begin{equation}
u(x) \le u(T( x) ) - A(x) + m(A),
\end{equation}
is $u(x)= W(x,b) = - 2\, \log (1 + x \,b )$.

Note that
$$ \frac{\partial^2\, W(x,y)}{\partial x \,\partial y}<0,$$
and, therefore, $W$ is twist.

\end{exemp}

\begin{exemp}
Suppose $T(x)$ is $-\,2\,x$ (mod 1), $T: [0,1]\to [0,1]$ and $A:[0,1]\to \mathbb{R}$ is Holder and monotonous. Under some assumptions on $A$ one can get cases where the maximizing probability is unique and with support  on the right fixed point $p$ (see \cite{JS}). In the same way as in last example one can show that $V(x)= W(x,p)$ is a calibrated subaction.

If one considers on the interval $[0,1]$ the potential $A(x)= x^2$ then  we are under such assumptions. One can show that $A^* (y)=y^2$, and $W(x,y)=  (1/3)(x^2+y^2) - (4/3)xy\,$ (see remark 5 in item b) in the appendix). In the same way $ \frac{\partial^2\, W(x,y)}{\partial x \,\partial y}<0.$

\end{exemp}

\begin{exemp}

Consider the transformation $T: S^1 \to S^1$, given by $T(x)= -\,2\,x $ (mod 1) and  $A(x)=-(x-\frac{1}{2})^{2}$ (a continuous potential on $S^1$) for which all results in \cite{BLT} apply (see also \cite{LO} where it is shown in this case  the graph property).

The maximizing probability has support in the periodic orbit of period $2$ (see \cite{J3} and \cite{J6}).

 One can define the continuous Baker transformation associated to $T$, denoted by $\hat{T}(x_1,x_2)$, where $\hat{T}:[0,1]^2\to [0,1]^2 $ is such that satisfies for all $(x_1,x_2)\in [0,1]^2$,
 $\hat{T}(x_1, T(x_2))=(T(x_1), x_2)$.


 In this case, we show in remark 6 in the appendix that  a smooth $W$-kernel is:
$$W(x,y)=-(1/3) x^{2}-(1/3)y^{2}+(4/3)xy-(2/3)x-(1/3)y.$$

The dual potential $A^*$ is equal to $A$.

This $W$-kernel is {\bf not} twist  because  $ \frac{\partial^2\, W(x,y)}{\partial x \,\partial y}>0.$

It follows from a general result presented in \cite{JS} that any maximizing measure for this potential is $\mu_{\infty}=(1-t)\delta_{1/3}+ t \delta_{2/3}$, where $t \in [0,1]$, so the critical value is $m=A(1/3)=A(2/3)$.

It is easy to verify that,
\begin{align*}
V(x) &= (W(x,1/3) - W(1/3,1/3)) \chi_{[0,1/2)}(x) + \\
& W(x,2/3) - W(2/3,2/3) \chi_{[1/2,1]}(x)\\
&=\max\{ W(x,1/3) - W(1/3,1/3),  W(x,2/3) - W(2/3,2/3)\}
\end{align*}
is a calibrated subaction for $A$.
\begin{center}
\includegraphics[scale=0.6,angle=0]{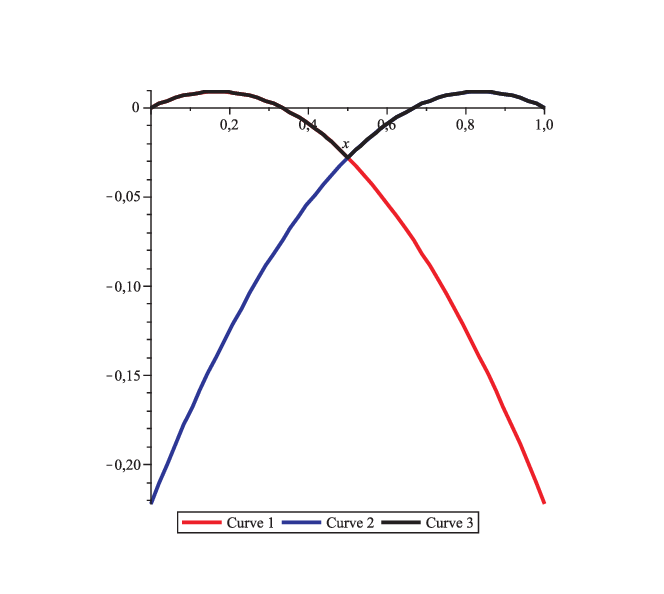}\\
\small{$W(x,1/3) - W(1/3,1/3)$=red, $W(x,2/3) - W(2/3,2/3)$=blue and $\phi$=black - The calibrated subaction is the supremum of the two functions described in the picture.}\\
\end{center}

This calibrated subaction is not analytic but piecewise analytic (see \cite{LOS} for more general results).

\end{exemp}

\begin{exemp}

Consider the transformation $T: S^1 \to S^1$, given by $T(x)= -\,2\,x $ (mod 1) and  $A(x)=(x-\frac{1}{2})^{2}$ (a continuous potential on $S^1$) for which all results in \cite{BLT} apply.

 In this case we show  in item b) in the appendix that a smooth $W$-kernel is:
$$W(x,y)=(1/3) x^{2}+(1/3)y^{2}-(4/3)xy+(2/3)x+(1/3)y.$$

The dual potential $A^*$ is equal to $A$.

This involution kernel $W$ is  twist.

\end{exemp}

Similar results can be obtained for
$T: S^1 \to S^1$, given by $T(x)= \,2\,x $ (mod 1) and  $A(x)=-(x-\frac{1}{2})^{2}$ (a continuous potential on $S^1$)

\begin{definition} Given $G:\hat{\Sigma} \to \mathbb{R}$ upper semi-continuous, and
$f(x)$ continuous, where $f:\Sigma \to \mathbb{R}$,  we
define the $G$-transform of $f$, denoted by $f^\# (y)$, where $f^\#
: \Sigma^* \to \mathbb{R},$ the function such that

\begin{equation}
f^\# (y) =  \max_{x \in \Sigma} \, \{- f(x) + G(x,y) \} .
\end{equation}
We can use also the notation $f^{\#}_G$, instead of $f^\# ,$ if we want to stress the dependence on $G$.
\end{definition}

In this case we say that $f^\#$  is the $G$-conjugate of $f$ \cite{Vi1} \cite{Vi2}.
We use the notation of \cite{R} page 268.

Note that, if we add a constant to $f$, then new $f^\#$ will be
obtained from the old one by subtracting the same constant.
Therefore, in this case the sum $f(x) + f^\# (y)$ will be the same.

We are interested, for example, when $G=-W$ or $G=-W+I$.

A similar definition and properties can be consider for expanding transformations on $[0,1]$.

\begin{proposition} If $V$ is a subaction for $A$, then $V^\#  =V^\#_W$ is a
subaction for $A^*$.
\end{proposition}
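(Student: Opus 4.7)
The plan is to verify directly the subaction inequality
\[
V^{\#}_W(y) \,\le\, V^{\#}_W(\sigma^*(y)) - A^*(y) + m(A^*)
\]
for every $y\in\Sigma^*$, where by definition $V^{\#}_W(y)=\max_{x\in\Sigma}\{-V(x)+W(x,y)\}$. Only two ingredients should enter: the subaction inequality satisfied by $V$, and the defining cohomology identity of the involution kernel. Since $A$ and $A^*$ are cohomologous in $\hat\Sigma$ through $W$, one has $m(A)=m(A^*)$, so the two critical values may be used interchangeably.

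The key move is to transfer the estimate through one backward step of $\hat\sigma$. For fixed $y$ and arbitrary $x\in\Sigma$, set $x':=\tau_y(x)$; since $\hat\sigma^{-1}(x,y)=(\tau_y(x),\sigma^*(y))$, this point satisfies $\sigma(x')=x$. Applying the subaction inequality for $V$ at $x'$ gives
\[
-V(x') \,\ge\, -V(x) + A(x') - m(A).
\]
The involution kernel relation $A^*(y) = A(\tau_y(x)) + W(\tau_y(x),\sigma^*(y)) - W(x,y)$ rearranges to
\[
W(x',\sigma^*(y)) \,=\, W(x,y) - A(x') + A^*(y),
\]
and adding these two displays makes the $\pm A(x')$ terms cancel, leaving
\[
-V(x') + W(x',\sigma^*(y)) \,\ge\, -V(x) + W(x,y) + A^*(y) - m(A).
\]

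The left-hand side is bounded above by $V^{\#}_W(\sigma^*(y))$ (its definition as a maximum over all sequences), so taking the supremum over $x\in\Sigma$ on the right yields
\[
V^{\#}_W(\sigma^*(y)) \,\ge\, V^{\#}_W(y) + A^*(y) - m(A),
\]
which is exactly the required inequality after identifying $m(A)$ with $m(A^*)$. There is no real obstacle; the only point requiring care is the bookkeeping of signs so that substituting $x'=\tau_y(x)$ on the left is compatible with taking a maximum over $x$ on the right to recover $V^{\#}_W(y)$. Continuity of $V^{\#}_W$ (needed for it to qualify as a sub-action in the stated sense) is automatic from the continuity of $V$ and $W$ together with compactness of $\Sigma$.
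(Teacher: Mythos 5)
Your proof is correct and is essentially the same argument as in the paper: both use the subaction inequality for $V$ at $\tau_y(\cdot)$ together with the defining relation of the involution kernel, transferred through one backward step. The only cosmetic difference is that the paper first fixes the maximizer $z_0$ of $V^\#(y)$ and estimates the difference $V^\#(\sigma^*(y)) - V^\#(y)$, whereas you keep $x$ arbitrary and take the supremum at the end.
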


{\bf Proof:} Given $y$ there exist $z^0$ such that

$$ V^\# ( \sigma^* (y) ) - V^\# (y)= \max_{x \in \Sigma} \, \{- V(x) + W(x, \sigma^* (y))  \}
- $$

$$  \max_{z \in \Sigma} \, \{- V(z) + W(z,y)  \} =$$

$$ \max_{x \in \Sigma} \, \{- V(x) + W(x,\sigma^* (y) ) \}
-  \, (\,- V(z_0) + W(z_0,y)\,) \geq $$

$$ - V(\tau_y (z_0) ) \,+\, W(\tau_y (z_0),\sigma^* (y) ) ) \,+\, V(z_0) \,- \,W(z_0,y)  \geq $$

$$ A ( \tau_y (z_0)  ) - m(A) +  W(\tau_y (z_0), \sigma^* (y) ) \,-
\,W(z_0,y)\,  =$$

$$ A^* (y) - m (A)  \,           =
A^* (y) - m (A^* )    .$$

\qed

The subaction you get by $-W$-transform is not necessarily calibrated.

Note that if we add a constant to $W$ (the new $W$ will be also a
$W$-Kernel), then all of the above will be also true.

In a similar  way like in the reasoning of last proposition one can get:

\begin{proposition} If $V^*$ is a sub-action for $A^*$, then
$$(V^*)^\#_W\,(x)=   \max_{z \in \Sigma^*} \, \{- V^*(z) + W(x,z)  \} $$
is a subaction for $A$.
\end{proposition}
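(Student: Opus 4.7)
The plan is to mirror the proof of Proposition 1 given just above, interchanging throughout the roles of $\sigma$ and $\sigma^*$ (and of $A$ and $A^*$). Fix $x \in \Sigma$; the goal is to verify the sub-action inequality $(V^*)^\#_W(\sigma(x)) \ge (V^*)^\#_W(x) + A(x) - m(A)$. I would start by picking a maximizer $z^0 \in \Sigma^*$ realizing $(V^*)^\#_W(x)$, so that $(V^*)^\#_W(x) = -V^*(z^0) + W(x, z^0)$; its existence is guaranteed by compactness of $\Sigma^*$ together with upper semi-continuity of $W$.

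The key step is the correct choice of a test point $z' \in \Sigma^*$ to plug into the max defining $(V^*)^\#_W(\sigma(x))$. The involution kernel identity must be arranged to produce, on the right-hand side, the quantities $W(x, z^0)$ and $A(x)$; this forces the choice $\sigma^*(z') = z^0$ with $z'_0 = x_0$ (that is, $z'$ is obtained from $z^0$ by pre-pending the symbol $x_0$). A direct check then yields $\tau_{z'}(\sigma(x)) = x$, so applying the defining identity for the involution kernel to the pair $(\sigma(x), z') \in \hat{\Sigma}$ gives
\[
A^*(z') = A(x) + W(x, z^0) - W(\sigma(x), z'),
\]
equivalently $W(\sigma(x), z') = W(x, z^0) + A(x) - A^*(z')$. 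This construction of $z'$ is the exact mirror of the substitution $\tau_y(z_0)$ performed in Proposition 1, and it is the one place where thought is required; everything else is formal bookkeeping.

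From here I would finish as follows. Using $z'$ as a test element, $(V^*)^\#_W(\sigma(x)) \ge -V^*(z') + W(\sigma(x), z')$; the sub-action inequality for $V^*$ at $z'$ gives $-V^*(z') \ge -V^*(\sigma^*(z')) + A^*(z') - m(A^*) = -V^*(z^0) + A^*(z') - m(A^*)$; the two $A^*(z')$ contributions cancel; and invoking the equality $m(A) = m(A^*)$ (valid because $A$ and $A^*$ are cohomologous on $\hat{\Sigma}$, as recalled in the excerpt) collapses the bound to $(V^*)^\#_W(\sigma(x)) \ge -V^*(z^0) + W(x, z^0) + A(x) - m(A) = (V^*)^\#_W(x) + A(x) - m(A)$, which is precisely the sub-action property for $A$. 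Continuity of $(V^*)^\#_W$ is inherited from continuity of $V^*$ and $W$ together with compactness of $\Sigma^*$, completing the verification.
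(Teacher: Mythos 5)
Your proof is correct and carries out exactly the mirroring that the paper signals with ``in a similar way like in the reasoning of last proposition.'' The choice $z' = \tau^*_x(z^0)$ (prepending $x_0$ to $z^0$) is the right dual of the substitution $\tau_y(z_0)$ used in Proposition 1, the involution kernel identity applied at $(\sigma(x),z')$ is used correctly, and the cancellation together with $m(A)=m(A^*)$ yields the sub-action inequality as claimed.
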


\bigskip

Analogous definitions can be consider for an expanding transformation  $T:S^1 \to S^1$. This also includes the case of $T(x)=-\, 2 x$ (mod 1).

\bigskip
\section{The transport problem}\label{secaosegunda}

We assume the maximizing probability $\mu_\infty$ for $A$ is unique.

We denote by $\mu_\infty^*$ a fixed maximizing probability for
$A^*$.

We denote by ${\cal K}( \mu_\infty, \mu_{\infty}^* )$ the set of
probabilities $\hat{\eta} (x,y)$ on $\hat{\Sigma}$, such that

$$ \pi_ x^*  (\hat{\eta} ) = \mu_{\infty}, \, \, \text{and}\,\,\pi_ y^*  (\hat{\eta} ) =
\mu_{\infty}^* \,.$$

We are going to consider bellow the cost function $c(x,y) =I(x) - W(x,y) + \gamma  ,$ which is defined for $x$ such that $I(x)\neq \infty$.

{\bf The Kantorovich Transport Problem:} Given $A$ (and all the
probabilities described above) we are interested in the minimization
problem

$$ C ( \mu_\infty, \mu_{\infty}^* )   \,=\,  \inf_{\hat{\eta} \in {\cal K}( \mu_\infty, \mu_{\infty}^* ) }  \int \int
(I(x) - W(x,y) + \gamma ) \, d\,\hat{\eta} \,= $$ $$\,
\inf_{\hat{\eta} \in {\cal K}( \mu_\infty, \mu_{\infty}^* ) }  \int
\int c(x,y) \, d\,\hat{\eta} =   $$

\begin{equation}
\,  \max_{\hat{\eta} \in {\cal K}( \mu_\infty, \mu_{\infty}^* ) }  \int \int
(W(x,y) - \gamma - I(x) ) \, d\,\hat{\eta} \,
\end{equation}

where, $I$ is the deviation function for $\mu_\infty= \lim_{\beta
\to \infty} \, \mu_{\beta A}$ (see \cite{BLT}),

\begin{equation}
c_\beta=\int \int e^{ \beta\,
W(y,x)} \, d \nu_{\beta A} (x) \, d \nu_{\beta A^* } (y),
\end{equation} and

\begin{equation}
\gamma =\lim_{\beta \to \infty} \frac{1}{\beta}    \log c_\beta           \, ,
\end{equation}
 as in
proposition 5 in \cite{BLT}.

We call $c(x,y) =  - W(x,y) + \gamma + I(x)$ the cost function. Therefore, $c$ is lower semi-continuous.

A probability $  \hat{\eta} $ on $\hat{\Sigma} $ which attains such
minimum is called an optimal transport probability. We denote it by $\hat{\mu}$.

We will show later that $\hat{\mu}_{max}$, the natural extension of
$\mu_\infty$, will be the optimal transport probability $\hat{\mu}$.

\medskip

One of our main results is Theorem \ref{mai} which claims that:

The support of
$\hat{\mu}_{max} $ is $c$-cyclically monotone. In other words, the the twist condition for $c$ is true when restricted to the support of the maximizing probability $\hat{\mu}_{max}$.

\medskip
 {\bf
Remark 1:} Note that if we subtract the deviation function $I(x)$ of
the cost function, that is, if we consider a new cost $c(x,y)
=-W(x,y)+ \gamma $, the problem above will not change, because $I$
is constant zero in the support of $\mu_\infty$.

In other words
$$C ( \mu_\infty, \mu_{\infty}^* )   \,=\,  \inf_{\hat{\eta}
\in {\cal K}( \mu_\infty, \mu_{\infty}^* ) }  \int \int (- W(x,y) +
\gamma ) \, d\,\hat{\eta} \,, $$

and, the optimal transport probability will be the same.

In some sense this setting is nicer because the cost $c$ is a continuous function on $\hat{\Sigma}$.

\begin{definition} A pair of functions $f(x)$ and $f^\#(y)$ will be
called $c$-admissible (or, just admissible for short) if
\begin{equation}
f^\# (y) =  \min_{x \in \Sigma} \, \{- f(x) + c(x,y) \} \, \,
\end{equation}
\end{definition}

In other words $-f^\#$ is the $-c$-conjugate of $-f$.

Note that in this case, $\forall x \in \Sigma,\, y\in \Sigma^*$, we have that
$$f(x) + f^\# (y) \leq c(x,y) .$$

We denote by ${\cal F}$ the set of all admissible pairs   $(f(y)
,f^\# (y))$.

\medskip

{\bf The Kantorovich dual Problem:} Given $A$ and the corresponding  $c$ ($W$ and all the
probabilities described above) we are interested in the maximization
problem
\begin{equation}
D (\mu_\infty  , \mu_{\infty}^*  )   \,=\,  \max_{(f, f^\#) \in {\cal F} }\, (\, \int f d \mu_{\infty} +  \int
 f^\# d \mu_{\infty}^* \,)   .
 \end{equation}

\bigskip

A pair of admissible $(f, f^\#)\in {\cal F}$ which attains the
maximum value will be called an optimal pair.

The Kantorovich duality theorem (see \cite{Vi1}) claims that under general conditions  $D (\mu_\infty  , \mu_{\infty}^*  ) = C ( \mu_\infty, \mu_{\infty}^* ).$

The main tool to prove this result is the Fenchel-Rockafellar duality Theorem.

\begin{theorem}[\textbf{Fenchel-Rockafellar duality}]\label{FR}
\label{Fenchel}
Suppose $E$ is  a normed vector space,  $\Theta$ and $\Xi$ two convex functions defined on $E$ taking values in $\mathbb{R}\cup \{+\infty\}$. Denote $\Theta^{\ast}$ and  $\Xi^{\ast}$, respectively, the Legendre-Fenchel transform of  $\Theta$ and $\Xi$.
Suppose there exists  $v_0\in E$, such that $\Theta(v_0)<+\infty,\, \Xi(v_0)<+\infty$ and that $\Theta$ is continuous on $v_0$.

Then,
\begin{equation}
\inf_{v \in E}[\Theta(v)+\Xi(v)]=\sup_{f\in E^{*}}[-\Theta^{*}(-f)-\Xi^{*}(f)] \label{rockafeller}
\end{equation}
Moreover, the supremum in ($\ref{rockafeller}$) is attained in at least one element in $E^*$.
\end{theorem}

We will not present the proof of this general theorem but we will present a nice geometric proof in a simple case (one-dimensional) in item e) in the Appendix.

\bigskip

We suppose, from now on, that the maximizing probability for $A$,
denoted by $\mu_\infty$ is unique.

We denote, as in \cite{CLT} the calibrated sub-actions  $V$ and
$V^*$ by
\begin{equation}
 V(x) = \lim_{\beta \to \infty} \frac{1}{\beta} \, \log
\phi_{\beta A} (x)\,\, \mbox{and} \,\, V^*(y) = \lim_{\beta \to
\infty} \frac{1}{\beta} \, \log \phi_{\beta A^*} (y)\,\, .
\end{equation}

The above convergence is uniform and $V$ is (up to constant) the unique calibrated sub-action for $A$ (see \cite{CLT} \cite{BLT} \cite{GL1}).

We will show later that $(f,f^\#)$ such that $f(x)= -V(x)$ and $f^\#
(y) = - V^* (y)$ is the optimal pair.

\bigskip

{\bf Important property:} If $\hat{\mu}$ is an optimal transport
probability and if $(f,f^\#)$ is an optimal pair in ${\cal F}$, then
the support of $\hat{\mu}$ is contained in the set

\begin{equation}
\{\, <y,x>\,\, \in \hat{ \Sigma} \,| \,\mbox{such that}\,\, (f(x) + f^\#
(y))\, = \, c(x,y)\,\}.
\end{equation}

 It follows from the prime and dual linear programming
problem formulation. The condition above is the complementary
slackness condition (see \cite{EG} \cite{Ra} \cite{GM}).

\bigskip

The reciprocal of this result is also true (see \cite{Vi2} Remark 5.13 page 59).

If $x$ and $y$ are such that  $(f(x) + f^\#
(y))\, = \, c(x,y)$ we say that they are realizers for the cost $c$. In \cite{CLO} it is shown that the set of realizers for $I-W$  is an invariant set for the dynamics of
$\hat{\sigma}.$ In this section we are mainly concerned with the support and not with all realizers.

If one finds $\hat{\mu}$ an an admissible pair  $(f,f^\#)$
satisfying the above claim (for the support), then, one solves the
Kantorovich problem, that is, one finds the optimal transport
probability $\hat{\mu}$ .

No we will prove Theorem 1.

\begin{proposition}
The minimizing Kantorovich
probability $\hat{\mu}$ on $\hat{\Sigma}$  associated to $-W$  is
$\hat{\mu  }_{max}$.

\end{proposition}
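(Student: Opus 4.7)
The plan is to apply Kantorovich duality: I would exhibit an admissible dual pair $(f,f^\#)$ and show that $\hat{\mu}_{max}$ is supported on the set where $f(x)+f^\#(y)=c(x,y)$; by the reciprocal of the Important Property (Remark~5.13 in \cite{Vi2}), this forces $\hat{\mu}_{max}$ to be the optimal transport plan. The natural candidates are $f=-V$, $f^\#=-V^*$, and it is cleaner to work with the cost $c(x,y)=I(x)-W(x,y)+\gamma$ (equivalent to $-W+\gamma$ by Remark~1), since the resulting admissibility inequality then holds on all of $\hat{\Sigma}$.

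\textbf{Step 1 (feasibility).} I would first verify $\hat{\mu}_{max}\in\mathcal K(\mu_\infty,\mu_\infty^*)$. By construction of the natural extension, $\pi_x^*\hat{\mu}_{max}=\mu_\infty$. The defining identity for $A^*$ rewrites on $\hat\Sigma$ as a coboundary $A^*\circ\pi_y=A\circ\pi_x\circ\hat\sigma^{-1}+W\circ\hat\sigma^{-1}-W$; integrating against the $\hat\sigma$-invariant $\hat{\mu}_{max}$ gives
\begin{equation*}
\int A^*\,d(\pi_y^*\hat{\mu}_{max})=\int A\,d\mu_\infty=m(A)=m(A^*),
\end{equation*}
so uniqueness of the $A^*$-maximizer forces $\pi_y^*\hat{\mu}_{max}=\mu_\infty^*$.

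\textbf{Step 2 (admissibility of $(-V,-V^*)$).} Applying $\tfrac{1}{\beta}\log$ to the BLT identity
\begin{equation*}
\phi_{\beta A^*}(y)=\tfrac{1}{c_\beta}\int e^{\beta W(x,y)}\,d\nu_{\beta A}(x),
\end{equation*}
and passing to the zero-temperature limit via Varadhan's lemma, using $V=\lim\beta^{-1}\log\phi_{\beta A}$, $\gamma=\lim\beta^{-1}\log c_\beta$, and (extracting the large-deviation rate of $\nu_{\beta A}$ from the factorization $\mu_{\beta A}=\phi_{\beta A}\,\nu_{\beta A}$ and the known LDP for $\mu_{\beta A}$) the identification $\lim\beta^{-1}\log\nu_{\beta A}=-V-I$ up to an additive constant, one obtains
\begin{equation*}
V^*(y)=\sup_{x\in\Sigma}\bigl\{W(x,y)-V(x)-I(x)\bigr\}-\gamma,
\end{equation*}
which is exactly the pointwise admissibility inequality $-V(x)-V^*(y)\le c(x,y)$ on $\hat\Sigma$.

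\textbf{Step 3 (saturation on the support).} Since $\supp(\hat{\mu}_{max})\subset\supp(\mu_\infty)\times\supp(\mu_\infty^*)$ and $I\equiv 0$ on $\supp(\mu_\infty)$, it remains to show $V(x)+V^*(y)=W(x,y)-\gamma$ on this set. A point $<y,x>\in\supp(\hat{\mu}_{max})$ is a limit of backward orbits that simultaneously realize the calibration of $V$ at $x$ and the supremum defining $V^*(y)$, so the supremum in the formula above is attained at $x$ and equality holds. By the reciprocal of the Important Property, $\hat{\mu}_{max}$ is optimal; integrating $-V-V^*=c$ against $\hat{\mu}_{max}$ gives
\begin{equation*}
\int c\,d\hat{\mu}_{max}=\int(-V)\,d\mu_\infty+\int(-V^*)\,d\mu_\infty^*,
\end{equation*}
which simultaneously identifies $(-V,-V^*)$ as an optimal dual pair and thus delivers Theorem~2 as a by-product.

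The hardest point is Step~2: the zero-temperature asymptotic $\beta^{-1}\log\nu_{\beta A}\to -V-I$ is not explicitly recorded in the excerpt and has to be deduced from the LDP for $\mu_{\beta A}$ combined with the limit for $\phi_{\beta A}$, while Varadhan's lemma must be applied uniformly in $y$ for the H\"older function $W$ in order to justify the exchange of $\lim$ and $\sup$. Once the formula for $V^*$ is in hand, feasibility and saturation on $\supp(\hat{\mu}_{max})$ reduce to bookkeeping, reflecting the fact that $\hat{\mu}_{max}$ is carried by maximizing backward orbits.
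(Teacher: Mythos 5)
Your proof follows the same strategy the paper uses: establish admissibility of the dual pair $(-V,-V^*)$ via Varadhan's lemma applied to the BLT integral identity for $\phi_{\beta A^*}$, verify saturation $-V(p)-V^*(p^*)=c(p,p^*)$ on $\supp\,\hat{\mu}_{max}$, and conclude by complementary slackness. Two remarks on your variant. First, the paper avoids the issue you flag as hardest, namely the LDP for $\nu_{\beta A}$ with rate $-V-I$: it rewrites the BLT identity as $\phi_{\beta A^*}(y)=\int e^{\beta W(x,y)-c_\beta-\log\phi_{\beta A}(x)}\,d\mu_{\beta A}(x)$ and applies Varadhan directly against $\mu_{\beta A}$, whose LDP with rate $I$ and the uniform convergence $\tfrac{1}{\beta}\log\phi_{\beta A}\to V$ are already on record; this is a cleaner route than reconstructing a rate for $\nu_{\beta A}$. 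Second, your Step 3 saturation argument (backward orbits simultaneously realizing calibration and the supremum) is the right intuition but as written is only a sketch; the paper handles this step by invoking Proposition~10(1) of \cite{BLT}, which states exactly the identity $-V(p)-V^*(p^*)=-W(p,p^*)+\gamma$ on $\supp\,\hat{\mu}_{max}$, so if you want a self-contained argument this is the place that still needs to be made rigorous.
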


{\bf Proof:}
Proposition 10 (1) in \cite{BLT} claims that if $\hat{\mu}_{max}$ is the
natural extension of the maximizing probability $\mu_\infty$, then
for all $<p^* | p>$ in the support of $\hat{\mu}_{max}$ we have

$$- V(p)\, - \,V^* (p^*)\,= \, - W(p,p^* )  \,+\, \gamma .$$

This is the same as saying that in the support of $\hat{\mu}_{max}$

$$- V(p)\, - \, V^* (p^*)\,=\,  - W(p,p^*)  \,+ \,\gamma\, +\, I(p)\,=\,c(p,p^*) ,$$

because $I$ is zero in the support of $\mu_\infty.$

Then if $-V(x)$ and $-V^* (y)$ is an admissible pair, then
$\hat{\mu}_{max}$ is the optimal transport probability for such
$c(x,y)$. This will be shown in the next proposition.

We will show bellow that the $-c$-transform of $V$ is $V^*$.

\qed

Note that if $W$ is a $W$-Kernel for $A$, for all $\beta$, we have
that $\beta W$ is a $W$-Kernel for $\beta A$. We denote by $c_\beta$
the normalizing constant for $\beta W$, as in \cite{BLT}. It is known that
$ \frac{1}{\beta} \log \, c_\beta = \gamma$.

Now we will show Theorem 2.

\begin{proposition} The pair
$(-V,-V^*)$ is admissible.
\end{proposition}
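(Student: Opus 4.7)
The plan is to verify the defining equation for admissibility, namely
$$-V^*(y)=\min_{x\in\Sigma}\bigl\{V(x)+c(x,y)\bigr\}=\min_{x\in\Sigma}\bigl\{V(x)+I(x)-W(x,y)+\gamma\bigr\},$$
or equivalently
$$V^*(y)=\max_{x\in\Sigma}\bigl\{W(x,y)-V(x)-I(x)-\gamma\bigr\}. \qquad (\star)$$
I will obtain $(\star)$ as the zero-temperature limit of the BLT integral formula
$$\phi_{\beta A^*}(y)=\int e^{\,\beta W(x,y)-c_\beta}\,d\nu_{\beta A}(x),$$
taking $\tfrac{1}{\beta}\log$ of both sides and passing to the limit $\beta\to\infty$.

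First I would rewrite the integral in terms of the invariant Gibbs state using $d\nu_{\beta A}=\phi_{\beta A}^{-1}\,d\mu_{\beta A}$, obtaining
$$\phi_{\beta A^*}(y)=e^{-c_\beta}\int e^{\,\beta W(x,y)}\,\phi_{\beta A}(x)^{-1}\,d\mu_{\beta A}(x).$$
Since $\tfrac{1}{\beta}\log\phi_{\beta A}\to V$ uniformly on $\Sigma$ (this is the very formula defining $V$ stated earlier in the excerpt), the factor $\phi_{\beta A}(x)^{-1}$ contributes $-\beta V(x)+o(\beta)$ uniformly in the exponent, so
$$\frac{1}{\beta}\log\phi_{\beta A^*}(y)=-\frac{c_\beta}{\beta}+\frac{1}{\beta}\log\!\int e^{\,\beta(W(x,y)-V(x))+o(\beta)}\,d\mu_{\beta A}(x).$$

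Next I would apply Varadhan's integral lemma (Laplace principle) to the integral against $\mu_{\beta A}$, using the large deviation principle for $\mu_{\beta A}$ with rate $I$ quoted from \cite{BLT}: for each fixed $y$, the continuous function $x\mapsto W(x,y)-V(x)$ is bounded on the compact space $\Sigma$, and $I$ is lower semi-continuous, so
$$\lim_{\beta\to\infty}\frac{1}{\beta}\log\int e^{\,\beta(W(x,y)-V(x))}\,d\mu_{\beta A}(x)=\max_{x\in\Sigma}\bigl\{W(x,y)-V(x)-I(x)\bigr\}.$$
Combined with $\tfrac{1}{\beta}\log c_\beta\to\gamma$ and the uniform convergence $\tfrac{1}{\beta}\log\phi_{\beta A^*}(y)\to V^*(y)$, this yields exactly $(\star)$. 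Finally, since $(\star)$ says precisely $-V^*(y)=\min_x\{V(x)+c(x,y)\}$, the pair $(-V,-V^*)$ satisfies the $c$-admissibility identity.

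The main technical obstacle is justifying the Laplace asymptotics uniformly in $y$ and handling the $o(\beta)$ error introduced by replacing $\phi_{\beta A}$ with $e^{\beta V}$; both are controlled by the already-established uniform convergence of $\tfrac{1}{\beta}\log\phi_{\beta A}\to V$ and the compactness of $\Sigma$. An alternative, more self-contained route would be to prove $(\star)$ directly by iterating the calibration equations for $V$ and $V^*$ together with the cocycle identity defining the involution kernel $W$, but the Varadhan's-lemma route is the shortest given the tools already assembled in the preceding sections.
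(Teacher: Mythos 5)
Your proof is correct and follows essentially the same route as the paper: start from the BLT integral formula $\phi_{\beta A^*}(y)=\int e^{\beta W(x,y)-c_\beta}\phi_{\beta A}(x)^{-1}\,d\mu_{\beta A}(x)$, use the uniform convergence of $\tfrac{1}{\beta}\log\phi_{\beta A}\to V$, and apply Varadhan's lemma with the LDP rate $I$ to obtain $V^*(y)=\sup_x\{W(x,y)-\gamma-V(x)-I(x)\}$, which is precisely the admissibility identity. The paper does exactly this.
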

{\bf Proof:} For a fixed $y$ we have to show that
$$-V^* (y)\, =\, (-V)^\#_c\,=\,
\inf_{x \in \Sigma} \, \{ -(-V(x)) + c(x,y) \} \,.$$

This is the same as

$$V^* (y) = \sup_{x \in \Sigma} \, \{ \,(-V(x)) - c(x,y) \, \}= \sup_{x
\in \Sigma} \, \{ \, -V(x) - (\gamma - W(x,y) + I(x)   \,)   \}
\,,$$

or, for all $x$
\begin{equation}
-V^* (y)\,\leq V(x) + c(x,y) \,.
\end{equation}

From proposition 3 in \cite{BLT} (we just write here $W(x,y)$,
instead of $W(y,x)$ there) we have
$$ \phi_{\beta A^*} (y) = \int e^{\beta\, W_A(x,y) - c_\beta}
\frac{1}{ \phi_{\beta A} (x)   } \, d \mu_{\beta A} (x) =$$ $$\int
e^{\beta\, W_A(x,y) - c_\beta -\log \phi_{\beta A} (x) }  \, d
\mu_{\beta A} (x) .$$

Consider now the limit

$$V^* (y)\,=\, \lim_{\beta \to \infty} \frac{1}{\beta} \, \log (\phi_{\beta A^*}
(y))
=$$
$$\lim_{\beta \to \infty} \frac{1}{\beta} \, \log \int
e^{\beta\, W_A(x,y) - c_\beta -\log \phi_{\beta A} (x) }  \, d
\mu_{\beta A} (x) .$$

From \cite{CLT} the function $\frac{1}{\beta} \, \log (\phi_{\beta
A} (x))$ converges uniformly with $\beta$ to $V(x)$.

Therefore, one can write
$$\lim_{\beta \to \infty} \frac{1}{\beta} \, \log \int
e^{\beta\, W_A(x,y) - c_\beta -\log \phi_{\beta A} (x) }  \, d
\mu_{\beta A} (x) = $$ $$  \lim_{\beta \to \infty} \frac{1}{\beta}
\, \log \int e^{\beta\, (\, W_A(x,y) - \gamma - V (x) \,)} \, d
\mu_{\beta A} (x)          $$

Now, by Varadhan's Integral Lemma \cite{DZ} we obtain
$$V^* (y)\,= \sup_x \{W_A(x,y) - \gamma - V (x) - I(x)\}=\sup_x \{- V(x) + W(x,y) -\gamma  - I(x)\}  ,$$

where $I$ is the deviation function.

\qed

Finally, we get that
$\hat{\mu}_{max}$ is the optimal transport probability for such
$c(x,y)$. From now on we will use either the notation $\hat{\mu}$ or $\hat{\mu}_{max}$ for the optimal transport probability.

In \cite{LOS}  Transport Theory is used as a tool to show that in some cases the calibrated subaction is piecewise analytic.
In \cite{CLO} some generic properties of the potential $A$ is considered and special results about  the realizers of the
$W-I$ are obtained.

The last theorem says: for any $y\in \Sigma^*$ we have

\begin{equation}V^* (y)\,= \sup_{x\in \Sigma} \{- V(x) - c(x,y) \}   .
\end{equation}

Note that when $y=p^*$, for $p^*$  in the support of $\mu_{\infty}^*$, the  supremum
$$V^* (p^*)\,=\sup_x \{- V(x) + W(x,p^*) -\gamma  - I(x)\}= \sup_x \{- V(x) - c(x,p^*) \}   ,$$
is realized at $x=p$, for $p$ in the support of $\mu_\infty$ (with
$<p^*,p>$ in the support of $\hat{\mu}$).
\vspace{0.2cm}

{\bf Remark 2:}
Remember that, if the maximizing probability for $A^*$ is unique,
then there is a unique calibrated sub-action for $A^*$ (up to
additive constant) \cite{BLT} \cite{GL1}.

\vspace{0.2cm}

Analogous definitions and properties can be obtained for $T:S^1 \to S^1$. This also includes the case of $T(x)=-\, 2 x$ (mod 1).

We could likewise consider the analogous problem for  $A^*$: given $A^*$ (obtained from $A$) fixed, denote $I^{*}:\Sigma^* \to \mathbb{R}$, the non-negative deviation function for $\mu_{\beta \, A^* }\to \mu_\infty^*$.

Denote $c^* (x,y) =(I^*(y) - W(x,y) + \gamma ) $.

Then, consider the problem
$$ C ( \mu_\infty, \mu_{\infty}^* )   \,=\,  \inf_{\hat{\eta} \in {\cal K}( \mu_\infty, \mu_{\infty}^* ) }  \int \int
(I^*(y) - W(x,y) + \gamma ) \, d\,\hat{\eta} \,= $$ $$   \,
\inf_{\hat{\eta} \in {\cal K}( \mu_\infty, \mu_{\infty}^* ) } c^* (x,y)\, d\,\hat{\eta}   =     \,
\inf_{\hat{\eta} \in {\cal K}( \mu_\infty, \mu_{\infty}^* ) }  \int
\int (-W(x,y)+ \gamma) \, d\,\hat{\eta} ,   $$
which have the same minimizing measures, as for the minimization for $c(x,y) =(I(x) - W(x,y) + \gamma ) $ among probabilities on $ {\cal K}( \mu_\infty, \mu_{\infty}^* )$.

Note also that from proposition 3 in \cite{BLT}  we have
$$ \phi_{\beta A} (x) = \int e^{\beta\, W_A(x,y) - c_\beta}
\frac{1}{ \phi_{\beta A^*} (y)   } \, d \mu_{\beta A^*} (y) =$$ $$\int
e^{\beta^*\, W_A(x,y) - c_\beta -\log \phi_{\beta A^*} (y) }  \, d
\mu_{\beta A^*} (y) .$$

In the same way as before one can show that
for any $x\in \Sigma$, we have
\begin{equation}
V(x)=(-V^*)^\#_{c^*}\,= \sup_{y\in \Sigma^*} \{- V^*(y) - c^*(x,y) \}   .
\end{equation}

Note that $c(x,y)= c^* (x,y)$ in the support of the minimizing  $\,\hat{\mu}_{max}$ for $c$ (or for $c^*)$ .

{\bf Remark 3:}
It is not necessarily true that $   (\,(-V^*)^\#_{c^*}\,)^\#_{c^*}= -V^*.$ However, the expression is true when restricted to the support of the optimal transport probability $\hat{\mu}_{max}$.
In the same way  $   (\,(-V)^\#_{c}\,)^\#_{c}= -V$ in the support of $\hat{\mu}_{max}$.

\bigskip
\section{Graph properties and the twist condition}\label{secaog}

Consider a lower semi-continuous continuous cost function  $c(x,y)$ on $\hat{\Sigma}$ (or, a continuous cost function  $-W(x,y)$ on $\hat{\Sigma}$). We refer the reader to
\cite{Ra} \cite{Vi1} \cite{Vi2} and {\cite{GM} for general references on transport mass problems.

\begin{definition}
A set $S\subset \hat{\Sigma}$ \,is called $c$-cyclically
monotone, if for any finite number of points $(x_j,y_j)$ in $S$, $j
\in \{1,2,...,n\}$, and any permutation $\sigma$ of the $n$ letters, we have
\begin{equation}
 \sum_{j=1}^n c( x_j,y_j) \,\leq\, \sum_{j=1}^n c( x_{\sigma
(j)},y_j).
\end{equation}
\end{definition}

\begin{proposition}(see Theorem 2.3 \cite{GM}).\, For a continuous
function $c(x,y) \geq 0,$ where $\hat{\Sigma}$, if  $\rho \in {\cal K}
(\mu_{\infty}, \mu_{\infty}^*)$ is optimal for $c$, then, $\rho$ has
a $c$-cyclically monotone support.
\end{proposition}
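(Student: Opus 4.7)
I will argue by contradiction. Suppose $\rho \in \mathcal{K}(\mu_\infty,\mu_\infty^*)$ is optimal for $c$, yet its support is not $c$-cyclically monotone. Then there exist points $(x_1,y_1),\dots,(x_n,y_n)$ in $\operatorname{supp}(\rho)$ and a permutation $\sigma$ of $\{1,\dots,n\}$ for which
\[
\delta_0 \;:=\; \sum_{j=1}^{n} c(x_j,y_j) \;-\; \sum_{j=1}^{n} c(x_{\sigma(j)},y_j) \;>\; 0.
\]
My strategy is to manufacture a new transport plan $\tilde{\rho}\in\mathcal{K}(\mu_\infty,\mu_\infty^*)$ with strictly smaller cost by rerouting a small amount of mass in $\rho$ according to $\sigma$.

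First, using the continuity of $c$ on the compact space $\hat\Sigma$, I would choose pairwise disjoint open neighborhoods $U_j \ni x_j$ in $\Sigma$ and $V_j \ni y_j$ in $\Sigma^*$ small enough that for every choice $u_j\in U_j,\,v_j\in V_j$ one has
\[
\sum_{j=1}^{n} c(u_j,v_j) \;-\; \sum_{j=1}^{n} c(u_{\sigma(j)},v_j) \;>\; \tfrac{\delta_0}{2}.
\]
Since each $(x_j,y_j)\in\operatorname{supp}(\rho)$, the restrictions $\rho_j := \rho|_{U_j\times V_j}$ have strictly positive mass $m_j>0$. Let $P_j := \rho_j/m_j$, a probability measure concentrated near $(x_j,y_j)$, with marginals $\alpha_j := (\pi_x)_* P_j$ on $\Sigma$ and $\beta_j := (\pi_y)_* P_j$ on $\Sigma^*$.

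Next I would define, for a small parameter $0<\eta\leq \min_j m_j$, the signed measure
\[
\tilde{\rho} \;:=\; \rho \;-\; \eta\sum_{j=1}^{n} P_j \;+\; \eta\sum_{j=1}^{n} \alpha_{\sigma(j)}\otimes\beta_j.
\]
Two verifications are needed. For the marginals, the disjointness of the $U_j\times V_j$ and the trivial reindexing $\sum_j \alpha_{\sigma(j)} = \sum_j \alpha_j$ give
\[
(\pi_x)_*\tilde{\rho} \;=\; \mu_\infty + \eta\sum_j\bigl(\alpha_{\sigma(j)}-\alpha_j\bigr) \;=\; \mu_\infty,
\]
and the analogous computation yields $(\pi_y)_*\tilde{\rho} = \mu_\infty^*$. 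For non-negativity, the subtracted part $\eta P_j = (\eta/m_j)\rho_j$ is dominated by $\rho_j$ on $U_j\times V_j$ because $\eta\leq m_j$, and the subtracted pieces live on disjoint sets; the added product measures are automatically non-negative. Hence $\tilde{\rho}\in\mathcal{K}(\mu_\infty,\mu_\infty^*)$.

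Finally I would compute the change in cost. Since $P_j$ is supported in $U_j\times V_j$ and $\alpha_{\sigma(j)}\otimes\beta_j$ is supported in $U_{\sigma(j)}\times V_j$, and each is a probability measure, the continuity bound above gives
\[
\int c\,d\tilde{\rho} - \int c\,d\rho \;=\; \eta\sum_{j=1}^n\Bigl(\int c\,d(\alpha_{\sigma(j)}\otimes\beta_j) - \int c\,dP_j\Bigr) \;<\; -\eta\,\tfrac{\delta_0}{2} \;<\; 0,
\]
contradicting the optimality of $\rho$. The main obstacle is the simultaneous bookkeeping of marginal preservation and non-negativity; the trick is to work with neighborhoods whose products are disjoint (so the subtracted mass does not overlap) and to scale by $\eta\leq \min_j m_j$, after which the continuity argument on the $c$-integrals is essentially routine.
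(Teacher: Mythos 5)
The paper itself does not prove this proposition: it simply cites Theorem 2.3 of \cite{GM} and instead gives a separate, direct proof of the \emph{Corollary} (that $\operatorname{supp}\hat\mu_{\max}$ is $c$-cyclically monotone) in the subsequent theorem, using the involution kernel $W$ and the functions $V,V^*,I$. Your argument is the classical ``mass-rerouting'' proof of the general transport-theory fact, which is essentially the one in \cite{GM} and in the standard references such as Villani's book, so it is the right approach and is sound.

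Two small technicalities in your write-up deserve attention. First, you ask for ``pairwise disjoint $U_j\ni x_j$'' and ``pairwise disjoint $V_j\ni y_j$'', but if two of the chosen support points share an $x$-coordinate (or a $y$-coordinate) this is impossible; what you actually use later is only that the products $U_j\times V_j$ be pairwise disjoint in $\hat\Sigma$, which is always achievable once the $(x_j,y_j)$ are distinct (and one may assume distinctness without loss of generality, discarding repeated points). Second, the chain of equalities
\[
\int c\,d(\alpha_{\sigma(j)}\otimes\beta_j)-\int c\,dP_j
\]
controlled by the pointwise bound requires a common coupling of the $(u_j,v_j)$: taking $(u_1,v_1),\dots,(u_n,v_n)$ independent with $(u_j,v_j)\sim P_j$, one has $u_{\sigma(j)}$ independent of $v_j$ only when $\sigma(j)\neq j$. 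Since fixed points of $\sigma$ contribute zero to $\delta_0$ and need no rerouting, one should state at the outset that $\sigma$ may be assumed to have no fixed points. With these routine fixes the proof is complete and matches the cited argument of \cite{GM}.
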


 \begin{corollary} The support of $\hat{\mu}_{max}$, the
 natural extension of $\mu_{\infty}$ is $c$-cyclically monotone.
 \end{corollary}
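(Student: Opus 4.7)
\bigskip

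\textbf{Proof proposal.} The plan is to deduce the corollary by a direct application of the preceding proposition to the optimal plan identified in Theorem~1.

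First I would recall that Theorem~1 (established as the earlier Proposition in the transport section) identifies $\hat{\mu}_{max}$ as the minimizer of the Kantorovich problem associated to the cost $-W$. Equivalently, it is the minimizer for $-W+\gamma$, which is a \emph{continuous} function on the compact space $\hat{\Sigma}$. Since $\hat{\mu}_{max}\in\mathcal{K}(\mu_{\infty},\mu_{\infty}^*)$ (its marginals are $\mu_\infty$ and $\mu_\infty^*$ by construction of the natural extension), the preceding proposition applies once we have verified nonnegativity.

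Next I would normalize the cost to satisfy the sign hypothesis: choose a constant $K\geq\sup_{\hat{\Sigma}}W-\gamma$ and set $\tilde{c}(x,y) = -W(x,y)+\gamma+K\geq 0$. Adding the constant $K$ shifts every admissible transport integral by the same amount $K$ (since the marginals are fixed), so the minimizers of $\int\tilde{c}\,d\hat{\eta}$ coincide with those of $\int(-W+\gamma)\,d\hat{\eta}$. Hence $\hat{\mu}_{max}$ is the optimal plan for the continuous nonnegative cost $\tilde{c}$, and the preceding proposition yields that $\mathrm{supp}(\hat{\mu}_{max})$ is $\tilde{c}$-cyclically monotone.

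Finally, I would translate $\tilde{c}$-cyclical monotonicity back to $c$-cyclical monotonicity for the original cost $c(x,y)=I(x)-W(x,y)+\gamma$. On $\mathrm{supp}(\hat{\mu}_{max})$ the first projection lies in $\mathrm{supp}(\mu_\infty)$, where the deviation function $I$ vanishes. For any finite collection of points $(x_j,y_j)$ in the support and any permutation $\sigma$ of the indices, both $I(x_j)$ and $I(x_{\sigma(j)})$ equal $0$; moreover the constants $\gamma$ and $K$ appear on both sides of the defining inequality with the same multiplicity and cancel. Thus the inequality
\[
\sum_{j=1}^{n}\tilde{c}(x_j,y_j)\leq \sum_{j=1}^{n}\tilde{c}(x_{\sigma(j)},y_j)
\]
is equivalent to
\[
\sum_{j=1}^{n}c(x_j,y_j)\leq \sum_{j=1}^{n}c(x_{\sigma(j)},y_j),
\]
proving the corollary. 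The only mild obstacle is the lower semicontinuity (rather than continuity) of the original cost $c$ due to the possible infinite values of $I$; this is precisely circumvented by passing to $\tilde{c}$, which is legitimate because $I\equiv 0$ on the relevant support.
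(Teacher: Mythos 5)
Your proof is correct and follows the paper's intended route for the corollary: appeal to the preceding proposition (cyclical monotonicity of optimal-plan supports, Theorem~2.3 of [GM]) applied to $\hat\mu_{max}$, which Theorem~1 identifies as the optimizer for $-W$. You are in fact somewhat more careful than the paper in checking the hypotheses of that proposition --- continuity and nonnegativity of the cost --- by passing to the shifted continuous cost $-W+\gamma+K$ and then observing that $I\equiv 0$ on $\operatorname{supp}\mu_\infty$ and the constants drop out of the cyclic inequality, so $\tilde c$-cyclical monotonicity of the support coincides with $c$-cyclical monotonicity. The paper handles the lower-semicontinuity issue differently: rather than normalize, it follows the corollary with a separate self-contained theorem showing $\Delta_c\le 0$ directly on the support via the auxiliary quantity $b(x,y)=I(x)+\gamma-W(x,y)+V(x)+V^*(y)$ and its nonnegativity; your normalization argument is a cleaner way to make the corollary-from-proposition step airtight.
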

\vspace{0.3cm}
We will present bellow in the next theorem a direct proof of this fact.
\vspace{0.3cm}

\begin{definition}
A function $f: \Sigma \to \mathbb{R} \cup \{\infty\}$
is $c$-concave, if there exist a set $A\subset \Sigma \times \mathbb{R}$
such that
$$ f(y) = \sup_{(x,\lambda) \in A} \{ c(x,y) + \lambda\}$$
\end{definition}

\begin{definition}
A function $f: X \to \mathbb{R} \cup \{\infty\}$
is $c$-convex, if $(-f)$ is $c$-concave.
\end{definition}



\begin{definition}
Given $x\in \Sigma$, the set $\hat{\partial}_c\, f(x)$ is the set of  $y\in\hat{\Sigma}$ such that, for
all $z\in \Sigma$ we have

$$f(z)-f(x) \leq c(z, y) - c(x,y )$$

In this case we say $y$ is a $c$-sub-derivative for $f$ in $x$.

\end{definition}

An important problem is to know, for a certain given $x$, if the $\hat{\partial}_c\, f(x)$ has cardinality $1$.



\begin{proposition} (see Theorem 2.7 in \cite{GM}, Lemma 2.1 in
\cite{R} and section 4 in \cite{Ra}). For $S\subset \hat{\Sigma}$ to be $c$-cyclically monotone, it is
necessary and sufficient that $S \subset \hat{\partial}_c (f)(x) =\{(x,y)\,|\, f(z) - f(x) \leq c(z,y) -
c(x,y)\, ,\, \forall z \in X \}$, for
some $c$ concave $f$, where $f : \Sigma \to \mathbb{R}\cup\{\infty\}$.

Moreover: $f$ is defined in the following way: choose
$(x_0,y_0)\in S$, then

$$ f(x)= \, \inf_{n \in \mathbb{N}, \, (x_j,y_j) \in S, \, 1\leq
j\leq n}\,\,[\, (\, c(x, y_n)- c(x_n,y_n)\,) +$$ $$ (\, c(x_n,
y_{n-1})- c(x_{n-1},y_{n-1})\,)+...$$ $$+(\, c(x_2, y_1)-
c(x_1,y_1)\,)+ (\, c(x_1, y_0)- c(x_0,y_0)\,) \,] .$$
\end{proposition}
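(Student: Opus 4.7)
The plan is to split the equivalence into its two directions, and to follow the classical Rockafellar argument adapted to the $c$-concave setting.

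For sufficiency, suppose $S\subset\hat{\partial}_c f$ for some $c$-concave $f$. Given $(x_j,y_j)\in S$ for $j=1,\ldots,n$ and a permutation $\sigma$, I apply the subdifferential inequality at each $(x_j,y_j)$ using the test point $z=x_{\sigma(j)}$, obtaining
$$f(x_{\sigma(j)})-f(x_j)\le c(x_{\sigma(j)},y_j)-c(x_j,y_j).$$
Summing over $j$, the left-hand side collapses to zero since $\sigma$ is a bijection, and rearranging yields $\sum_j c(x_j,y_j)\le\sum_j c(x_{\sigma(j)},y_j)$, which is exactly $c$-cyclical monotonicity of $S$.

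For necessity, I fix a base point $(x_0,y_0)\in S$ and define $f$ by the explicit chain-infimum formula given in the statement. Three things need to be verified. First, $f(x_0)=0$, so in particular $f\not\equiv-\infty$: the trivial chain with $n=1$ and $(x_1,y_1)=(x_0,y_0)$ has chain sum zero, yielding $f(x_0)\le 0$; conversely, any admissible chain $(x_1,y_1),\ldots,(x_n,y_n)$ evaluated at $x_0$ regroups into $\sum_{j=0}^n c(x_{\pi(j)},y_j)-\sum_{j=0}^n c(x_j,y_j)$ under the rotational permutation $\pi(j)=j+1\pmod{n+1}$, and this is nonnegative precisely because $S$ is $c$-cyclically monotone applied to the cycle $(x_0,y_0),\ldots,(x_n,y_n)$. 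Second, $f$ is $c$-concave as a pointwise infimum of functions of the form $x\mapsto c(x,y_n)+\mathrm{const}$, up to the paper's $\sup$-vs-$\inf$ convention, which is cosmetic. Third, every $(x,y)\in S$ satisfies $y\in\hat{\partial}_c f(x)$: given any test point $z$ and any admissible chain $(x_1,y_1),\ldots,(x_n,y_n)$ for $f(x)$, I append $(x_{n+1},y_{n+1})=(x,y)\in S$ to obtain an admissible chain for $f(z)$ whose sum equals the original chain sum plus $c(z,y)-c(x,y)$; taking the infimum over chains yields $f(z)-f(x)\le c(z,y)-c(x,y)$, as required.

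The main obstacle is the finiteness of $f$, which is the sole place where $c$-cyclical monotonicity is essentially used: without the hypothesis, the infimum over arbitrarily long chains can collapse to $-\infty$ and the whole construction degenerates. A secondary annoyance is reconciling the paper's sign convention for $c$-concavity (defined there as a supremum of $c(x,\cdot)+\lambda$) with the chain-infimum formula; this is harmless since $-f$ would be $(-c)$-concave in the supremum sense, and does not affect the substance of the argument. No dynamical input from $\hat{\sigma}$ or from the particular structure of $\hat{\Sigma}$ is used; the statement is purely functional-analytic and transfers verbatim to the circle and interval settings discussed earlier in the paper.
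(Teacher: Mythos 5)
The paper does not prove this proposition; it cites Theorem 2.7 of \cite{GM}, Lemma 2.1 of \cite{R}, and section 4 of \cite{Ra} and uses the statement as a black box. Your argument is the standard Rockafellar-type proof adapted to the $c$-concave setting, which is exactly what those references give, so there is no genuine divergence of approach to report. The sufficiency step (telescope the subdifferential inequality along the permutation) and the necessity step (chain-infimum construction, verify $f(x_0)=0$ via cyclical monotonicity on the rotated cycle, then append $(x,y)\in S$ to any chain to get the subdifferential inequality) are both correct as you wrote them. Your observation about the paper's $\sup$-versus-$\inf$ convention for $c$-concavity is well taken: the paper's Definition of $c$-concave via a supremum is inconsistent with the $\hat{\partial}_c$ inequality it pairs it with, and the infimum form you use is the one that actually makes $\hat{\partial}_c f$ behave as intended; flagging this rather than silently gliding past it is the right call. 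One small point worth being explicit about, since the paper's cost $c = I - W + \gamma$ can take the value $+\infty$: the chain-infimum is taken over chains whose summands are finite, and the proposition is really being applied to $S = \operatorname{supp}\hat{\mu}_{max}$, on which $I \equiv 0$, so the finiteness issue does not bite; but a reader of your proof would benefit from a sentence saying so.
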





We assume, without lost of generality that $m(A)=0$.
\vspace{0.2cm}

Note that if $S\subset \hat{\Sigma}$ is a graph, then for each $x\in \Sigma$ in the $x$-projection of $S$, we have that $\hat{\partial}_c (f)(x)$ has cardinality $1$.
\vspace{0.2cm}

Consider fixed $(x_0,y_0) ,(x_1,y_1)  $ in the  support of
$\hat{\mu}_{max}  $ and  $(x_0,y_1) ,(x_1,y_0) \in \hat{\Sigma}$.

Given a function $f(x,y)$ we denote
\begin{equation} \Delta_f \,((x_0,y_1) ,(x_1,y_0))\,=\,\,(\, f(x_0,y_0) +  f(x_1,y_1) )- (\, f(x_0,y_1)
+f(x_1,y_0)\,).
\end{equation}

Denote
\begin{equation}
b(x,y) = I(x) + \gamma -W(x,y) + V(x) + V^*(y).
\end{equation}

The $c$-cyclically monotone condition for the support of
$\hat{\mu}_{max} $ will follow from the claim

\begin{equation} \Delta_c \,((x_0,y_1) ,(x_1,y_0))\,=\,\,(\, c(x_0,y_0) +  c(x_1,y_1) )- (\, c(x_0,y_1)
+c(x_1,y_0)\,)\leq 0.
\end{equation}

This is so because any permutation of letters can be obtained by a
series of composition of transformations that exchange just two
letters.

It will follow from the proof bellow that $\Delta_c
 \circ\sigma=\Delta_c$

The next result does not assume a global assumption on twist condition for $c$.

\medskip

\begin{theorem} \label{mai}
\,Given $A:\Sigma \to \mathbb{R}$ Holder, then  $        c(x,y)\, = I(x) - W(x,y) + \gamma      \geq 0$, for all $(x,y)\in \Sigma$.
Moreover, for $(x_0,y_0) ,(x_1,y_1) $ in the support of
$\hat{\mu}_{max} $,  we have $ \Delta_c\leq 0.$
Therefore, the support of
$\hat{\mu}_{max} $ is $c$-cyclically monotone. In other words, the the twist condition for $c$ (or, for $W$) is true when restricted to the support of the maximizing probability $\hat{\mu}_{max}$.

\end{theorem}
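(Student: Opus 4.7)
The plan is to reduce the theorem to an easy non-negativity/vanishing check on the auxiliary function $b(x,y) = I(x) + \gamma - W(x,y) + V(x) + V^*(y) = c(x,y) + V(x) + V^*(y)$ already introduced in the excerpt. First I will establish that $b \geq 0$ on all of $\hat{\Sigma}$. This is immediate from Proposition 7 (the admissibility of $(-V, -V^*)$): the identity $V^*(y) = \sup_x\{-V(x) + W(x,y) - \gamma - I(x)\}$ gives the pointwise bound $V^*(y) \geq -V(x) + W(x,y) - \gamma - I(x)$, which rearranges to exactly $b(x,y) \geq 0$. The claim $c(x,y) \geq 0$ in the theorem then follows: by Proposition 10(1) of \cite{BLT} (recalled just before Proposition 5 of the excerpt), at any $\langle p^*,p\rangle \in \supp(\hat{\mu}_{max})$ one has $V(p) + V^*(p^*) = W(p,p^*) - \gamma$ with $I(p) = 0$, so $b$ vanishes on $\supp(\hat{\mu}_{max})$; after the shift $V \mapsto V + k$, $V^* \mapsto V^* - k$ chosen so that $\sup(V + V^*) = 0$ (attained on $\supp\hat{\mu}_{max}$), the bound $c \geq -V - V^* \geq 0$ follows.

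The heart of the argument is a telescoping identity: although $b$ and $c$ differ by $V(x) + V^*(y)$, the four-term alternating difference $\Delta$ annihilates every contribution depending on $x$ alone or on $y$ alone. Writing out the four evaluations of $b$ at the corners $(x_0,y_0), (x_1,y_1), (x_0,y_1), (x_1,y_0)$, the terms $I(x_i)$, $V(x_i)$, $V^*(y_j)$ and the constant $\gamma$ all appear once with $+$ and once with $-$, so they cancel; what remains is
\[
\Delta_b\bigl((x_0,y_1),(x_1,y_0)\bigr) = -W(x_0,y_0) - W(x_1,y_1) + W(x_0,y_1) + W(x_1,y_0),
\]
which is exactly $\Delta_c\bigl((x_0,y_1),(x_1,y_0)\bigr)$. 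In particular the side remark $\Delta_c \circ \hat{\sigma} = \Delta_c$ in the excerpt is explained in the same way: coboundary contributions that depend on one coordinate only telescope away.

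Combining the two previous observations yields $\Delta_c \leq 0$: for $(x_0,y_0),(x_1,y_1) \in \supp\hat{\mu}_{max}$, step one gives $b(x_0,y_0) = b(x_1,y_1) = 0$, whereas $b(x_0,y_1), b(x_1,y_0) \geq 0$ from global non-negativity of $b$. Hence
\[
\Delta_c = \Delta_b = 0 + 0 - b(x_0,y_1) - b(x_1,y_0) \leq 0,
\]
which is the second claim of the theorem.

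To conclude that $\supp\hat{\mu}_{max}$ is $c$-cyclically monotone, I will invoke the standard reduction that any permutation of $n$ letters factors as a product of transpositions, and each single transposition applied to $\sum_j c(x_j,y_j)$ modifies the sum by precisely a two-point $\Delta_c$ expression of the form just controlled. Iterating the inequality $\Delta_c \leq 0$ along the factorization delivers $\sum_j c(x_j,y_j) \leq \sum_j c(x_{\sigma(j)},y_j)$ for every permutation $\sigma$, as required. The main technical subtlety I anticipate is not the cancellation itself (which is purely algebraic) but cleanly arranging the normalization so that $c \geq 0$ literally holds rather than only $c + V + V^* \geq 0$; the transport-theoretic content of the theorem and the cyclic monotonicity conclusion, however, need only $b \geq 0$ globally and $b = 0$ on $\supp\hat{\mu}_{max}$, both of which are direct consequences of Propositions 7 and 5 already in hand.
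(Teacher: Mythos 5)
Your overall strategy is sound and parallels the paper's own, but with one notable shortcut and one genuine gap.

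The substantive chain — \( b \ge 0 \) globally, \( b = 0 \) on \(\supp\hat\mu_{max}\), \(\Delta_c = \Delta_b\) by cancellation of the single-variable terms, hence \(\Delta_c = -b(x_0,y_1) - b(x_1,y_0) \le 0\), and then the reduction of arbitrary permutations to transpositions — is correct and is exactly the paper's argument. The one place where you take a genuinely different route is in establishing \( b \ge 0 \): you invoke Proposition 7 (the admissibility identity \( V^*(y) = \sup_x\{-V(x)+W(x,y)-\gamma-I(x)\} \), which in the paper is derived via Varadhan's lemma), whereas the paper gives a self-contained dynamical proof. The paper introduces \( R_n(x,y) = I_n(x,y) + S(x,y) - S\circ\hat\sigma^n(x,y) \) with \( S = \gamma + V + V^* - W \), observes that each telescoped increment equals \( [V^*\circ\sigma^* - V^* - A^*] \) evaluated along the orbit (which is nonnegative because \( V^* \) is a subaction for \( A^* \)), so \( R_n \ge 0 \), and passes to the limit \( R_n \to b \) using that \( \hat\sigma^n(x,y) \) accumulates on \(\supp\hat\mu_{max}\). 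Your route is shorter but rests on the heavier Varadhan-type machinery already packaged in Proposition 7; the paper's route is longer but uses only the subaction inequalities directly.

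The gap is in your justification of the clause \( c \ge 0 \). You write that after the shift \( V \mapsto V + k \), \( V^* \mapsto V^* - k \) chosen so that \(\sup(V+V^*) = 0\), the bound \( c \ge -V - V^* \ge 0 \) follows. But that simultaneous shift leaves \( V(x) + V^*(y) \) pointwise unchanged (the \( +k \) and \( -k \) cancel), so it cannot alter \(\sup(V+V^*)\); the shift is a no-op, and the inequality \( V + V^* \le 0 \) is never actually established. (Shifting only one of the two functions would change \( V + V^* \), but then the pair would cease to be \(c\)-admissible and \( b \ge 0 \) would no longer be guaranteed.) Note that the paper's own proof of this theorem in fact only establishes \( b \ge 0 \) and \(\Delta_c = \Delta_b \le 0\); it does not separately verify \( c \ge 0 \), and the cyclic-monotonicity conclusion uses only \( b \). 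So the deficiency is shared with the source, but your attempted fix does not work as written and should be either removed or replaced by a genuine argument that \( V + V^* \le 0 \).
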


{\bf Proof:} First we point out that $\Delta_c=\Delta_b$. We will
show that under our hypothesis is true that  $\Delta_b\leq 0$

First note that
$$ [\, V^* \circ \hat{\sigma}^{-1} - V^* - A^*\,]  \, \hat{\sigma}\, (x,y)=
[V^* - V^* \circ \hat{\sigma}-A - W + W \circ \hat{\sigma}] \,
(x,y)=$$
$$ [ \gamma+ V (x) + V^* (y) - W(x,y)] \, + \, [V\circ
\hat{\sigma}\,-V \, -\, A ] (x,y)\,  -$$
$$ [\gamma + V \circ \hat{\sigma} + V^*  \circ \hat{\sigma} - W \circ
\hat{\sigma}]\, (x,y).$$

 Remember (see \cite{BLT}) that
$$I(x) =\sum_{n=0}^\infty \, [ V\, \circ \,\sigma - V - A]\,
\hat{\sigma}^n\, (x,y)
$$

We denote
$$I_n (x,y) =\sum_{k=0}^{n-1} \, [ V\, \circ \,\sigma - V - A]\, \circ \,
\hat{\sigma}^k\, (x,y) = I_n (x),
$$
and

$$R_n\,(x,y) = \,I_n (x,y)\, + \, [ \,\gamma + V(x) + V^* (y)  - W(x,y)\,]\,-$$
$$ [\, \gamma +  V + V^*   - W\,]\,\hat{\sigma}^n\, (x,y).  $$

We claim that if $(x,y)$ is in the support of $\hat{\mu}_{max} $,
then $b(x,y)=0.$ Moreover, for all $(x,y) \in \Sigma$, we have
$b(x,y)\geq 0.$

One can prove this result by means of Varadhan's Integral Lemma
(\cite{DZ}) with  the same reasoning as in the last proposition of the previous section. We will give bellow a direct proof of the
claim.

Either $I(x)=\infty$, and the claim is trivially  true or $I(x)$ is
finite. In this case, any accumulation point of $\hat{\sigma}^n
(x,y)$ will be in the support of $\hat{\mu}_{max} $.

Moreover, $ b(x,y)=R(x,y) =\lim_{n\to \infty} \, R_n (x,y)\geq 0.$

As in the support of  $\hat{\mu}_{max} $, we have that $R(x,y)=0$,
then, $b(x,y)=0$.

In any case $ R(x,y)\geq 0.$ This shows the claim.

We point out that $\Delta_c = \Delta_b =\Delta_W$ in the case $I(x)$
is finite.

We also remark that if $ (x_0,y_0) $ is in support of
$\hat{\mu}_{max} $, then as $R(x_0,y_0)$ is zero, it follows that  $R(x_0,y)$ is
finite . This is so because $(x_0,y)$ is in the stable manifold of
$(x_0,y_0)$ and
$$ R_n ( x_0 ,y) - R_ n (x_0,y_0)\, =\,$$
$$
\sum_{k=1}^n \,\{\, [ V^* \circ \hat{\sigma}^{-1} - V^* - A^* ]
\hat{\sigma}^k (x_0,y)-   [V^* \circ \hat{\sigma}^{-1} - V^* - A^* ]
\hat{\sigma}^k (x_0,y_0)\,\} $$

Finally, if $(x_0,y_0)$ and $(x_1,y_1)$ are both in the support of
$\hat{\mu}_{max} $, then $R(x_0,y_1)< \infty$, $R(x_1,y_0)< \infty $
and $I(x_0)=0=I(x_1)$.

In this case, for any $(x,y)$ of the form
$(x_0,y_0),(x_1,y_1),(x_1,y_0)$, or $(x_0,y_1)$
$$ R(x,y)= I(x,y) + [\gamma  + V + V^* - W] (x,y)= b(x,y).$$

As we know that $R$ is non-negative, then
$$ [b(x_0,y_0)+ b(x_1,y_1)]\,- \,[b(x_1,y_0)+b(x_0,y_1)]\,=\,0\,-\,
[b(x_1,y_0)+b(x_0,y_1)]\leq 0.$$

This shows that $\Delta_b\leq 0.$

\qed

We  did not use the twist condition above.
\vspace{0.2cm}

Note that we could alternatively consider the function $g:\Sigma \to \mathbb{R}$ defined in the following way: choose
$(x_0,y_0)\in S$, then

$$ g(x)= \, \inf_{n \in \mathbb{N}, \, (x_j,y_j) \in S, \, 1\leq
j\leq n}\,\,[\, (\, W(x, y_n)- W(x_n,y_n)\,) +$$ $$ (\, W(x_n,
y_{n-1})- W(x_{n-1},y_{n-1})\,)+...$$ $$+(\, W(x_2, y_1)-
W(x_1,y_1)\,)+ (\, W(x_1, y_0)- W(x_0,y_0)\,) \,] ,$$
which has the advantage of just taking into account a continuous function $W$.

The graph property for $S=$ support of $\hat{\mu}$,  and all kinds of different considerations can be obtained from such $g$.

We want to show now that if $W$ satisfies the twist condition and the maximizing probability for $A$ is unique, then the support of $\hat{\mu}$ on $\hat{\Sigma}$ is a graph. Our proof works for the Bernoully space $\{0,1,2,..,d\}^\mathbb{N}$ as  well for the interval $[0,1]$ (considering $T$ either conjugated to $2 x $ (mod 1) or to $-2x $ (mod 1)).

Consider the cost $c(x,y)= I(x) - W(x,y) - \gamma$, and a subset $S \subset X \times Y$ $c$-cyclically monotone.

\begin{lemma} \label{characterize} Suppose the $c$ satisfies the twist condition and
let $S$ be a $c$-cyclically monotone subset, if $(a,b), (a',b') \in S$ and $a \neq a'$ and $b \neq b'$, then  $a < a'$ and $b > b'$, or  $a > a'$ and $b < b'$.
\end{lemma}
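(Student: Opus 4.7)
The plan is to argue by contradiction, using the definition of $c$-cyclical monotonicity applied to just two points together with a transposition permutation; the key observation is that the $I(x)$ and constant $\gamma$ terms cancel out, so the inequality involves only $W$ and directly collides with the twist condition.

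First I would suppose, aiming at a contradiction, that $(a,b),(a',b')\in S$ with $a\neq a'$, $b\neq b'$, and $a<a'$, $b<b'$ (the case $a>a'$, $b>b'$ being symmetric, after interchanging the roles of the two points). Since $S$ is $c$-cyclically monotone, applying the definition with $n=2$ and the nontrivial transposition gives
\[
c(a,b)+c(a',b')\ \leq\ c(a,b')+c(a',b).
\]

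Next I would expand using $c(x,y)=I(x)-W(x,y)+\gamma$. Each side contains the same contribution $I(a)+I(a')+2\gamma$, so after cancellation the inequality reduces to
\[
-W(a,b)-W(a',b')\ \leq\ -W(a,b')-W(a',b),
\]
equivalently
\[
W(a,b')+W(a',b)\ \leq\ W(a,b)+W(a',b').
\]
But the twist hypothesis on $W$, applied to the pair $a<a'$, $b<b'$, asserts exactly the strict reverse inequality
\[
W(a,b)+W(a',b')\ <\ W(a,b')+W(a',b),
\]
which is the desired contradiction. Hence the configuration $a<a'$ and $b<b'$ is impossible on $S$, and by the symmetric argument neither is $a>a'$ and $b>b'$. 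This leaves only the two remaining possibilities in the statement, proving the lemma.

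The argument is almost entirely formal; there is no real obstacle, the only thing to watch is the sign bookkeeping when expanding $c$, together with the remark that the additive terms $I(a)$, $I(a')$, $\gamma$ appear on both sides of the cyclic monotonicity inequality and therefore drop out, so the (in principle only lower semi-continuous and possibly infinite) function $I$ plays no role here. This makes it clear that the twist property of $W$ alone governs the order structure of the support, exactly the feature that will next be exploited to upgrade $c$-cyclical monotonicity of $\mathrm{supp}(\hat\mu)$ into the graph property.
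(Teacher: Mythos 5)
Your proof is correct and is essentially identical to the paper's: both derive a contradiction by pitting the $c$-cyclical monotonicity inequality for the two-point transposition against the twist condition on $W$, which (because $W$ enters $c$ with a minus sign, and $I$, $\gamma$ cancel) yields exactly the reverse strict inequality. Your version merely makes the cancellation of $I(a)+I(a')+2\gamma$ explicit, which the paper leaves implicit when it asserts the twist inequality directly in terms of $c$.
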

\begin{proof}
Indeed, suppose $a < a'$ then,  if $b < b'$,  the twist condition on $W$ implies that
$$c(a,b)+ c(a',b') > c(a,b')+ c(a',b).$$
On the other hand, $S$ is $c$-cyclically monotone subset, so
$$c(a,b)+ c(a',b') \leq c(a,b')+ c(a',b),$$
that is an absurd.
\end{proof}

A similar property is true for $W$.

This Lemma means that the correct figure associated to a pair of points in $S$ is given by:
\begin{center}
\includegraphics[scale=0.6,angle=0]{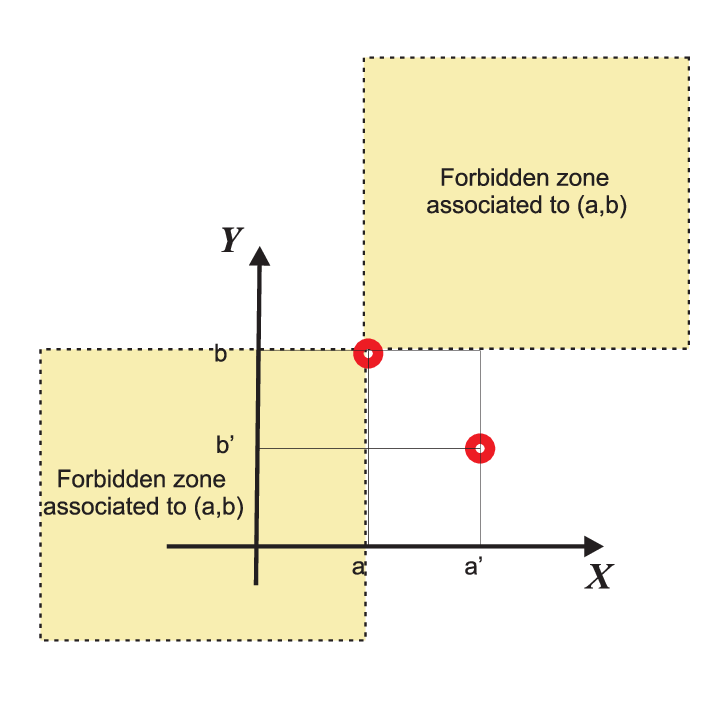}\\
\small{Characterization of $S$}  \\

\end{center}

We point out that, in principle, could exist points $z$ of $S$ in the vertical fiber passing by $a$ or in the
horizontal fiber passing by $b$.

Now we will show Theorem 3.

\begin{theorem}\label{GraphTheorem} (Graph Theorem) Suppose the $W$-kernel satisfies the twist condition and
let $\hat{\mu}$ be the $c$-minimizing measure of probability to the transport problem, then $S=\supp\,\, \hat{\mu}$ is a graph in $x$ (up to an orbit of measure zero), moreover this graph is monotone not increasing.
\end{theorem}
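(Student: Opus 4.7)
The plan is to reduce the statement to an analysis of the ``bad set''
$D:=\{x\in\Sigma : \#\bigl( (\{x\}\times\Sigma^*)\cap S\bigr)\ge 2\}$
and to show that, under the twist hypothesis, $D$ is either empty or captured by a single $\sigma$-orbit of $\mu_\infty$-measure zero. The monotone non-increasing assertion will fall out as a byproduct. Throughout, I will use that $S=\supp\hat\mu$ is $c$-cyclically monotone (proved in the preceding theorem), so Lemma \ref{characterize} applies to any two points of $S$.

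First I would record the direct structural consequence of Lemma \ref{characterize}: whenever $(a,b),(a',b')\in S$ with $a\ne a'$ and $b\ne b'$, we have $a<a' \iff b>b'$. This already yields the ``monotone not increasing'' part of the statement at any two points of $S$ whose $x$-coordinates differ. It also shows that the only way the graph property can fail is via the vertical-fiber set $D$. For each $x\in D$, choose $y^-(x)<y^+(x)$ with $(x,y^\pm(x))\in S$. Applying Lemma \ref{characterize} to the pairs $(x,y^\pm(x))$ and an arbitrary $(a',b')\in S$ with $a'\ne x$ gives: if $a'>x$ then $b'<y^-(x)$, and if $a'<x$ then $b'>y^+(x)$. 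Consequently the open interval $(y^-(x),y^+(x))\subset\Sigma^*$ is disjoint from every other fiber of $S$, and if $x_1<x_2$ are both in $D$ their gap intervals are disjoint and appear in the opposite (lexicographic) order. Since $\Sigma^*$ admits only countably many disjoint open lex-intervals, $D$ is at most countable.

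Next I would establish dynamical invariance of $D$. If $x\in D$ with witnesses $(x,y^\pm)\in S$, applying $\hat\sigma$ gives $(\sigma(x),y^\pm\cdot x_0)\in S$, and these two images are still distinct, so $\sigma(x)\in D$; hence $D$ is forward $\sigma$-invariant. Uniqueness of $\mu_\infty$ implies ergodicity, so $\mu_\infty(D)\in\{0,1\}$. If $\mu_\infty(D)=0$ the graph property holds $\mu_\infty$-a.e.\ and we are done. In the remaining case $\mu_\infty(D)=1$ with $D$ countable, ergodicity forces $\mu_\infty$ to be supported on a single periodic orbit contained in $D$; this is precisely the ``exceptional orbit'' allowed in the theorem statement, and a direct check (using periodicity and the anti-monotone chain of gaps along that finite orbit) handles it.

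The main obstacle I expect is the last paragraph's uniqueness of the exceptional orbit: showing that in fact \emph{all} elements of $D$ lie on a single $\sigma$-orbit, so that what is left out of the graph is genuinely only one orbit rather than several. My plan there is to use backward dynamics: for $x\in D$ with $y^-_0=y^+_0$, the preimage under $\hat\sigma^{-1}$, namely $(\tau_{y^\pm}(x),\sigma^*(y^\pm))$, again sits in the same vertical fiber, so $\tau_{y^-}(x)\in D$; while in the alternate case $y^-_0\ne y^+_0$ the two preimages have distinct $x$-coordinates and the anti-monotone rule forces $\sigma^*(y^-)$ and $\sigma^*(y^+)$ into a new gap configuration. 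Iterating $\hat\sigma^{\pm 1}$ and exploiting that all gaps are totally ordered inversely to their $x$'s, one traps two putatively distinct bad orbits into incompatible orderings of their gap sequences, yielding a contradiction with the anti-monotone rigidity provided by the twist condition.
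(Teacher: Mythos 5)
Your route is genuinely different from the paper's, and it is essentially sound. The paper's proof splits into a periodic case (handled by the fact that $\hat\sigma$ \emph{contracts} vertical fibers, so a positive minimal gap over a finite orbit cannot persist under iteration) and a non-periodic case (a geometric ``forbidden region'' argument showing that the non-graph set projects into at most one $\sigma$-orbit, combined with an ergodicity argument for $\hat\mu$). You instead prove the weaker but sufficient fact that the bad set $D$ is countable -- because the twist-induced gap intervals $[y^-(x),y^+(x)]$ are pairwise disjoint up to a shared endpoint -- then use forward invariance of $D$ and ergodicity of $\mu_\infty$ itself (rather than of $\hat\mu$) to force $\mu_\infty(D)\in\{0,1\}$; and when $\mu_\infty(D)=1$ you obtain an atom, hence a periodic-orbit measure, and the anti-monotone chain of $n$ disjoint gaps over that orbit exhibits at least $n+1$ distinct points in $\supp\,\mu^*_\infty$, a set of cardinality $n$, giving a contradiction. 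This unifies the paper's two cases into a single measure-theoretic counting argument and replaces both the contraction lemma and the forbidden-region picture.

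Two points need repair. First, the countability of $D$: your justification (``$\Sigma^*$ admits only countably many disjoint open lex-intervals'') is true of \emph{nonempty} ones, but the open interval $(y^-(x),y^+(x))\subset\Sigma^*$ can be empty even though $y^-(x)<y^+(x)$, namely when the pair is adjacent in the lexicographic order, i.e.\ of the form $(w\,0\,1\,1\,1\dots,\ w\,1\,0\,0\,0\dots)$ for some finite word $w$. Such adjacent pairs are themselves only countably many, and the disjointness of the closed gap intervals forces distinct points of $D$ to use distinct pairs, so $D$ is still countable -- but you must treat both subcases explicitly, as the ``contains a cylinder'' argument only applies to the nonempty gaps. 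Second, the ``main obstacle'' you flag (collapsing $D$ to a single $\sigma$-orbit by backward dynamics) is not actually needed: your argument already shows $\mu_\infty(D)=0$, which gives the graph property $\mu_\infty$-almost everywhere, and that is the operative content of the theorem. Relatedly, the phrasing in the $\mu_\infty(D)=1$ branch (``this is precisely the exceptional orbit allowed'') is off: a full-measure $D$ is not the allowed null exceptional orbit, and that branch must be \emph{eliminated} -- which is exactly what your anti-monotone-chain counting over the finite orbit does.
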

\begin{proof}

In order to get advantage of the geometrical and combinatorial arguments we will present pictures for the case of a transformation $T:[0,1]\to[0,1]$, given by $T(x)= 2\, x$ (mod 1).

Define $v^{+}(x)=\max\{y | (x,y) \in S\}$ and $v^{-}(x)=\min\{y | (x,y) \in S\}$.  In order to prove that $\supp\,\, \hat{\mu}$ is a graph we need to prove that $v^{-}(x)=v^{+}(x)$ for any $x$ in the support of $\mu_\infty$.

We say a point $(x,y)$ in the support of $\hat{\mu} $ is non-graph, if there exist another point of the form $(x,z)$,
in the support of  $\hat{\mu} $, and such that $z\neq y$.

Note that the image of two points in the support of $\hat{\mu}$ on the fiber over $x$ will go on two different points  in the support of $\hat{\mu}$ on the fiber over $\sigma(x)$. That is, the forward image by $\hat{\sigma}^n$ of non-graph points will go on  non-graph points. This maybe can not be true for backward images by $\hat{\sigma}^n$.

Suppose
the support of the maximizing probability $\mu_\infty$
(unique) is a periodic orbit. If $S$ is not a graph, then $v^{-}(x)<v^{+}(x)$ for some $x$.
As the transformation $\hat{\sigma}$ contracts each  fiber  by forward iteration, we have that, the image
of the interval fiber from $(x,v^{-}(x))$ to $(x,v^{+}(x))$, by a finite iterate of  $\hat{\sigma}$, goes  inside the fiber $(x,v^{-}(x))$ to $(x,v^{+}(x))$. Therefore, $\sigma^*$ has  a periodic point in the support of $\mu_\infty^*.$
If the maximizing probability $ \mu_\infty$ is unique for $A$, then  $\mu_\infty^*$
is unique for the maximization problem for $A^*$.  In this case the support of  $\mu_\infty^*$ is this periodic orbit. Therefore, there is a minimal distance (in vertical fiber) between non-graph points and this is in contradiction with the contraction on vertical fibers. The conclusion is that $S$  is a graph if the support of the maximizing probability $\mu_\infty$
is a periodic orbit.

{\bf Remark 4:}
In the case of the shift, if $\supp \mu_{\infty}$ is a periodic orbit, one can easily  show that if  $$\supp \mu_{\infty}=\text{the orbit by $\sigma$ of} \,\,( a_0, a_{1}, ..., a_{(n-1)}, a_{0}, ...)$$  then $$\supp \mu_{\infty}^{*} = \text{orbit by $\sigma^*$ of}\,\,(a_{(n-1)},... , a_2, a_1, a_0, a_{(n-1)},...).$$

\begin{center}
\includegraphics[scale=0.6]{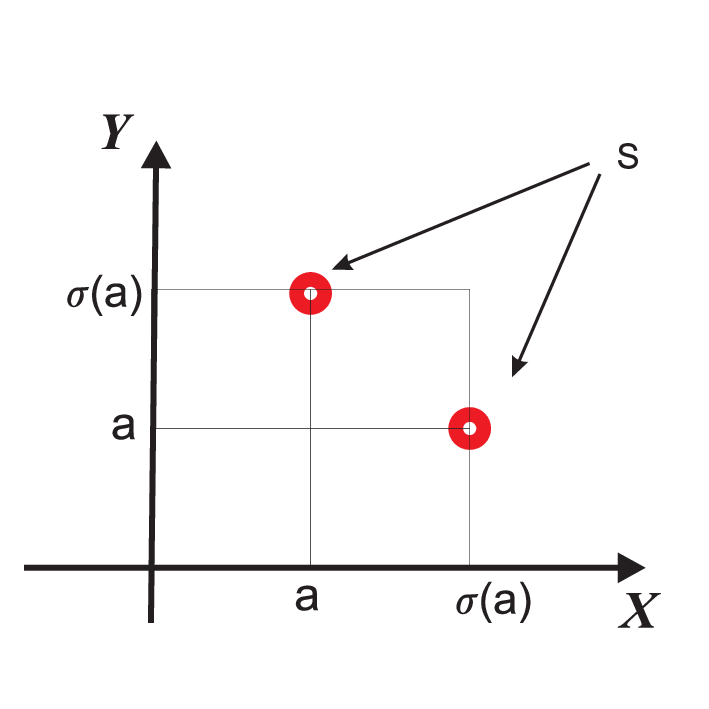}\\
\small{Support of $\hat{\mu}$ in the periodic case.}
\end{center}

We suppose from now on that the support of the maximizing probability $\mu_\infty$
is not a periodic orbit.

\begin{center}
\includegraphics[scale=0.7,angle=0]{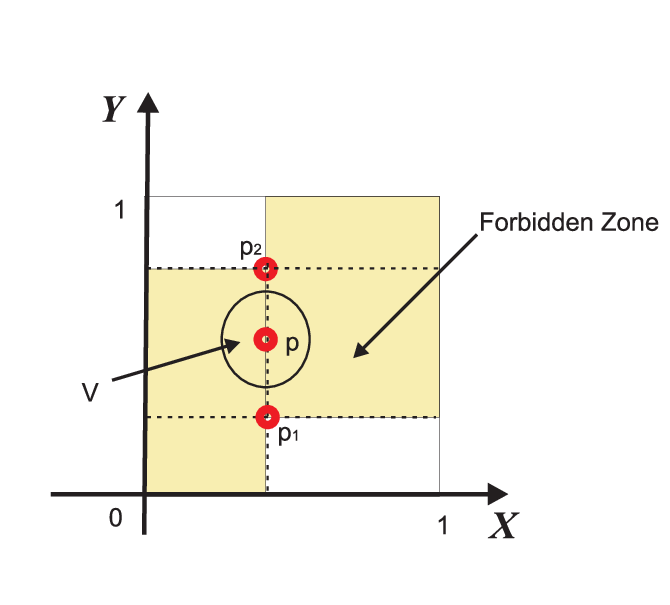}\\
\small{Characterization of $S$}  \\
\end{center}

Suppose, that $v^{-}(x) < v^{+}(x)$ for some $x$, then we claim that  there is no other point in support of $\hat{\mu}$  in the fiber by $x$  between
$p_1=v^{-}(x) $ and $p_2= v^{+}(x)$. Indeed, from the above picture we see that if there exists a point $(x,p)$ in the support of $\hat{\mu}$
such that  $v^{-}(x)= p_1<p< p_2=  v^{+}(x)$, then, as $\hat{\mu}$ is ergodic,  should exist a point $(q_1,q_2)$
in a small neighborhood $V$ of $(x,p)$ such that returns by a forward $n$-iterate by $\hat{\sigma}$ to $V$.

This iterate has to return to the fiber, and  this contradicts the fact that the support of the maximizing probability $\mu_\infty$
is not a periodic orbit.

If the support of $\mu_\infty$ is not a periodic orbit,  then we claim that there does not exist two pairs
$(x_1,y_1), (x_1,z_1)$ and  $(x_2,y_2), (x_2,z_2)$, in the support of $\hat{\mu}$, such that, the orbits by $\sigma$ of $x_1$ and $x_2$  are different.

In order to simplify the argument and notation  we consider bellow $T^*(x)= 2 x$ (mod 1), but we point out the
reasoning apply to any expanding transformation of degree $d$.
Given $y_n$ and $z_n$, $n=1,2$, there exists a rational point of the form $s_n=\frac{q}{2^k}$, with  $0<q < 2^k,$ $q,k\in \mathbb{N}$,
such that $y_n  < s_n < z_n$, $n=1,2$. Consider the $s_n$ determined by the smallest possible value $k$.

The pair of points $ \hat{T}^{-r} (x_n,y_n)$ and $\hat{T}^{-r}  (x_n,z_n)$, $r\geq 0$,  determine non-graph points in the same fiber,
for any $r>0$, until time $r=k$. In time $r=k-1$,  it happens  for the first time that the horizontal fiber through $1/2$ cuts the vertical segment connecting $ \hat{T}^{-(k-1)} (x_n,y_n)$ and $\hat{T}^{-(k-1)} (x_n,z_n)$.

In this way, for each $n$, we get a  horizontal forbidden region $A_n$ (a horizontal strip from one vertical side to the other vertical side of $[0,1]\times [0,1]$) determined by such pair $ \hat{T}^{k-1} (x_n,y_n)$ and $\hat{T}^{-(k-1)} (x_n,z_n)$,  $n=1,2$, which contains  the horizontal fiber through $1/2$ .

If we apply  the argument for $n=1$, then the next forbidden region $A_2$ for $n=2$ will  contain the previous one $A_1$.  Moreover, considering the full forbidden region determined by these  two pair of points we reach a contradiction.

In the picture bellow we show the final pair of points $q_1$ and $q_2$ in a $\hat{\sigma}$-orbit (in the same vertical fiber) which has the property that its images $p_1$ and $p_2$ are on different sides of the upper and down rectangles. The images of $p_1$ and $p_2$ by  $\hat{\sigma}$ are not anymore in the same vertical fiber (neither their future iterates). There is no room for getting a different pair of $p_1$ and $p_2$ like this (because of the forbidden region) .

In this way, form above, we get that could exist just one orbit of $x$ by $\sigma$  such that over the fiber over $x$ there is two points in the support. That is, the projection $K\subset \Sigma$ on the $x$-axis  of the non-graph points have to be the orbit of a single point $x$. Therefore, $\mu_\infty (K) = \sum_k \mu_\infty (\{\sigma^k (x)\}).$

We assume first that the set of non-graph points  have probability $1$ and we will  reach a contradiction.  Indeed, $\mu_\infty (\{\sigma^k (x)\})\geq \mu_\infty (\{\sigma^j (x)\})$, for $k\geq j$,  and  the $\mu_\infty$ probability of the set $\{x\}$ is zero or  is positive.

\begin{center}
\includegraphics[scale=0.8,angle=0]{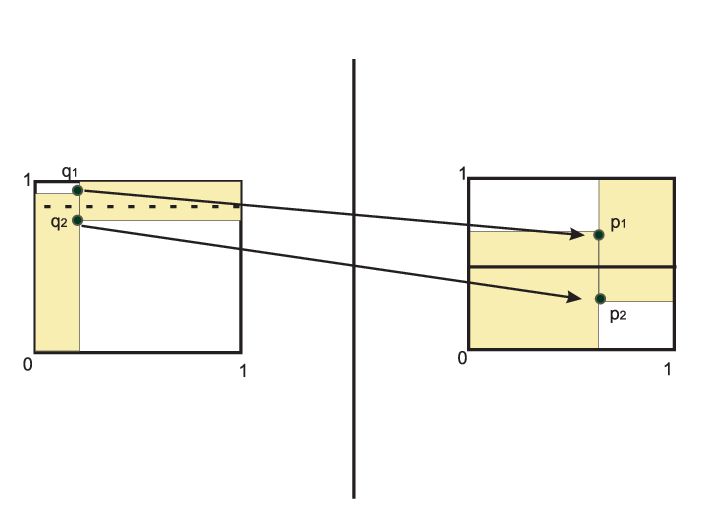}\\
\small{The dynamics on the support}  \\
\end{center}

Remember that the support of $\hat{\mu}$ is invariant by $\hat{\sigma}$.

Now we will show that, indeed, if there exists non-graph points, this set has probability $1$.

Note that if the vertical fiber by $x\in \Sigma$ is such that $v^{-}(x)<v^{+}(x)$, then  $\sigma(x)$ also has this property.
If the transformation $\hat{\sigma}$ we consider preserves orientation in the vertical fiber then the  iterates are in the same order. Otherwise they exchange order. That is, the set of points $(x,y)$ which are not graph point are invariant by forward iteration by $\hat{\sigma}$. Moreover, $\hat{\sigma}$ is a forward contraction in vertical fibers.
Denote by $B=\{\,(x,v^+(x))$ in the support of $\hat{\mu}$ such that $v^{-}(x) < v^+(x)\,\}$.
The set $B$ is the upper part of the non-graph part of the set $S$.

We will show that $\hat{\mu}(B)=0$ or $\hat{\mu}(B)=1$.

We suppose first that $\hat{\sigma}$ preserves  order in the fiber by forward iteration.

Consider $\tilde{B}$ the set $\{\, (x,y)$ in the support of $\hat{\mu}$  such that for some $n\geq 0$ we have  $\hat{\sigma}^n\, (x,y)\in B\,
\}$. Note that as $B$ is forward invariant, once $\hat{\sigma}^n\, (x,y)\in B$, for some fixed $n$, then  $\hat{\sigma}^m\, (x,y)\in B$, for any $m\geq n$.

We will show that  $\hat{\sigma}^{-1} \tilde{B}= \tilde{B}$. The fact that
$\hat{\sigma}^{-1} \tilde{B}\subset  \tilde{B}$ follows easily from the definition of $\tilde{B}$.

Given $x \in \tilde{B}$, there exists $n\geq 0$ such that $\hat{\sigma}^n\, (x,y)\in B$. If $n\geq 1$, then
$\hat{\sigma}^{n-1} \,(\hat{\sigma} (x,y))\,\in B$ and, therefore, $(x,y)\in \hat{\sigma}^{-1} \tilde{B}. $
In the other case $(x,y) \in B$, but then $(\hat{\sigma} (x,y))\in B$, because $\hat{\sigma}$ preserves  order in the fiber, and does not exist  more than two points in the vertical fiber over $\sigma(x)$ which are in $S$.
Therefore, $(x,y) \in \hat{\sigma}^{-1} \tilde{B}$.

As $\hat{\mu}$ is ergodic, then $\hat{\mu} (\tilde{B})=0$ or  $\hat{\mu} (\tilde{B})=1$.

If $\hat{\mu} (\tilde{B})=1$, then take a Birkhoff point $z\in \tilde{B} $  for the ergodic probability $\hat{\mu}$.
Therefore, we get that the asymptotic frequency of visit to the set
$C=\{\,(x,v^-(x))$ in the support of $\hat{\mu}$ such that $v^{-}(x) < v^+(x)\,\}$
(the bellow part of the non-graph part of  set $S$) is zero. Finally, we get that $\hat{\mu} (C)=0$.
In the same way $\hat{\mu} (B)=1$.

If $\hat{\mu} (\tilde{B})=0$, we get that $\hat{\mu} (B)=0$. Now, using a similar argument for the lower part of the non-graph part we get that $\hat{\mu} (C)=1$.

This shows that the $\pi_1$ projection of the non-graph points has probability one and this proves the theorem.

\end{proof}

The above reasoning also applies to $T(x)=-\, 2 x$ (mod 1) and to the shift in the Bernoulli space.

\bigskip
\section{Selection of minimizing sequences}\label{secaogsel}

 In this section we want to exhibit a nice expression for the function $f$ (defined before) such that, the
 set $\{(x, \hat{\partial}_c\, f\,  (x))\,|\, x \in \, \, $ support $ \mu_{\infty}\}$ = support of $\hat{\mu}_{max}$,
 in the case the support of $\hat{\mu}_{max}$ is a periodic orbit. In the end of the section we address briefly the general case.

\begin{definition} We say $c: \hat{\Sigma}=\Sigma\times \Sigma \to \mathbb{R}$ upper semicontinuous satisfies the twist condition on
$ \hat{\Sigma}$, if  (bellow we just consider values of $c$ which
are finite) for any $(a,b)\in \hat{\Sigma}=\Sigma\times \Sigma $
and $(a',b')\in\Sigma\times \Sigma $, with $a'> a$, $b'>b$, we
have
\begin{equation}
c(a,b) + c(a',b')  >  c(a,b') + c(a',b).
\end{equation}
\end{definition}

If $W$ is twist and  $c(x,y) = I(x) - W(x,y) +\gamma $, then $c$
is twist. We assume from now on this property.

\begin{theorem}

Suppose the support of $\hat{\mu}_{max}$ is a periodic orbit.
Choose $(x_0,y_0)$ in such way that $x_0\in \Sigma$ is the smaller
point in the projection and  $y_0\in \hat{\Sigma}$ the smaller on
the fiber over $x_0$. From the above, in this case for any given
$z\in \Sigma$, the $f$ defined before is such that
$$ f(z)= [\,(\, c(z, y_n)- c(x_n,y_n)\,) +$$ $$ (\, c(x_n,
y_{n-1})- c(x_{n-1},y_{n-1})\,)\,+...$$
$$+...+ (\, c(x_3, y_2)- c(x_2,y_2)\,)\,\} \, +$$
 $$ (\, c(x_2, y_1)-
c(x_1,y_1)\,)+ (\, c(x_1, y_0)- c(x_0,y_0)\,) \,\,\,] .,$$

where we use all the possible $x_i$ which are in the support of the maximizing probability for $A$ on the left of $z$, and for each $x_i$ we choose the corresponding $y_i$. In the notation of $f$ above, the last one $(x_n,y_n) = (x_n(z), y_n (z))$ is such that
$(x_n(z), y_n (z))= (x_{k-1}, y_{k-1})$. Which means $n=k-1$.

Moreover,  $x_0<x_1<x_2<...<x_n.$

If $z=x_k$ for some element $x_k$ in the support of $\mu_A$, then, in the notation of $f$ above, if $x_{k-1} < z < x_k$,
then $(x_n,y_n) = (x_n(z), y_n (z))$ is such that $(x_n(x_k), y_n
(x_k))= (x_{k-1}, y_{k-1})$. The case $z=x_k$ is include in the
expression above for $f$. In this case $x_k=x_{n+1}$ following the
above notation. The index of the $x_i$ has no dynamical meaning.
\end{theorem}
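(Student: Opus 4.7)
The plan is to show that the infimum defining $f(z)$ in the Rockafellar-type formula (from the proposition attributed to \cite{GM} and \cite{R}) is attained at the ``canonical chain'' described in the statement. By Theorem~\ref{GraphTheorem} and Lemma~\ref{characterize}, since the support $S=\supp\,\hat{\mu}_{max}$ is a periodic orbit, we may enumerate it as $S=\{(x_0,y_0),(x_1,y_1),\dots,(x_{k-1},y_{k-1})\}$ with $x_0<x_1<\cdots<x_{k-1}$ and, by the monotone decreasing graph property, with $y_0>y_1>\cdots>y_{k-1}$. Denote by $f^*(z)$ the value produced by the canonical chain. The upper bound $f(z)\le f^*(z)$ is immediate since the canonical chain is one admissible choice in the infimum.

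For the reverse inequality $f(z)\ge f^*(z)$, I would rely on two local calculations that exploit the twist condition on $c$ (which follows from the twist condition on $W$ together with the formula $c=I-W+\gamma$). The central \emph{insertion identity} reads: if an admissible chain omits a support point $(x_j,y_j)$ whose $x$-coordinate lies strictly between those of two consecutive chain elements $(x_{j-1},y_{j-1})$ and $(x_{j+1},y_{j+1})$, inserting $(x_j,y_j)$ changes the telescoping sum by
$$[c(x_j,y_{j-1})+c(x_{j+1},y_j)]-[c(x_j,y_j)+c(x_{j+1},y_{j-1})],$$
which is strictly negative by the twist inequality applied with $x_j<x_{j+1}$ and $y_j<y_{j-1}$ (the $y$-ordering being forced by the monotone decreasing graph). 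A symmetric \emph{deletion identity} shows that removing a support point with $x$-coordinate exceeding $z$ (including the case where it is the last chain element, producing the end term $c(z,y_m)-c(x_m,y_m)$) strictly decreases the sum; the removal of a duplicate and the sorting of an inverted adjacent pair admit analogous twist computations.

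To conclude, any admissible chain is transformed into the canonical chain by a finite sequence of these four moves (insert missing point to the left of $z$, delete point to the right of $z$, delete a duplicate, swap an inverted adjacent pair). Because each move strictly decreases the sum and $S$ is finite, the process terminates at the canonical chain, so $f^*(z)$ is the value of the infimum and therefore equals $f(z)$. The subcase $z=x_k$ fits into the same framework: writing $z=x_{n+1}$ with $n=k-1$, the canonical chain is exactly $(x_0,y_0),\dots,(x_{k-1},y_{k-1})$ and the theorem's expression collapses to it.

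The principal obstacle I anticipate is bookkeeping in the reduction argument: one must check that each of the four moves isolates exactly two terms of the telescoping sum so that the twist inequality applies cleanly, and that the sequence of moves actually converges to the canonical chain without reintroducing earlier violations (for which a simple potential function counting unused support points to the left of $z$ plus inversions should suffice). The monotone decreasing ordering $y_0>y_1>\cdots>y_{k-1}$ supplied by Lemma~\ref{characterize} is used at every step to guarantee the correct sign in the twist comparison.
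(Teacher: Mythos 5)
Your plan is essentially the paper's proof: both reduce an arbitrary Rockafellar chain for $f$ to the canonical ordered chain by local twist-based moves, and both ultimately rest on the same insertion and endpoint-deletion comparisons. The paper formulates these in terms of the cocycle $\Delta(x,x',y)=W(x,y)-W(x',y)$ (with the increasing/decreasing dichotomy for $y\mapsto\Delta(a,a',y)$) and carries out an explicit ``discard from the right, then reinsert on the left'' iteration; your proposal is the same scheme in the $c$-language.

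There is, however, a sign error in one of your moves as stated. You claim that removing a support point whose $x$-coordinate exceeds $z$ strictly decreases the sum, regardless of its position. This is correct for the \emph{last} chain element: the end term becomes $c(z,y_m)-c(x_m,y_m)$, and the twist inequality applied with $z<x_m$ and $y_m<y_{m-1}$ gives the decrease. But for an \emph{interior} element $(x_j,y_j)$ of an ordered chain, $x_{j-1}<x_j<x_{j+1}$, deletion changes the sum by
$$\bigl[c(x_{j+1},y_{j-1})+c(x_j,y_j)\bigr]-\bigl[c(x_j,y_{j-1})+c(x_{j+1},y_j)\bigr],$$
which is strictly \emph{positive} by the very twist inequality you invoke for the insertion identity; indeed, interior deletion is the exact inverse of your insertion move and so raises the sum no matter where $z$ sits relative to $x_j$. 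The paper's proof therefore always peels off the \emph{rightmost} element of the current chain, one step at a time, and only afterward inserts the missing support points to the left of $z$; the validity of each step depends on this order of operations. Similarly, your ``swap an inverted adjacent pair'' is not a two-term comparison that a single twist inequality settles, which is why the paper replaces it by discard-then-reinsert. The potential-function termination idea is sound, but it must be set up so that the admissible move at each stage is forced to be an end deletion (or, once all right-of-$z$ and out-of-order points are gone, an insertion); as written your scheme allows intermediate moves that raise the sum, which breaks the inequality $f(z)\ge f^*(z)$.
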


{\bf Proof:}

Consider the cost $c(x,y)= I(x) - W(x,y) - \gamma$, and a subset $S \subset X \times Y$ $c$-cyclically monotone. Also, assume that $c$ verifies the twist condition:
If $a<a'$ and $b <b'$ then
$$c(a,b)+ c(a',b') > c(a,b')+ c(a',b).$$

In this way, the definition of $c$ implies that:
$$W(a,b)+ W(a',b') < W(a,b')+ W(a',b).$$

Define $\Delta(x,x',y)=W(x,y)- W(x',y)$, so the twist condition can be restated as:
if $a<a'$, and $b <b'$, then
$$\Delta(a,a',b) < \Delta(a,a',b').$$

Therefore, if we define the map $y \to \Delta(a,a',y)$ we get a increasing map.

Observe that:\\
i) $\Delta(x,x',y)=- \Delta(x',x,y)$\\
ii) $\Delta(x,x,y)=0$\\
iii) $\Delta(x,x',y)+ \Delta(x',x'',y)=\Delta(x,x'',y)$\\

In particular the map, $y \to \Delta(a',a,y)$ is decreasing if $a'>a$.

Given $f:X \to \mathbb{R}$ a $c$-convex function we define the $c$-subderivative of $f$ in $x  \in X$ as being the set:
$$\partial_{c}f(x )= \{ y \in Y | f(z)-f(x ) \leq c(z,y) -c(x ,y), \forall z \in X \}.$$

Using $c(x,y)= I(x) - W(x,y) - \gamma$ we get,
$$\partial_{c}f(x )= \{ y \in Y | f(z)-f(x ) \leq I(z)-I(x) - [W(z,y) -W(x ,y)], \forall z \in X \}.$$

We know that $S$ is $c$-cyclically monotone, if and only if, $S \subset \hat{\partial}_{c}f(x_{0})$ where $f$ is a $c$-convex function given by:

$$f(z)= min_{(x_{i},y_{i}) \subset S, i=1..n} \sum_{i=0}^{n} c(x_{i+1}, y_{i })-c(x_{i }, y_{i })$$
where $(x_{0},y_{0}) \in S$ is as fixed point and  $x_{n+1}=z$.
Using $c(x,y)= I(x) - W(x,y) - \gamma$ we get,
$$f(z)= min_{(x_{i},y_{i}) \subset S, i=1..n} \sum_{i=0}^{n} I(x_{i+1})-I(x_{i}) - [ W(x_{i+1}, y_{i })-W(x_{i }, y_{i })] =$$
$$= min_{(x_{i},y_{i}) \subset S, i=1..n} \sum_{i=0}^{n} I(x_{i+1})-I(x_{i}) + [ \Delta (x_{i }, x_{i+1}, y_{i })]=$$
$$= min_{(x_{i},y_{i}) \subset S, i=1..n} I(z)-I(x_{0}) + \sum_{i=0}^{n}  \Delta (x_{i }, x_{i+1}, y_{i }).$$

\begin{lemma}
If, $(x_{i},y_{i}) \subset S, i=0,1,2$ is such that $x_{0}<x_{1}<x_{2} < z $ and $y_{2}<y_{1}<y_{0}$ then,
$$ \Delta (x_{0 }, x_{1 }, y_{0 }) + \Delta (x_{1 }, z, y_{1 }) > \Delta (x_{0 }, x_{1 }, y_{0 }) + \Delta (x_{1 }, x_{2 }, y_{1 }) + \Delta (x_{2 }, z, y_{2 })$$
\end{lemma}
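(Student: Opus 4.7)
The plan is to use the two structural properties of $\Delta$ listed just above the lemma, namely additivity in the first two arguments and strict monotonicity in the third argument (which is the reformulated twist condition). The inequality to be shown has the same term $\Delta(x_0,x_1,y_0)$ on both sides, so the first move is to cancel it and reduce the claim to
\[
\Delta(x_1, z, y_1) > \Delta(x_1, x_2, y_1) + \Delta(x_2, z, y_2).
\]

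Next, I apply the additivity property (iii) with the intermediate point $x_2$ (which satisfies $x_1 < x_2 < z$) to rewrite the left hand side as
\[
\Delta(x_1, z, y_1) = \Delta(x_1, x_2, y_1) + \Delta(x_2, z, y_1).
\]
After subtracting the common term $\Delta(x_1, x_2, y_1)$, the claim reduces to the single inequality
\[
\Delta(x_2, z, y_1) > \Delta(x_2, z, y_2).
\]

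Finally, since $x_2 < z$, the twist condition (as restated immediately before the lemma) asserts that the map $y \mapsto \Delta(x_2, z, y)$ is strictly increasing. The hypothesis $y_2 < y_1$ therefore yields the required strict inequality, which completes the proof. There is no real obstacle here: once one recognizes that the common endpoints suggest cancellation and that the remaining pieces differ only in the third slot of $\Delta$, the lemma is a one-line consequence of additivity plus monotonicity. The only care needed is to check that $x_2 < z$ (supplied by the hypothesis $x_0 < x_1 < x_2 < z$) so that the direction of monotonicity in $y$ is the correct one, as opposed to the decreasing version mentioned in remark (iii) when the first two arguments are reversed.
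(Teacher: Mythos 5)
Your proof is correct and follows essentially the same route as the paper: cancel the common term $\Delta(x_0,x_1,y_0)$, split $\Delta(x_1,z,y_1)$ through the intermediate point $x_2$ via additivity, and apply the monotonicity of $y\mapsto\Delta(x_2,z,y)$ (the twist condition with $x_2<z$) together with $y_1>y_2$. No gaps.
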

\begin{proof}

Observe that, $\Delta (x_{1 }, z, y_{1 })=\Delta (x_{1 }, x_{2 }, y_{1 })+ \Delta (x_{2 }, z, y_{1 })  > \Delta (x_{1 }, x_{2 }, y_{1 })+ \Delta (x_{2 }, z , y_{2 })$, because $\Delta (x_{2 }, z, \cdot)$ is increasing and $y_{1}> y_{2}$.

\end{proof}

\begin{lemma}
If, $(x_{i},y_{i}) \subset S, i=0,1,2$ is such that $x_{0}<x_{1}< z < x_{2}$ and $y_{2}<y_{1}<y_{0}$ then,
$$ \Delta (x_{0 }, x_{1 }, y_{0 }) + \Delta (x_{1 }, z, y_{1 }) < \Delta (x_{0 }, x_{1 }, y_{0 }) + \Delta (x_{1 }, x_{2 }, y_{1 }) + \Delta (x_{2 }, z, y_{2 }).$$

\begin{center}
\includegraphics[scale=0.45,angle=0]{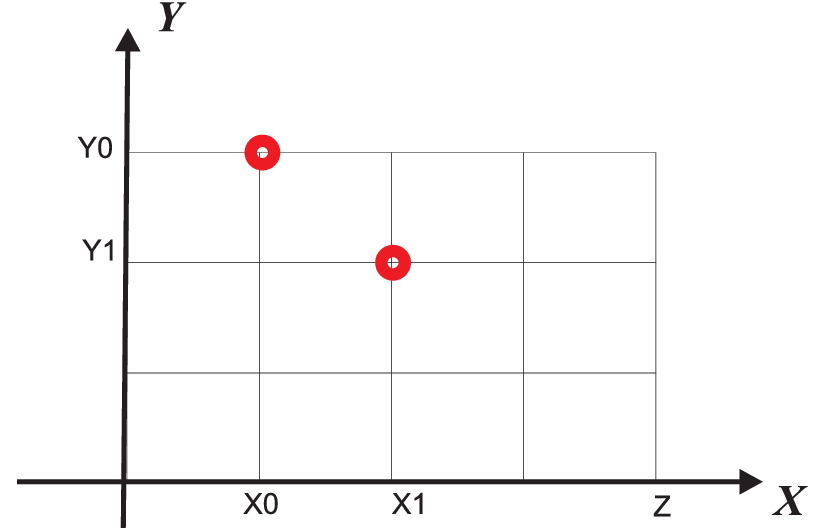}\\
\small{Figure 1 - bad}  \\
\end{center}

\begin{center}
\includegraphics[scale=0.6,angle=0]{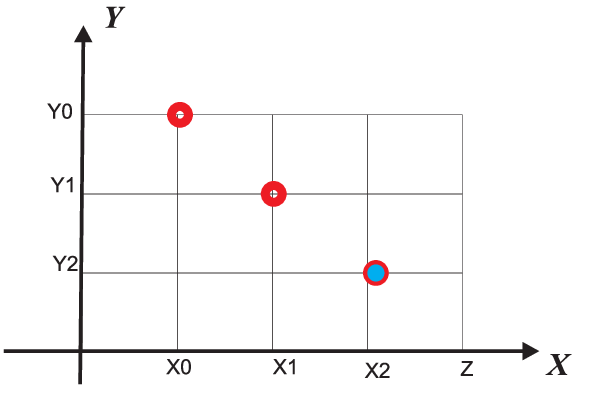}\\
\small{Figure 2 - good}  \\
\end{center}

In particular, $$ \Delta (x_{0 }, x_{1 }, y_{0 }) + \Delta (x_{1 }, z, y_{1 }) < \Delta (x_{0 }, x_{2 }, y_{0 }) + \Delta (x_{2 }, z, y_{2 }).$$
\end{lemma}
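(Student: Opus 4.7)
The plan is to reduce the claim to a single application of the reformulated twist condition by cancelling common terms and using the additivity property (iii) of $\Delta$ in its first two arguments. First I would cancel $\Delta(x_0,x_1,y_0)$ from both sides of the main inequality, which reduces the claim to showing
$$\Delta(x_1,z,y_1) < \Delta(x_1,x_2,y_1) + \Delta(x_2,z,y_2).$$
Next I would split the left-hand side using the Chasles-type identity $\Delta(x_1,z,y_1) = \Delta(x_1,x_2,y_1) + \Delta(x_2,z,y_1)$, which is property (iii) and is purely algebraic in $W$ (so the assumed ordering of $x_1,x_2,z$ is irrelevant for the identity itself). After cancelling the common term $\Delta(x_1,x_2,y_1)$, it suffices to prove
$$\Delta(x_2,z,y_1) < \Delta(x_2,z,y_2).$$

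At this stage the twist condition enters: since $z<x_2$, the first entry of $\Delta(x_2,z,\cdot)$ exceeds the second, so the map $y\mapsto \Delta(x_2,z,y)$ is strictly decreasing (this is exactly the reformulation of twist recorded just before the lemma). Because $y_2<y_1$ by hypothesis, the strict monotonicity gives $\Delta(x_2,z,y_1) < \Delta(x_2,z,y_2)$, finishing the main inequality.

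For the concluding \emph{in particular} statement I would compare the right-hand side just obtained with $\Delta(x_0,x_2,y_0) + \Delta(x_2,z,y_2)$. Using (iii) again, $\Delta(x_0,x_2,y_0) = \Delta(x_0,x_1,y_0) + \Delta(x_1,x_2,y_0)$, so the new upper bound exceeds the previous one by $\Delta(x_1,x_2,y_0) - \Delta(x_1,x_2,y_1)$. Since $x_1<x_2$, the map $y\mapsto \Delta(x_1,x_2,y)$ is now in the \emph{increasing} regime; together with $y_1<y_0$ this yields $\Delta(x_1,x_2,y_1) < \Delta(x_1,x_2,y_0)$, hence the extra term is positive, and chaining with the first part gives the sharpened inequality. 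The one thing to be careful about is keeping the two monotonicity regimes straight: on the pair $(x_2,z)$ the first entry is the larger, so $y\mapsto\Delta(x_2,z,y)$ is decreasing and is applied to $y_2<y_1$; on the pair $(x_1,x_2)$ the first entry is the smaller, so $y\mapsto\Delta(x_1,x_2,y)$ is increasing and is applied to $y_1<y_0$. Both inequalities point the correct way, which is the content of the twist hypothesis; beyond this bookkeeping the argument is a routine algebraic manipulation.
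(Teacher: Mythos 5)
Your proof is correct and follows exactly the paper's argument: both decompose $\Delta(x_1,z,y_1)$ via the additivity identity (iii) into $\Delta(x_1,x_2,y_1)+\Delta(x_2,z,y_1)$, invoke the decreasing monotonicity of $y\mapsto\Delta(x_2,z,y)$ (since $x_2>z$) against $y_2<y_1$ for the main inequality, and then use the increasing monotonicity of $y\mapsto\Delta(x_1,x_2,y)$ against $y_1<y_0$ together with $\Delta(x_0,x_2,y_0)=\Delta(x_0,x_1,y_0)+\Delta(x_1,x_2,y_0)$ for the \emph{in particular} part. The only cosmetic difference is that you cancel the common term $\Delta(x_0,x_1,y_0)$ explicitly up front, which is a harmless bookkeeping choice.
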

\begin{proof}

Observe that, $\Delta (x_{1 }, z, y_{1 })=\Delta (x_{1 }, x_{2 }, y_{1 })+ \Delta (x_{2 }, z, y_{1 })  < \Delta (x_{1 }, x_{2 }, y_{1 })+ \Delta (x_{2 }, z , y_{2 })$, because $\Delta (x_{2 }, z, \cdot)$ is decreasing and $y_{1}> y_{2}$.

\begin{center}
\includegraphics[scale=0.6,angle=0]{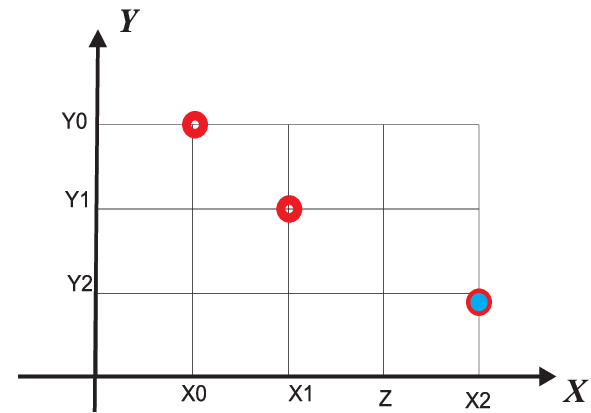}\\
\small{Figure 3 - bad}  \\
\end{center}

\begin{center}
\includegraphics[scale=0.6,angle=0]{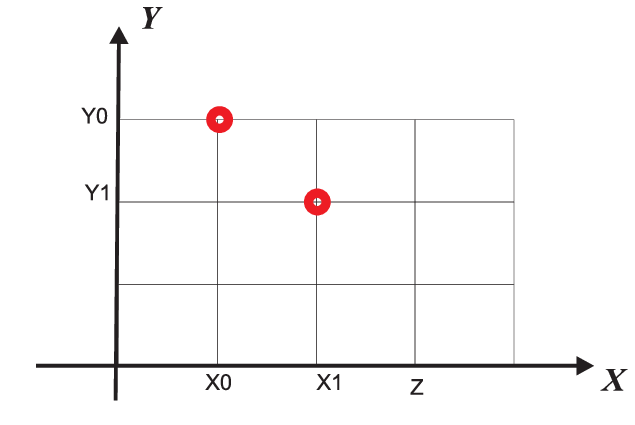}\\
\small{Figure 4 - good}  \\
\end{center}

Now observe that,\\
 $\Delta (x_{0 }, x_{2 }, y_{0 }) + \Delta (x_{2 }, z, y_{2 })=\Delta (x_{0 }, x_{1 }, y_{0 }) +\Delta (x_{1 }, x_{2 }, y_{0 })+ \Delta (x_{2 }, z, y_{2 }) > \\ \Delta (x_{0 }, x_{1 }, y_{0 }) +\Delta (x_{1 }, x_{2 }, y_{1 })+ \Delta (x_{2 }, z, y_{2 }) > \Delta (x_{0 }, x_{1 }, y_{0 }) + \Delta (x_{1 }, z, y_{1 })$.

\end{proof}

Now one can generalize the idea above:
Suppose that, $(x_{i},y_{i}) \subset S, i=0,1,2, ...,n$ is such that $x_{0}<x_{1}< ... <x_{k}< z < x_{k+1} < ...< x_{n}$ and $ y_{n}<...<y_{2}<y_{1}<y_{0}$ then,\\
$\Delta (x_{0 }, x_{1 }, y_{0 }) + \Delta (x_{1 }, x_{2 }, y_{1 }) + ... + \Delta (x_{k }, z, y_{k }) < \\ \Delta (x_{0 }, x_{1 }, y_{0 }) + \Delta (x_{1 }, x_{2 }, y_{1 }) + ... + \Delta (x_{n }, z, y_{n }).$

In order to see this, we proceed by induction in the right side of the inequality above:\\
$\Delta (x_{n-1 }, x_{n }, y_{n-1 }) + \Delta (x_{n }, z, y_{n }) >$ \\
$\Delta (x_{n-1 }, x_{n }, y_{n-1 }) + \Delta (x_{n }, z, y_{n-1 }) =$ \\
$\Delta (x_{n-1 }, z, y_{n-1 })$ \\
In this step we discard the pair $(x_{n }, y_{n })$. We must to repeat this process while $n-j >k$, discarding all points in the right side of $z$.

So the conclusion is, that we can discard all points in the right side of $z$ decreasing the sum, and we can introduce a point between the last point in the left size of z, and z, decreasing the sum (see Figures 3 and 4).
\begin{center}
\includegraphics[scale=0.7,angle=0]{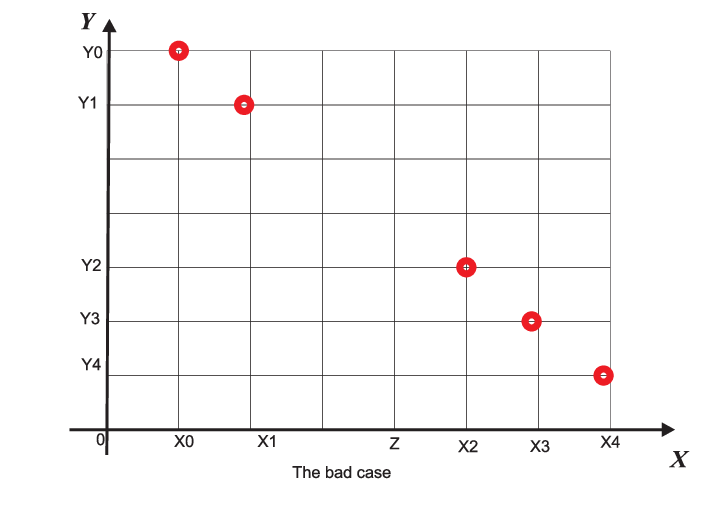}\\
\small{Figure 5}  \\
\end{center}

We discard $(x_2,y_2), (x_3,y_3), (x_4,y_4), $  from right size and insert $(A,B)$ between $(x_1,y_1)$ and $z$.
\begin{center}
\includegraphics[scale=0.6,angle=0]{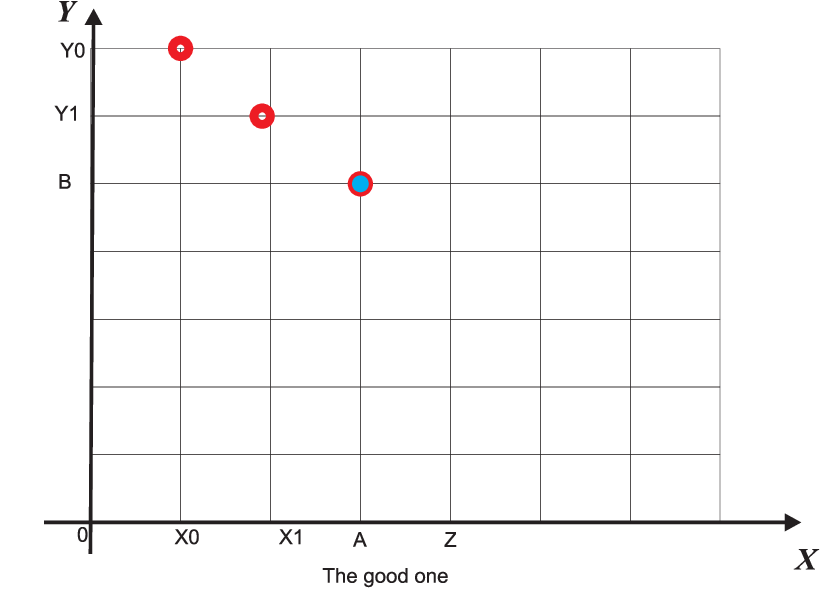}\\
\small{Figure 6}  \\
\end{center}

The case in which $z < x_0$ must be analyzed now:
\begin{center}
\includegraphics[scale=0.6,angle=0]{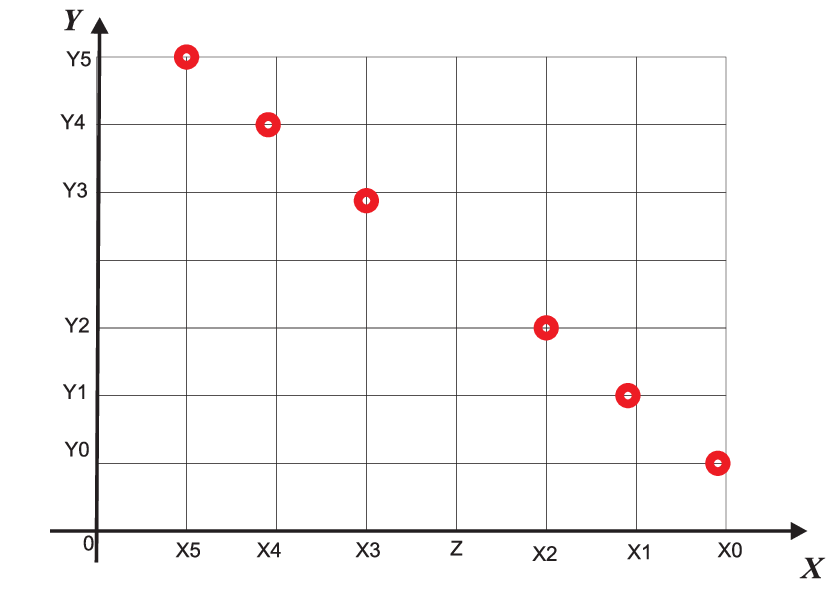}\\
\small{Figure 7 - bad}  \\
\end{center}

\begin{center}
\includegraphics[scale=0.6,angle=0]{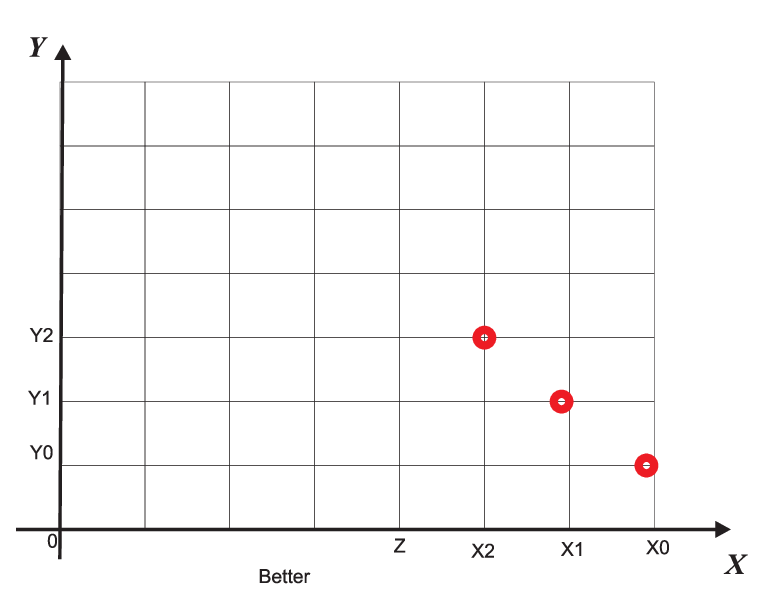}\\
\small{Figure 8 - good}  \\
\end{center}

Observe that:\\
$\Delta (x_{0 }, x_{1 }, y_{0 }) + \Delta (x_{1 }, x_{2 }, y_{1 }) + \Delta (x_{2}, x_{3 }, y_{2}) +\Delta (x_{3 }, x_{4 }, y_{3 }) +\Delta (x_{4 }, x_{5 }, y_{4 }) + \Delta (x_{5 }, z, y_{5 })  > $ \\
$\Delta (x_{0 }, x_{1 }, y_{0 }) + \Delta (x_{1 }, x_{2 }, y_{1 }) + \Delta (x_{2}, x_{3 }, y_{2}) +\Delta (x_{3 }, x_{4 }, y_{3 }) + [ \Delta (x_{4 }, x_{5 }, y_{4 }) + \Delta (x_{5 }, z, y_{4 }) ]  = $ \\
$\Delta (x_{0 }, x_{1 }, y_{0 }) + \Delta (x_{1 }, x_{2 }, y_{1 }) + \Delta (x_{2}, x_{3 }, y_{2}) +\Delta (x_{3 }, x_{4 }, y_{3 }) +  \Delta (x_{4 }, z, y_{4 }), $ \\
and successively to eliminate 4 and 3.

\vspace{0.2cm}
Now we check what happen with permutations of the order in the projected points.
\vspace{0.2cm}

Note that the sum
$$\sum_{i=0}^{n} c(x_{i+1}, y_{i })-c(x_{i }, y_{i })$$
can change by sorting the sequence of points $(x_{i},y_{i}) \subset S, i=1..n$. So we need to consider the natural question about the better way to rename this points.

Please, check the bellow figure:
\begin{center}
\includegraphics[scale=0.7,angle=0]{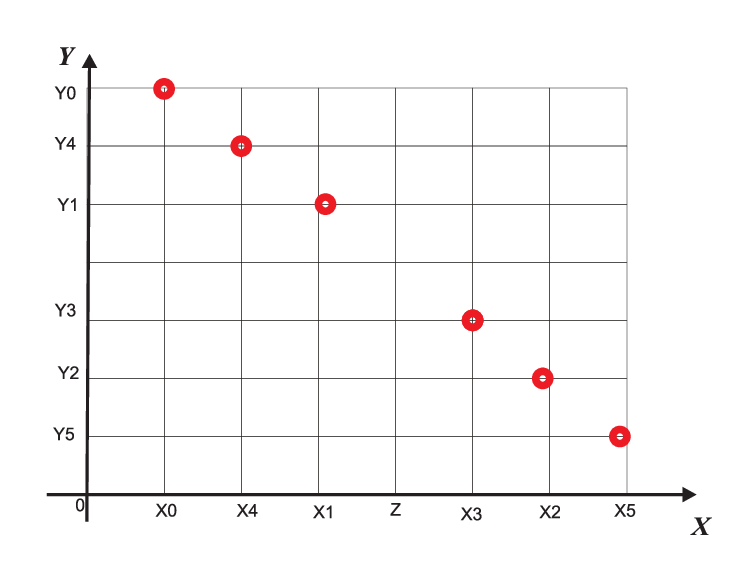}\\
\small{Figure 9 - too bad}  \\
\end{center}

We claim that it is possible discard all the points at the right side of $z$ and also all the points between $x_0$ and $z$ that are no ordered in order to minimize the sum above.

In fact:
$\Delta (x_{0 }, x_{1 }, y_{0 }) + \Delta (x_{1 }, x_{2 }, y_{1 }) + \Delta (x_{2}, x_{3 }, y_{2}) +\Delta (x_{3 }, x_{4 }, y_{3 }) + [ \Delta (x_{4 }, x_{5 }, y_{4 }) + \Delta (x_{5 }, z, y_{5 })]  > $ \\
$\Delta (x_{0 }, x_{1 }, y_{0 }) + \Delta (x_{1 }, x_{2 }, y_{1 }) + \Delta (x_{2}, x_{3 }, y_{2}) + [\Delta (x_{3 }, x_{4 }, y_{3 }) +\Delta (x_{4 }, z, y_{4 })] > $ \\
$\Delta (x_{0 }, x_{1 }, y_{0 }) + [ \Delta (x_{1 }, x_{2 }, y_{1 }) + \Delta (x_{2}, x_{3 }, y_{2})] + \Delta (x_{3 }, z, y_{3 }) ] > $ \\
$\Delta (x_{0 }, x_{1 }, y_{0 }) +[ \Delta (x_{1 }, x_{3 }, y_{1 }) + \Delta (x_{3 }, z, y_{3 }) ] > $ \\
$\Delta (x_{0 }, x_{1 }, y_{0 }) + \Delta (x_{1 }, z, y_{1 }) .$ \\

So the sequence $(x_0,y_0), (x_1,y_1)$ in this order minimize this sum.
\vspace{0.3cm}

We know that the graph property is true. But suppose we have a more general case where $\Delta (x, z, y)$
can be consider and we do not have the graph property.

Consider the sequence  $(x_0,y_0), (x_1,y_1)$ and suppose $z > x_1 >x_0$. Additionally suppose that $(x_1, .) \cap S \neq \{y_1\}$, so we can compares the sum $\Delta (x_{0 }, x_{1 }, y_{0 }) + \Delta (x_{1 }, z, y_{1 })$ with $\Delta (x_{0 }, x_{1 }, y_{0 }) + \Delta (x_{1 }, z, y).$ for any $y \in (x_1, .) \cap S \neq \{y_1\}$.

We claim that this function is monotone increasing in $y$.

\begin{center}
\includegraphics[scale=0.5,angle=0]{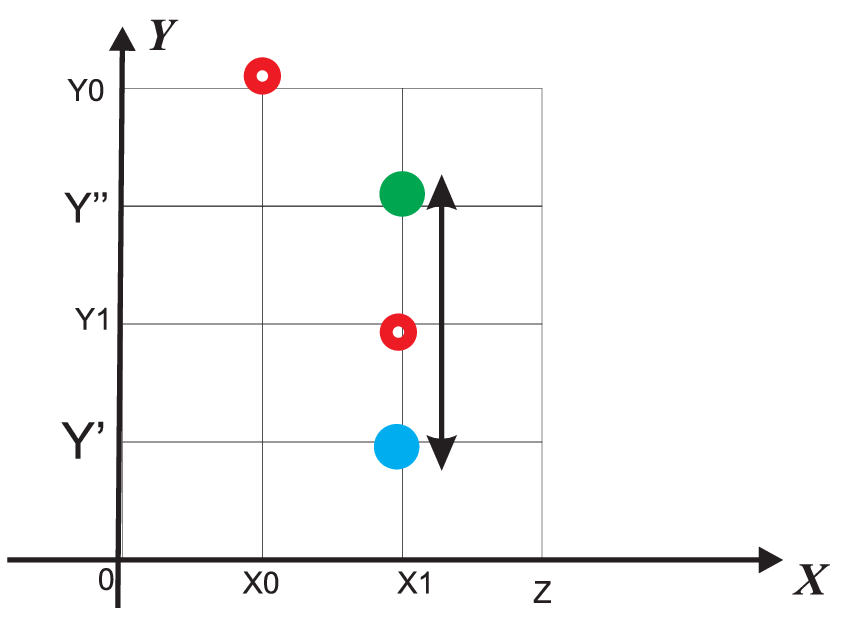}\\
\small{Figure 10 - going down is better}  \\
\end{center}

In fact suppose that $y' < y_1 < y'' < y_0$, as in Fig. 8.

Observe that, $\Delta (x_{1 }, z, y_{1 }) < \Delta (x_{1 }, z, y'')$ and $\Delta (x_{1 }, z, y_{1 }) > \Delta (x_{1 }, z, y')$ because $ x_1 < z$.

The conclusion is that if the support of $\hat{\mu}_{max}$ is a
periodic orbit, then, we choose $(x_0,y_0)$ in the support of
$\hat{\mu}_{max}$.

From the above, in this case given $z\in \Sigma$, then
$$ f(z)= [\,(\, c(z, y_n)- c(x_n,y_n)\,) +$$ $$ (\, c(x_n,
y_{n-1})- c(x_{n-1},y_{n-1})\,)\,+...$$
$$+...+ (\, c(x_3, y_2)- c(x_2,y_2)\,)\,\} \, +$$
 $$ (\, c(x_2, y_1)-
c(x_1,y_1)\,)+ (\, c(x_1, y_0)- c(x_0,y_0)\,) \,\,\,] .,$$

where we use all the possible $x_i$, $i=1,2,..,n,$ on the left of
$z$ , and for each $x_i$ we choose the corresponding $y_i$ such
that $(x_i,y_i)$ is in the support of $\hat{\mu}_{max}$. Moreover,
$x_0<x_1<x_2<...<x_n.$ \vspace{0.3cm}

Finally, we can say that $\hat{\partial}_c  f (x_k)=y_k$, for any
$k$.

One can get similar results for the function $g$
(obtained just from the kernel $W$) defined before.

From the reasoning above (for the case of $W$ satisfying the twist
condition), in the case $\mu_{\infty}$ is not a periodic orbit,
then in definition of $f$, the infimum is not attained in a finite
sequence of $x_n$ in the support of $\mu_\infty$.
\vspace{0.2cm}

\bigskip
\section{Appendix}\label{appendix}

Here we consider first  the shift $\Sigma=\{0,1\}^{\mathbb{N}}$, and $\Sigma$ as a metric space with the usual distance:
$$ d(x,y)=
\begin{cases}
0 & if \quad x=y \\
(1/2)^{n} & if \quad n=\min\{ i | x_{i} \neq y_{i} \}.
\end{cases}
$$
Additionally, we suppose that $\Sigma$ is ordered  by
$x < y$, if $x_i=y_i$ for $i=1..n-1$, and $x_{n}=0$ and $y_{n}=1$.

As the usual, we consider the dynamical system $(\Sigma, \sigma)$ where $\sigma: \Sigma \to \Sigma$ is given by $\sigma(x)=\sigma(x_{1}, x_{2}, x_{3}, ...)=(x_{2}, x_{3}, x_{4}, ...)$.

\vspace{0.2cm}
{\bf a) Potentials and the involution kernel}
\vspace{0.2cm}

As usual we denote
$$\tau_{x}^{*}(y)=(x_{1},y_{1}, y_{2}, y_{3}, ...)\text{ and } \tau_{y}(x)=(y_{1},x_{1}, x_{2}, x_{3}, ...),$$
and
$$\hat{\sigma}(x,y)=(\sigma(x), \tau_{x}^{*}(y)) \text{ and }\hat{\sigma}^{-1}(x,y)=(\tau_{y}x, \sigma^{*}(y)),$$
the skew product map, where $\sigma^{*}(y=(y_{1}, y_{2}, y_{3}, ...))=(y_{2}, y_{3}, y_{4}, ...)$.

We also define $\tau_{k,y} x= (y_{k},y_{k-1}, ...y_2,y_1, x_0,x_1,x_2,...) $, 

where $x=(x_0,x_1,x_2,...),$ $y=(y_1,y_2,y_3,...)$. In a similar way we define $\tau_{k,y}^* x.$

Given a continuous function $A:\Sigma \to \mathbb{R}$, remember
that a continuous function $W : \Sigma  \times \Sigma  \to \mathbb{R}$ is an involution kernel for $A$  if $(W \circ \hat{\sigma}^{-1}  -W  + A\circ\hat{\sigma}^{-1})(x,y)$ does not depends on $x$; In this case the continuous function $A^{*}(y)=(W \circ \hat{\sigma}^{-1}  -W  + A\circ\hat{\sigma}^{-1})(x,y)$ is called the $W$-dual potential of A.

\vspace{0,2cm}

As  in \cite{BLT} we define the cocycle $\Delta_{A}(x,x',y)$, where
\begin{align*} \Delta_{A}(x,x',y) & = \sum_{n\geq1}A\circ \hat{\sigma}^{-n}(x,y)
-A\circ \hat{\sigma}^{-n}(x',y)\\
& = \sum_{n \geq1}A\circ\tau_{n,y}(x)
-A\circ\tau_{n,y}(x'),
\end{align*}
and its dual version $\Delta_{A^{*}}(x,y,y')$, where
\begin{align*} \Delta_{A^{*}}(x,y,y') & = \sum_{n\geq1}A^{*} \circ \hat{\sigma}^{n}(x,y)
-A^{*} \circ \hat{\sigma}^{n}(x,y') \\
& = \sum_{n\geq1}A^{*}\circ\tau_{n,x}^{*}(y)
-A^{*} \circ\tau_{n,x}^*(y').
\end{align*}

Note that:\\
i) $\Delta_{A}(x,x',y)=- \Delta_{A}(x',x,y)$, in particular $\Delta_{A}(x,x,y)=0$,\\
ii) $\Delta_{A}(x,x',y)+ \Delta_{A}(x',x'',y)=\Delta_{A}(x,x'',y)$,\\
iii) $\Delta_{A}(x,x',y)= \Delta_{A}(\tau_{y}x,\tau_{y}x',\sigma^{*}(y)) +[ A \circ\tau_{y}x
-A\circ\tau_{y}x'],$\\
and the same relations are true for $\Delta_{A^{*}}(x,y,y')$.

Using this properties one can prove that, for any involution kernel we have
$W(x,y)-W(x',y)=\Delta_{A}(x,x',y)\text{ and } W(x,y)-W(x,y')=\Delta_{A^{*}}(x,y,y').$

From this fact, we get that the difference between two involution kernels for $A$ is a continuous function of $y$:
$$\{ \text{Involution kernels for} \, A \} / \text{C}^{0}(\Sigma) = W^{0},$$
where $W^{0}(x,y)= \Delta_{A}(x,x',y)$ for a fix $x' \in \Sigma$ is called a fundamental involution kernel of $A$.  Indeed, the property (iii) shows that $W^{0}$ is an involution kernel for $A$.

On the other hand, given another involution kernel, $W$ we have $W(x,y)-W(x',y)=\Delta_{A}(x,x',y)$, thus $$W(x,y) = W(x',y)+ \Delta_{A}(x,x',y)= W(x',y) + W^{0}(x,y)= g(y) + W^{0}(x,y),$$ where $g(y)= W(x',y) \in C^{0}(\Sigma)$.

As an example we compute the general dual potential. First for $W^{0}(x,y)= \Delta_{A}(x,x',y)$ we get:
\begin{align*} A^{*}_{0}(y) & = (W^{0}(\tau_{y}x,\sigma^{*}(y))  - W^{0}(x,y)  + A(\tau_{y}x)\\
&= \Delta_{A}(\tau_{y}x, x',\sigma^{*}(y)) - \Delta_{A}(x,x',y) + A(\tau_{y}x)\\
&= A(\tau_{y}x') + \Delta_{A}(\tau_{y}x',x',\sigma^{*}(y)).\\
\end{align*}
Given another involution kernel, $W$ we have $W(x,y) = W(x',y) + W^{0}(x,y)$ thus
$$A^{*}(y)=(W \circ \hat{\sigma}^{-1}  -W  + A\circ\hat{\sigma}^{-1})(x,y)=W(x',\sigma^{*}(y))- W(x',y) + A^{*}_{0}(y).$$

\vspace{0.2cm}
{\bf b) The twist property of an involution kernel}
\vspace{0.2cm}

If $A:\Sigma \to \mathbb{R}$ is a potential and $W$ an arbitrary involution kernel for $A$, as we said before,  $W$ has the twist property, if for any, $a,b,a',b' \in \Sigma $
$$W(a,b)+ W(a',b') < W(a,b')+ W(a',b),$$
provided that $a<a'$ and $b<b'$.

If we rewrite this inequality as,
\begin{align*}
W(a,b)+ W(a',b') & < W(a,b')+ W(a',b)\\
W(a,b)-W(a',b)   & < W(a,b')-W(a',b') \\
\Delta_{A}(a,a',b) & < \Delta_{A}(a,a',b'),
\end{align*}
we get an alternative criteria for the twist property, that is, $W$ has the twist property, if  for any, $a,a' \in \Sigma $ the function $ y \to \Delta_{A}(a,a',y),$ is strictly increasing, provided that $a<a'$.

\vspace{0.2cm}

{\bf Remark 5}
This characterization shows a very important fact. The twist property is a property of $A$, so we can said that $A$ is a twist potential or equivalently $A$ has a twist involution kernel (as, obviously  other involution kernel  is also twist).
\vspace{0.2cm}

\vspace{0.2cm}
{\bf Remark 6}
As an initial approximation we can consider a different setting of dynamics. Let $T(x)=-2x\text{ mod }1$, and
$$\tau_{0} x = -\frac{1}{2}x + \frac{1}{2} ,\text{ and } \tau_{1} x = -\frac{1}{2}x + 1,$$
the inverse branches that defines the skew maps (that are not the actual natural extension of $T$):
$$\hat{T}(x,y)=(T(x), \tau_{x}^{*}(y)) \text{ and }\hat{T}^{-1}(x,y)=(\tau_{y}x, T^{*}(y)).$$

So, one can compute an involutive (that is, $A^{*}(y)=A(y)$) smooth  kernel  for $A_{1}(x)=x$ and $A_{2}(x)=x^{2}$ given by
$$W_{1}(x,y)=-\frac{1}{3}(x+y)
\text{ and }
 W_{2}(x,y)=\frac{1}{3}(x^{2}+y^{2})-\frac{4}{3}xy .$$

As a corollary we get that any potential $A(x)=a + b x + c x^{2}$ has a   smooth  involution kernel given by $ W(x,y)=a + b W_{1}(x,y) + c W_{2}(x,y). $

Here and in the next paragraphs, we will denote
$$W_{A}(x,y):=a + b W_{1}(x,y) + c W_{2}(x,y),$$
where $A(x)=a + b x + c x^{2}$ is a polynomial  of degree 2.

We observe that the twist property can be derived from the positivity of the second mix derivative of the involution kernel when it is smooth. Note that,
$$\frac{\partial^{2} W_{1}}{\partial x \partial y}=0,\text{ and } \frac{\partial^{2} W_{2}}{\partial x \partial y}=-\frac{4}{3},$$
thus $W_{1}$ is not twist and $W_{2}$ is. Actually any potential $A(x)=a + b x + c x^{2}$ where $c > 0$ is twist.
\vspace{0.3cm}

{\bf Remark 7}\label{NotTwist}
In this remark we are going to consider the case of  $A(x)=a + b x + c x^{2}$ where $c < 0$ (not twist). In this case we will be able to compute the calibrated subaction explicitly, which, we believe, it is interesting in itself.

As a first example consider  $A(x)=-(x-1)^{2}$ which is  a convex potential.

From \cite{JS} \cite{J6} we get that the unique maximizing measure for this potential is $\mu_{\infty}=\delta_{2/3}$, so the critical value is $m=A(2/3)$. Using the fact that that $m=A(2/3)$ one can show that there is a unique (up to constants) calibrated subaction $\phi$ given by:
$$\phi(x)= W(x,2/3) - W(2/3,2/3)=-\frac{1}{3}x^2 + \frac{2}{9}x$$
where the kernel is given by
$$W(x,y)=-(1/3) x^{2}-(1/3)y^{2}+(4/3)xy-(2/3)x-(2/3)y.$$

As a second example consider   $A(x)=-(x-\frac{1}{2})^{2}$ which it is  also  a concave potential.

The general arguments in \cite{JS} shown that any maximizing measure for this potential is $\mu_{\infty}=(1-t)\delta_{1/3}+ t \delta_{2/3}$, where $t \in [0,1]$, so the critical value is $m=A(1/3)=A(2/3)$. In this case the involutive smooth involution kernel is:
$$W(x,y)=-(1/3) x^{2}-(1/3)y^{2}+(4/3)xy-(2/3)x-(1/3)y.$$

It is easy to verify that,
\begin{align*}
\phi(x) &= (W(x,1/3) - W(1/3,1/3)) \chi_{[0,1/2)}(x) +  W(x,2/3) - W(2/3,2/3) \chi_{[1/2,1]}(x)\\
&=\max\{ W(x,1/3) - W(1/3,1/3),  W(x,2/3) - W(2/3,2/3)\}
\end{align*}
is a calibrated subaction for $A$.

In order to prove this, define\\
$V_{1}(x)=W(x,1/3) - W(1/3,1/3)=\Delta(x,1/3,1/3) =-(1/3)x^{2}+(1/9)x$,\\
$V_{2}(x)=W(x,2/3) - W(2/3,2/3)=\Delta(x,2/3,2/3) =-(1/3)x^{2}+(5/9)x-2/9$, and
$$\phi(x)=V_{1}(x) \chi_{_{[0,1/2)}}(x)+V_{2}(x) \chi_{_{[1/2,1]}}(x)=\max\{ V_{1}(x), V_{2}(x)\}.$$

Note that,
\begin{align*}
\phi(\tau_{0}x) &= V_{1}(\tau_{0}x) \chi_{_{[0,1/2)}}(\tau_{0}x)+ V_{2}(\tau_{0}x) \chi_{_{[1/2,1]}}(\tau_{0}x)\\
&=V_{1}(\tau_{0}x)=\Delta(\tau_{0}x,1/3,1/3)\\
&=\Delta(\tau_{1/3}x,\tau_{1/3}1/3,T^{*}1/3)\\
&=\Delta(x,1/3,1/3) - [A(\tau_{1/3}x)-A(\tau_{1/3}1/3)]\\
&=V_{1}(x) - [A(\tau_{0}x)-m].\\
\end{align*}
Thus $\phi(\tau_{0}x)+ A(\tau_{0}x)-m =V_{1}(x)$. Analogously, $\phi(\tau_{1}x)+ A(\tau_{1}x)-m =V_{2}(x)$ so
\begin{align*}
\phi(x) &= \max\{ V_{1}(x), V_{2}(x)\}\\
&= \max\{ \phi(\tau_{0}x)+ A(\tau_{0}x)-m, \phi(\tau_{1}x)+ A(\tau_{1}x)-m\}\\
&= \max_{y \in \Sigma}\{ \phi(\tau_{y}x)+ A(\tau_{y}x)-m\}.\\
\end{align*}

\vspace{0,2cm}

{\bf c) Twist criteria}

\vspace{0,2cm}
Is natural to consider a criteria for the twist property for a class of functions that has a small dependence on the cubic (or higher order) terms. Let $P_{2}^{+}=\{p(x)=a+bx+cx^{2}\, | \,c>0\}$ be the set of strictly convex polynomial. Consider $p \in P_{2}^{+}$, and define $$\mathcal{C}_{\varepsilon}(p)=\{ A \in \text{C}^{3}([0,1]) | A(x)= p(x) + \varepsilon R(x), \frac{\partial}{\partial x}R \in \text{C}^{3}([0,1]) \}$$

\begin{theorem}
For any $p \in P_{2}^{+}$, there exists $\varepsilon>0$ such that all $A \in \mathcal{C}_{\varepsilon}(p)$  is twist.
\end{theorem}
\begin{proof}
Consider $p \in P_{2}^{+}$ fixed. So , $p$ has a smooth and involutive involution kernel given by
$$W_{p}(x,y)=(a+ b W_{1} + c W_{2})(x,y),$$
that is, $p^{*}(y)=p(y)$, where $W_{1}(x,y)=-\frac{1}{3}(x+y)$ and
$W_{2}(x,y)=\frac{1}{3}(x^{2}+y^{2})-\frac{4}{3}xy$, are the involution kernel associated to $x$ and $x^2$ respectively.  Let, $A=p + \varepsilon R \in \mathcal{C}_{\varepsilon}(p)$, and $W_{R}$ be the involution kernel for $R$.  Since $R$ is $\text{C}^{3}$ we get that, is $W_{R}$ is $\text{C}^{2}$ in the variable $x$.

Using the linearity of the cohomological equation, we get
$W_{A}(x,y)$, and differentiating with respect to $x$, we have
$$\frac{\partial}{\partial x}W_{A}(x,y)=(b \frac{\partial}{\partial x} W_{1} + c \frac{\partial}{\partial x} W_{2})(x,y) + \varepsilon \frac{\partial}{\partial x}W_{R}(x,y)=$$
$$
-\frac{1}{3}b + \frac{2}{3} c x - \frac{4}{3} c y + \varepsilon \frac{\partial}{\partial x}W_{R}(x,y)
$$

Since $- \frac{4}{3} c <0$, and $\frac{\partial}{\partial x}W_{R}(x,y) \in \text{C}^{0}([0,1]^{2})$ the compactness of $[0,1]^{2}$ implies that $\frac{\partial}{\partial x}W_{A}(x,\cdot)$ is a strictly decreasing function for any $\varepsilon$ small enough, what is equivalent to the twist property.
\end{proof}

\vspace{0,2cm}
{\bf Remark 9}
If, $A \in \text{C}^{\infty}([0,1])$ is strongly convex, we can consider a perturbation of $A$ of order 2 given by
$$B_{\varepsilon}(x)=A(0)-A'(0) x + \frac{A''(0)}{2} x^2 + \varepsilon \sum_{n\geq 3}\frac{A^{(n)}(0)}{n!} x^n \in \mathcal{C}_{\varepsilon}(p_{A}),$$
where $p_{A}= A(0)-A'(0) x + \frac{A''(0)}{2} x^2 \in P_{2}^{+}$.
Thus, we can find $\varepsilon_{0}>0$ such that $B_{\varepsilon}$ is twist for any $0<\varepsilon< \varepsilon_{0}$.

\vspace{0.6cm}

{\bf d) The involution kernel is bi-Holder}
\medskip

 We consider now $T(x)=2 x$ (mod 1) on the interval $[0,1]$ and the shift $\sigma$ on $\Omega=\{0,1\}^\mathbb{N}$.

A natural question is the regularity of the involution kernel $W$.

We denote $\tau_j$ , $j=0,1$ the two inverse branches of $T$. Given $w =(w_1,w_2,...)\in \{0,1\}^\mathbb{N}$ we denote by $\tau_{k,w}$
the transformation in $[0,1]$ given by
$$ \tau_{k,w}(x)=(\tau_{w_k}\circ \tau_{w_{k-1}}\circ\,...\, \circ\tau_{w_1})\, (x) .$$

We have that, for a fixed $x_0$
$$\Delta(x,x_0,w) = \sum_{k=1}^\infty A( \tau_{k,w} (x)) -  A( \tau_{k,w} (x_0)) $$
and, the involution kernel $W$ can be described as: for any $(x,w)$ we have
$$W(x,w) = \Delta(x,x_0,w).$$

It is easy to see that $W$ is Holder on the variable $x$.

Consider $a,b \in \Omega$ and suppose that $d(a,b)= 2^{-n}.$ In this way $a_j=b_j$, $j=1,2...,n$. We denote $\bar{a}= \sigma^n (a)$ and $\bar{b}= \sigma^n (b)$.

\begin{proposition}
 Suppose $A$ is $\alpha-$Holder. Consider $a,b \in \Omega$ such that $d(a,b)= 2^{-n}.$
For $x \in  [0,1]$ fixed we have
$$|\,W(x,a) - W(x,b)\,|\leq C\, (2^{-n})^\alpha .$$
\end{proposition}

{\bf Proof:}

Note that for $z= \tau_{n,a} (x)= \tau_{n,b} (x)$ and $z_0= \tau_{n,a} (x_0)= \tau_{n,b} (x_0)$ we have
$$ W(x,a) - W(x,b)= \sum_{k=1}^\infty A( \tau_{k,a} (x)) -  A( \tau_{k,a} (x_0)) - A( \tau_{k,b} (x)) +  A( \tau_{k,b} (x_0))= $$
$$ \sum_{k=1}^\infty [\,A( \tau_{k,a} (x)) -   A( \tau_{k,b} (x)) \,] - [\, A( \tau_{k,a} (x_0)) -  A( \tau_{k,b} (x_0))\,]= $$
$$ \sum_{k=1}^\infty [\,A( \tau_{k,\bar{a}} (z)) -   A( \tau_{k,\bar{b}} (z)) \,] - [\, A( \tau_{k,\bar{a}} (z_0)) -  A( \tau_{k,\bar{b}} (z_0))\,].$$

Note also that $|z-z_0|\leq d(a,b)= 2^{-n}.$

Consider $z=z_0 +h$, then
$$ \,A( \tau_{k,\bar{a}} (z_0+h))- A( \tau_{k,\bar{a}} (z_0)) \leq C_A \, d( \tau_{k,\bar{a}} (z_0+h) ,\tau_{k,\bar{a}} (z_0) )^\alpha \leq $$
$$C_A \, (\, 2^{-k} \,h\,)^\alpha= C_A   (\, 2^{-k} \,)^\alpha\, h^\alpha.$$

Then,
$$ \sum_{k=1}^\infty [\,A( \tau_{k,\bar{a}} (z)) - A( \tau_{k,\bar{a}} (z_0)) \,] - [\, A( \tau_{k,\bar{b}} (z)) -   A( \tau_{k,\bar{b}} (z_0)) \,]$$
$$\leq   C_A   \sum_{k=1}^\infty 2\,  (\, 2^{-k} \,)^\alpha\, h^\alpha \leq  C_A   \sum_{k=1}^\infty 2\,  (2^{\alpha})^{-k} \,\, h^\alpha \leq C\, d(a,b)^\alpha. $$

\medskip

From the above we get:

\begin{theorem} If $A: S^1 \to \mathbb{R}$ is Holder then $W:S^1 \times \{0,1\}^\mathbb{N}\to \mathbb{R}$ is bi-Holder.

\end{theorem}

\medskip
{\bf e) The Fenchel-Rockafellar Theorem}

\medskip

Given $f:\mathbb{R}\to\mathbb{R}$ defined on the variable $x$, the Legendre transform of $f$, denoted by $f^*$, is the function on the variable $p$ defined by
$$ f^*(p)= \sup_{x \in \mathbb{R}} \{p\, x -\,f(x)\}.$$
\medskip

\begin{theorem} (Fenchel-Rockafellar) - Suppose $f(x)$  is smooth strictly convex, $f:\mathbb{R}\to\mathbb{R}$, and,  $g(x)$ is smooth strictly concave, $g:\mathbb{R}\to\mathbb{R}$. Denote by $f^*$ and $g^*$ the corresponding Legendre transforms on the variable $p$.

Then,
$$ \inf_{x \in \mathbb{R} } \, \{ \,f(x)\,-\,g(x)\,\}\,= \,\sup_{p \in \mathbb{R} } \, \{ \,g^*(p)\,-\,f^*(p)\,\}$$
\end{theorem}

\begin{center}
\includegraphics[scale=0.9,angle=0]{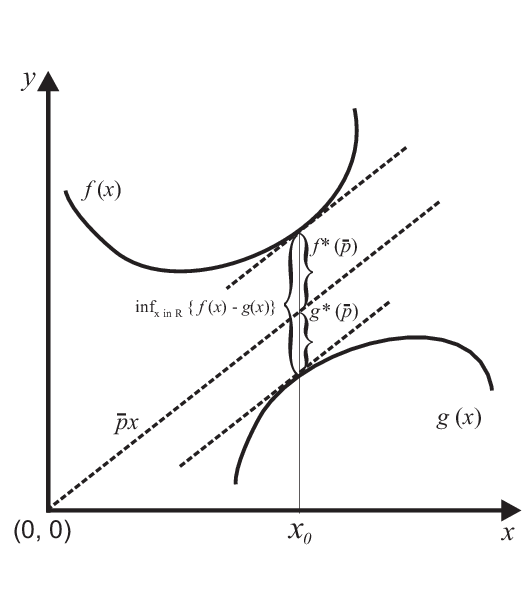}\\
\small{Figure 11}  \\
\end{center}

{\bf Proof:} By convexity and concavity properties we have that there exists $x_0$ such that
$$ \inf_{x \in \mathbb{R} } \, \{ \,f(x)\,-\,g(x)\,\}= f(x_0) - g(x_0).$$

It is also true that $f'(x_0)-g'(x_0)=0$. Denote by $\overline{p}$ that value $\overline{p}=f'(x_0)$.

We illustrate the proof via two pictures in a certain particular case.

Figure 11 shows a geometric picture of the position and values of $f(x_0) - g(x_0)$, $g^*(\overline{p})$ and $f^*(\overline{p}).$
Note that in this picture we have that $f(x_0) - g(x_0)>0.$ This picture also shows the graph of $\overline{p}\,x$ as a function of $x$.

We observe that the Legendre transform is not linear on the function.

Let's consider different values of $p$ and estimate $f^*(p)$ and $g^* (p).$ Suppose first $p\,>\,\overline{p}$.

In figure 12 we show the graph of $p\, x$, and the values of $f^*(p)$ and $g^* (p)$.

We denote by $x_2$ the value such that
$$ f^*(p)= \sup_{x \in \mathbb{R}} \{p\, x -\,f(x)\}\,=\,p\, x_2 - f(x_2).$$
Note that  $x_2\,>\,x_0$.

We denote by $x_1$ the value such that
$$ 0\,<\,g^*(p)= \sup_{x \in \mathbb{R}} \{p\, x -\,g(x)\}\,=\,p\, x_1 - g(x_1).$$
Note that  $x_1\,<\,x_0$.

\begin{center}
\includegraphics[scale=0.9,angle=0]{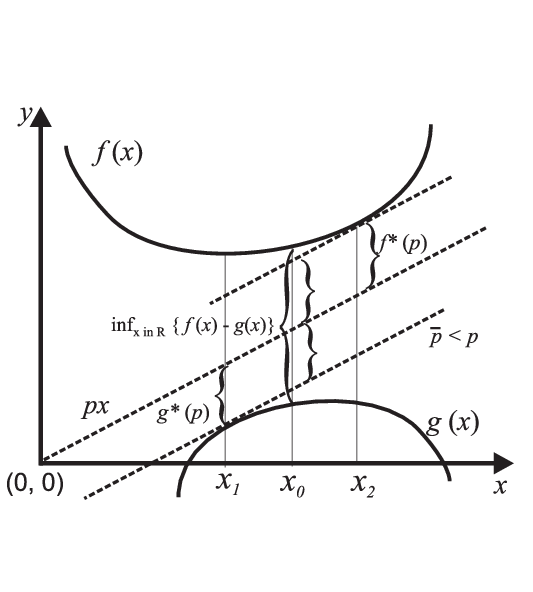}\\
\small{Figure 12}  \\
\end{center}

Note also that $f^*(p)$ and $g^* (p)$ have different signs.

From this picture one can see that $g^* (p) - f^* (p) < f(x_0)- g(x_0).$

In the case  $p\,<\,\overline{p}$ a similar reasoning can be done.

\qed

\end{document}